\documentclass[10pt,a4paper]{amsart}

\usepackage{amssymb, url}
\usepackage{amsthm}
\usepackage{amsmath}
\usepackage[nodayofweek]{datetime}

\usepackage{enumerate}
\usepackage[colorlinks=true]{hyperref} 
\hypersetup{urlcolor=blue,linkcolor=black,citecolor=black,colorlinks=true}
\usepackage[all]{xy}
\usepackage{verbatim}
\usepackage{soul, dsfont}

\usepackage{xcolor,url}

\theoremstyle{plain} 
\newtheorem{Theorem}{Theorem}[section]
\newtheorem{Lemma}[Theorem]{Lemma}
\newtheorem{Corollary}[Theorem]{Corollary}
\newtheorem{Proposition}[Theorem]{Proposition}
\newtheorem*{Proposition*}{Proposition}
\newtheorem*{Corollary*}{Corollary}

\newtheorem{maintheorem}{Theorem}

\theoremstyle{definition}
\newtheorem{Definition}[Theorem]{Definition}
\newtheorem{Example}[Theorem]{Example}

\newtheorem*{Question*}{Question}
\newtheorem{Hypothesis}[Theorem]{Hypothesis}

\theoremstyle{remark}
\newtheorem{Remark}[Theorem]{Remark}

\newcommand{\CC}{\mathbb{C}}

\newcommand{\FF}{\mathbb{F}}
\newcommand{\GG}{\mathbb{G}}

\newcommand{\ZZ}{\mathbb{Z}}  
\newcommand{\NN}{\mathbb{N}}  

\newcommand{\cA}{\mathcal{A}}

\newcommand{\cM}{\mathcal{M}}
\newcommand{\cK}{\mathcal{K}}   
\newcommand{\cO}{\mathcal{O}}   
\newcommand{\ck}{\kappa_\fm}        
\newcommand{\cR}{\mathcal{R}}

\newcommand{\fp}{\mathfrak{p}}  
\newcommand{\fa}{\mathfrak{a}}  
\newcommand{\fm}{\mathfrak{m}}  

\newcommand{\sfA}{\mathfrak{A}}

\newcommand{\fK}{\mathfrak{K}}

\newcommand{\bA}{\mathbf{A}}

\newcommand{\bK}{\mathbf{K}}

\newcommand{\bsm}{\boldsymbol{m}}

\newcommand{\bsz}{\boldsymbol{z}}

\newcommand{\rd}{\mathrm{d}}

\newcommand{\oK}{\overline{K}}

\newcommand{\rP}[1]{\mathrm{P}_{\!#1}}

\newcommand{\Exp}{\operatorname{Exp}}
\newcommand{\Spec}{\operatorname{Spec}}

\newcommand{\tr}{\operatorname{tr}}

\newcommand{\Hom}{\operatorname{Hom}}
\newcommand{\End}{\operatorname{End}}
\newcommand{\Aut}{\operatorname{Aut}}
\newcommand{\id}{\operatorname{id}}
\newcommand{\Id}{\operatorname{Id}}
\newcommand{\Ker}{\operatorname{ker}}

\newcommand{\Fq}{\FF_{\!q}}  
\newcommand{\Fpp}{\FF_{\!\fp}} 

\newcommand{\Gal}{\operatorname{Gal}}

\newcommand{\Hde}{\operatorname{Hde}}

\newcommand{\Span}{\operatorname{Span}}
\newcommand{\rk}{\operatorname{rank}}
\newcommand{\trdeg}{\operatorname{tr.deg}}
\newcommand{\one}{\mathds{1}}

\newcommand{\res}{\operatorname{res}}

\newcommand{\Mat}{\mathsf{Mat}} 
\newcommand{\GL}{\operatorname{GL}}
\newcommand{\SL}{\operatorname{SL}}

\newcommand{\Lie}{\operatorname{Lie}}

\newcommand{\Gm}{\textbf{G}_{m}}

\newcommand{\CI}{\CC_{\infty}}  

\newcommand{\ps}[1]{[\![#1]\!]}  
\newcommand{\ls}[1]{(\!(#1)\!)}
\newcommand{\cs}[1]{\langle #1 \rangle} 
\newcommand{\mot}{\mathsf{M}}  
\newcommand{\dmot}{\mathfrak{M}} 
\newcommand{\up}{u_\fp}  
\newcommand{\sep}{\mathrm{sep}}
\newcommand{\image}{\operatorname{Im}}
\newcommand{\hd}[2]{\partial^{(#1)}\!\!\left(#2\right)} 
\newcommand{\hde}[1]{\partial^{(#1)}}  
\newcommand{\hdt}[2]{\partial_t^{(#1)}\!\!\left(#2\right)} 
\newcommand{\hdte}[1]{\partial_t^{(#1)}}  
\newcommand{\hdthe}[1]{\partial_{\theta}^{(#1)}}  
\newcommand{\D}{\mathcal{D}} 
\newcommand{\hdiff}[2]{\operatorname{d}^{(#1)}\left(#2\right)} 
\newcommand{\hdiffe}[1]{\operatorname{d}^{(#1)}} 
\newcommand{\power}[2]{{#1 [\![ #2 ]\!]}} 

\newcommand{\Cent}{\operatorname{Cent}}

\newcommand{\prol}[1]{\mathrm{p}_{[#1]}}

\newcommand{\fracTate}{\mathbb{L}}  

\numberwithin{equation}{subsection}

\title{Algebraic independence of periods of Anderson modules and their hyperderivatives}

\author{Andreas Maurischat}
\address{Andreas Maurischat, RWTH Aachen University, Aachen, Germany}

\author{Changningphaabi Namoijam}
\address{Changningphaabi Namoijam, Department of Mathematics, Fairfield University, Connecticut, USA}

\thanks{}

 \newdate{date}{21}{09}{2026}
 \date{\displaydate{date}}

\begin{document}

\begin{abstract} 
Transcendence questions of periods and quasi-periods of Anderson modules, as well as the hyperderivatives thereof, are of major interest in number theory over function fields.
In the case of Drinfeld modules, this question was answered by the second author.
In this paper, we show that these results hold for general Anderson $t$-modules under certain assumptions on their Galois representations.
The proofs use the explicit
description of the $\fp$-adic Galois representation of the first author by means of a rigid analytic
trivialization of the associated $t$-motive, as well as prolongations of $t$-modules and $t$-motives. The
general result is applied to determine algebraic relations between periods and quasi-periods, and their
hyperderivatives, of finitely many Drinfeld modules.
Along the way, our results provide more evidence for the Mumford-Tate conjecture for Anderson modules.
\end{abstract}

\maketitle
    
{\footnotesize {\bf 2020 Mathematics Subject Classification.} {\it Primary} 11G09, 11J93; {\it Secondary} 11F80, 13N15.}

\setcounter{tocdepth}{1}
\tableofcontents

\section{Introduction}

\subsection{Motivation} 
Periods and quasi-periods of Drinfeld modules and their higher dimensional generalizations called Anderson $t$-modules play a central role in transcendental number theory over the rational function field $K = \Fq (\theta)$ in one variable over a finite field with $q$ elements. Questions about their algebraic independence are of major interest, and were first systematically investigated by Yu \cite{jy:ttoff, jy:tadm, jy:tadmsv, jy:opaqpodm, jy:taszvicp, jy:ahdm}.

For Drinfeld modules, the above question is very well understood, and the transcendence degree can be computed from the dimension of the endomorphism ring of the Drinfeld module (see \cite{cc-mp:arplr2dm,cc-mp:aipldm}). The proof uses the $t$-comotive\footnote{Traditionally, the comotive was called \emph{dual $t$-motive}, but more recent paper use the term comotive to avoid confusion with the categorical dual of the $t$-motive.} of the Drinfeld module, its motivic Galois group, and its rigid analytic trivialization. In more detail, if the Drinfeld module has rank $r$, the rigid analytic trivialization of its $t$-comotive is an $(r\times r)$-matrix $\Psi$ with coefficients in the Tate algebra $\CI\cs{t}$ over a complete algebraically closed extension $\CI$ of $K$. Papanikolas showed in \cite[Theorem 1.1.7]{mp:tdadmaicl} that the transcendence degree over $K(t)$ of the field $K(t,\Psi)$ generated by the entries of $\Psi$ equals the dimension of the motivic Galois group $\Gamma$, and furthermore that the transcendence degree stays the same when specializing $t$ to $\theta$, i.e.,~
\[ \trdeg(K(\Psi_{|_{t=\theta}}):K)=\trdeg(K(t,\Psi):K(t))= \dim(\Gamma).\]
Furthermore, the $K$-vector space generated by the entries of $\Psi_{|_{t=\theta}}$ is the same as the $K$-vector space generated by the periods and quasi-periods of the Drinfeld module (\cite[Prop.~3.4.7]{cc-mp:aipldm}).
Using this chain of equalities, Chang and Papanikolas had to determine the motivic Galois group $\Gamma$ which turned out to be isomorphic to $\Cent_{\GL_r(K)}(K_{\End})$, the centralizer in $\GL_r(K)$ of the field of fractions $K_{\End}$ of the endomorphism ring of the Drinfeld module. Hence, if the degree of the extension is $s:=[K_{\End}:K]$, then the dimension of the motivic Galois group $\Gamma$ (and hence the transcendence degree of the periods and quasi-periods) is 
\[ \dim(\Gamma)= \frac{r^2}{s}. \]

\medskip

The higher dimensional case of Anderson $t$-modules, however, is much more complex. 
The periods are now tuples, and the rigid analytic trivialization of the corresponding $t$-comotive only gives rise to some of their coordinates. These coordinates are, therefore, called \emph{tractable} coordinates (See \S~\ref{SS:RatPeriodsTractable}).
Furthermore, for Anderson $t$-modules, it is also much harder to determine the motivic Galois groups of the associated $t$-comotive.

The first author \cite{am:ptmsv} by-passed the first problem by using so-called prolongations of the $t$-module/$t$-comotive. The tractable coordinates of these prolongations contain all coordinates of periods of the original $t$-module (see also \cite[Theorem F]{cn-mp:hpqpam}). We will return to these prolongations later.

\medskip

For Anderson $t$-modules defined over a separable algebraic extension of $K$, the quasi-periods and the coordinates of periods are all contained in the separable algebraic closure $K_\infty^\sep$ of $K_\infty=\Fq\ls{\tfrac{1}{\theta}}$ (see e.g.~\cite[p.6, Remarque]{ld:dmdt} for Drinfeld modules, and \cite[Lemma 4.22]{cn-mp:hpqpam} in general), and the fields $K$, $K_\infty$, and $K_\infty^\sep$ have an extra structure: they are equipped with the family of hyperdifferential operators $\hdthe{n}$. \footnote{These hyperdifferential operators are the positive characteristic versions of the  differential operators $\frac{1}{n!}(\partial_\theta)^n$, where $\partial_\theta$ is the usual derivative by $\theta$. (See beginning of Section \ref{S:prolongation-of-group-schemes} for precise definitions.)}

The presence of the hyperdifferential operators 
raised the question of how the hyperderivatives of the periods and quasi-periods fit into the picture, and in particular what the algebraic relations between periods and quasi-periods, and their hyperderivatives, are. 
First partial results for Drinfeld modules were obtained by Brownawell and Denis (see \cite{ld:dmdt}, \cite{ld:iac2}, \cite{db:lidddm-I}, \cite{ld:iadpmc}, \cite{db-ld:lidddm-II}). It was solved for the Carlitz module by the first author \cite{am:aicph}, and for general Drinfeld modules by the second author \cite{cn:arhpldm}. 
However, the question remains open for general Anderson $t$-modules.

\medskip

One aim of our manuscript is to shed more light on this question. We provide a sufficient criterion for Anderson $t$-modules for which the result for Drinfeld modules carries over (see Theorem~\ref{mainthm:transcendence result}).
By applying this theorem to the direct sum of Drinfeld modules, we obtain results on the algebraic relations between periods and quasi-periods, and their hyperderivatives, for several Drinfeld modules together (see Theorem \ref{mainthm:severalDMs}).

The result in \cite{cn:arhpldm} for Drinfeld modules uses a connection between the hyperderivatives of the periods and quasi-periods, and the periods of the prolongation of the Drinfeld module. Such a connection was more recently described for arbitrary Anderson $t$-modules in \cite{cn-mp:hpqpam}, and we use this to trace back the transcendence questions including hyperderivatives to the transcendence questions solely for periods and quasi-periods of prolongations of Anderson $t$-modules.

As mentioned above, Papanikolas' theorem (cf.~\cite[Theorem 1.1.7]{mp:tdadmaicl}) relates these transcendence questions to the dimension of the motivic Galois group.
Therefore, in order to tackle transcendence questions about hyperderivatives of periods and quasi-periods, we need to examine the motivic Galois groups of prolongations of Anderson $t$-modules.

Throughout this paper, we will work with the $t$-motive  and not with the $t$-comotive, as it is better suited to our approach. The transition from the $t$-comotive side to the $t$-motive side, and how Papanikolas' theorem is phrased for $t$-motives, will be explained in Section \ref{Sub:RATsAndGaloisGroups}.

Anderson \cite{ga:tm} proved that an abelian $t$-module is rigid analytically trivial if and only if it is uniformizable. In this paper, we will only consider Anderson $t$-modules that are uniformizable and abelian, as Papanikolas' theorem requires these properties (see Section \ref{SubS:motive} and \ref{Sub:RATsAndGaloisGroups} for the definitions of these three terms).

\subsection{Bounds for motivic Galois groups}\label{Sub:BMGG}
Upper bounds on the motivic Galois groups are usually obtained by symmetries or special structures of the Anderson $t$-motive, whereas lower bounds can be obtained by analyzing the images of associated $\fp$-adic Galois representations for different non-zero prime ideals $\fp$ of $\bA=\Fq[t]$. In fact, the Mumford-Tate conjecture states that for a $t$-module $E$ (\emph{of generic characteristic}) which is defined over a finite extension $L$ of $K$, the $\fp$-adic Galois representation $\varrho_{E,\fp}$ attached to $E$ has a Zariski dense image in the motivic Galois group of $E$ for every prime $0 \ne \fp \in \bA$.

\medskip

For $n \geq 0$, the prolongation $\rho_{n}E$ of a $t$-module $E$ is a successive extension of $E$ with itself, and it turns out that a construction of an $n$-th prolongation of a scheme introduced by Rosen (see Section \ref{S:prolongation-of-group-schemes} for details) is the right object for a general upper bound for the Galois group $\Gamma_{\rho_n\mot}$ of the $t$-motive $\rho_n\mot$ associated to $\rho_{n}E$.

\begin{maintheorem} (see Theorem~\ref{thm:upper-bound-on-group-of-prolongation}) \label{mainthm:upper-bound}
    Let $E$ be an abelian $t$-module and $\mot=\mot_E$ be its corresponding $t$-motive and for $n \geq 0$, $\rho_n \mot$ be its $n$-th prolongation.
    Then the Galois group of the prolongation, $\Gamma_{\rho_n \mot}$, is contained in $\rP{n}(\Gamma_\mot)$, the $n$-th prolongation of the Galois group of $\mot$.
\end{maintheorem}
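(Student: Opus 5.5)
We will work throughout with Papanikolas' description of the Galois group through a rigid analytic trivialization, as recalled in Section~\ref{Sub:RATsAndGaloisGroups}. Fix a $K(t)$-basis of $\mot$ and a rigid analytic trivialization $\Psi$. Then $\Gamma_\mot$ is the smallest algebraic subgroup of $\GL_{r/K(t)}$ whose $\overline{K(t)}$-points contain the ``difference'' matrix $\tilde\Psi:=\Psi_1^{-1}\Psi_2 \in \GL_r(\oK(t)\otimes\ldots)$. More precisely, $\Gamma_\mot=\Spec$ of the image of $K(t)[X,1/\det X]\to \oK(t)\otimes_{\oK(t)}\oK(t)$, $X\mapsto\tilde\Psi$. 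Equivalently, $\Gamma_\mot$ is cut out by all polynomial relations over $K(t)$ satisfied by $\tilde\Psi$.

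The first step is to write down a rigid analytic trivialization of $\rho_n\mot$ explicitly. The $t$-motive $\rho_n\mot$ has a basis consisting of the basis of $\mot$ together with its formal hyperderivatives in $t$. Its $\tau$-action matrix is the block upper triangular matrix $\prol{n}(\Phi)$ with blocks $\hdt{i-j}{\Phi}$. Since the $\hdte{k}$ commute with $\tau$ (Frobenius twisting acts on coefficients only), the matrix $\prol{n}(\Psi)$, built from the blocks $\hdt{j-i}{\Psi}$, is a rigid analytic trivialization of $\rho_n\mot$. This uses only the Leibniz rule, which gives $\prol{n}(AB)=\prol{n}(A)\prol{n}(B)$ and hence $\prol{n}(\Psi)^{(-1)}=\prol{n}(\Phi)\prol{n}(\Psi)$. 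Consequently $\tilde\Psi_{\rho_n\mot}=\prol{n}(\Psi_1)^{-1}\prol{n}(\Psi_2)=\prol{n}(\tilde\Psi)$. For this we extend $\hdte{k}$ to the tensor product $\oK(t)\otimes\oK(t)$, acting via $t$ diagonally. This is legitimate because $\hdte{k}$ is $\oK$-linear on $\oK(t)$.

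The second step relates this to Rosen's prolongation. The $n$-th prolongation $\rP{n}(G)$ of a closed subscheme $G\subseteq\GL_r$ over $K(t)$, with respect to the iterative derivation $(\hdte{k})$, has the universal property that for any $K(t)$-algebra $R$ with a compatible iterative higher derivation, and any $g\in G(R)$, the matrix $\prol{n}(g)$ lies in $\rP{n}(G)(R)$. Concretely, $\rP{n}(G)\subseteq\GL_{r(n+1)}$ is defined by the ideal generated by the hyperderivatives $\hd{k}{f}$, $k\le n$, of the defining equations $f$ of $G$. These are rewritten via the Leibniz rule in terms of the block coordinates. Now apply this with $R=\oK(t)\otimes\oK(t)$ (or its relevant localization) and $g=\tilde\Psi\in\Gamma_\mot(R)$. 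Every $f$ in the ideal of $\Gamma_\mot$ satisfies $f(\tilde\Psi)=0$, hence $\hdte{k}(f(\tilde\Psi))=0$. Expanding by Leibniz and the chain rule shows that the corresponding generator of $\rP{n}(\Gamma_\mot)$ vanishes at $\prol{n}(\tilde\Psi)$. Here we use that the coefficients of $f$ lie in $K(t)$, and their hyperderivatives account for the terms $\hd{k}{f}$ in Rosen's construction. Thus $\prol{n}(\tilde\Psi)\in\rP{n}(\Gamma_\mot)(R)$, and minimality of $\Gamma_{\rho_n\mot}$ gives $\Gamma_{\rho_n\mot}\subseteq\rP{n}(\Gamma_\mot)$. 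We must also check that $\rP{n}(\Gamma_\mot)$ is a closed subgroup scheme, which follows from functoriality of prolongation for group schemes (Section~\ref{S:prolongation-of-group-schemes}).

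The main obstacle will be the compatibility in the second step. We must make sure that the iterative derivation $\hdte{\bullet}$ on $K(t)$ extends to the ring $\oK(t)\otimes_{\oK(t)}\oK(t)$ over which $\tilde\Psi$ lives, and more precisely to the field of fractions of the power series ring containing the entries of $\Psi$. We must also check that Rosen's prolongation is exactly the scheme whose points are these derivative matrices. I would first handle this on $\CI\cs{t}$, where $\hdte{k}$ is defined coefficientwise on the Laurent expansion in $t$, and then pass to fraction fields and tensor products using that hyperderivations extend uniquely to localizations. If Papanikolas' Tannakian description is used instead, the alternative is to show that $\Gamma_{\rho_n\mot}$ stabilizes the same tensor constructions that define $\rP{n}(\Gamma_\mot)$. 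I expect this to be messier, so I would try the explicit $\tilde\Psi$ route first.
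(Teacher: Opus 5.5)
Your proposal is correct and is essentially the paper's argument. The shared steps are: $\prol{n}$ of a rigid analytic trivialization is one for $\rho_n\mot$; since $\prol{n}$ is a ring homomorphism, it carries the ``difference matrix'' of $\mot$ to that of $\rho_n\mot$; it sends points of $\Gamma_\mot$ over $\fracTate\otimes\fracTate$ to points of $\rP{n}(\Gamma_\mot)$; and minimality (Lemma~\ref{lem:description-of-Gamma-Upsilon}) then yields $\Gamma_{\rho_n\mot}\subseteq\rP{n}(\Gamma_\mot)$.

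Only fix the notation: the Galois group is defined over $\bK=\Fq(t)$, not $K(t)$, and the difference matrix lives in $\fracTate\otimes_{L(t)}\fracTate$. Also, for the $t$-motive in this paper's convention you should use $\Upsilon\boxtimes\Upsilon^{-1}$ with $\Upsilon=\Upsilon^{(1)}\Theta$, rather than the comotive-style $\Psi_1^{-1}\Psi_2$ with $\Psi^{(-1)}=\Phi\Psi$.
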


For investigating the images of the $\fp$-adic Galois representations in the motivic Galois group, we start more generally with linear algebraic groups $G$ over some complete discretely valued field $\cK$, and subgroups $\Delta\subseteq G(\cK)$ of their $\cK$-rational points. Here, the field $\cK$ is equipped with an appropriate family of hyperdifferential operators.

Using a canonical homomorphism $\prol{n}:G(\cK)\to \rP{n}(G)(\cK)$ to the points of the prolongation group $\rP{n}(G)$, we show the following theorem, where $\cO$ denotes the valuation ring of $\cK$ and $\fm$ denotes its maximal ideal.

\begin{maintheorem}(see Theorem \ref{T:prolongated-subgroup-Zariski-dense})\\ \label{mainthm:Zariski-density}
    Let $G\subseteq \GL_{r,\cK}$, and $\Delta\subseteq G(\cO):=G(\cK)\cap \GL_r(\cO)$ be an $\fm$-adically open subgroup.
	Then for each $n\geq 0$, the group $\prol{n}\Delta:=\{ \prol{n}(X)\mid X\in\Delta \}$ is Zariski-dense in $\rP{n}(G)$.
\end{maintheorem}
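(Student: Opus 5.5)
The plan is to argue by induction on $n$ and compare the Zariski closure of $\prol{n}\Delta$ with $\rP{n}(G)$ layer by layer. Let $H_n\subseteq \rP{n}(G)$ denote the Zariski closure of $\prol{n}\Delta$. It is a closed subgroup, because the closure of a subgroup is a subgroup and $\prol{n}$ is a homomorphism. For $n=0$ we have $\rP{0}(G)=G$ and $\prol{0}=\id$, so the claim is that an $\fm$-adically open subgroup $\Delta\subseteq G(\cO)$ is Zariski dense in $G$. I would prove this by the standard argument: $\Delta$ contains an $\fm$-adic neighbourhood of $1$ in $G(\cK)$. Any polynomial vanishing on such a neighbourhood vanishes on the analytic germ of $G$ at $1$, hence on the identity component $G^\circ$. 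The translates of this neighbourhood by the finitely many cosets met by $\Delta$ then handle the other components, if $G(\cK)$ meets them at all. If $G$ is not smooth, one has to work with the reduced or smooth locus here; I would use the hypothesis that $G$ is an algebraic subgroup of $\GL_{r,\cK}$ as set up in the paper.

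For the induction step I would use the structure of $\rP{n}(G)$ from Section~\ref{S:prolongation-of-group-schemes}. There is a truncation homomorphism $\pi_n:\rP{n}(G)\to\rP{n-1}(G)$, compatible with the maps $\prol{}$, i.e.\ $\pi_n\circ\prol{n}=\prol{n-1}$. Its kernel $V_n$ should be a vector group isomorphic to $\Lie(G)$ (thought of as $\Lie(G)\otimes\varepsilon^n$ inside $G(R[\varepsilon]/\varepsilon^{n+1})$), and in particular connected. Then $\pi_n(H_n)$ is closed and contains $\prol{n-1}\Delta$, so it equals $\rP{n-1}(G)$ by the induction hypothesis. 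It therefore suffices to show $V_n\subseteq H_n$. Since $\rP{n}(G)$ acts on $V_n$ through the adjoint representation of $G$, $H_n\cap V_n$ is a closed subgroup of $V_n$ that is stable under $\mathrm{Ad}(\Delta)$, and hence, by the case $n=0$, under $\mathrm{Ad}(G)$.

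To produce enough elements of $H_n\cap V_n$, I would use elements of $\Delta$ very close to $1$. Fix a uniformizer $\pi$ adapted to the hyperdifferential operators, so that $\hde{i}(\pi^N)=\binom{N}{i}\pi^{N-i}$. For $Y\in\Lie(G)(\cO)$ and $c\in\cO$ of large valuation, $\Delta$ contains elements $X_c\equiv 1+cY$ modulo higher order in $c$. Then
\[
\prol{n}(X_c)\equiv 1+\textstyle\sum_{i=0}^n \hde{i}(c)\,Y\varepsilon^i .
\]
By choosing $c=a\pi^N$ with suitable $N$ and $a\in\cO$, the vectors $(\hde{i}(c))_{0\le i\le n}$ span $\cK^{n+1}$. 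Differentiating along such analytic one-parameter families inside $H_n(\cK)$ then shows that the tangent space of $H_n$ at $1$ contains every $Y\otimes\varepsilon^i$. Hence $\dim H_n=\dim\rP{n}(G)$. Combining this with surjectivity onto $\rP{n-1}(G)$ and the connectedness of $V_n$ gives $H_n\supseteq V_n$, and so $H_n=\rP{n}(G)$.

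The step I expect to be the main obstacle is making ``$\approx$'' rigorous in positive characteristic. There is no exponential map, and a closed subgroup of a vector group may be cut out by additive polynomials rather than linear ones, so tangent-space information alone is delicate; $\cK$ also need not be perfect, so $H_n$ need not be smooth. I would first try to avoid Lie algebras altogether. Commutators $[\prol{n}(X),\prol{n}(X')]$ with $X\in\Delta\cap\ker(G(\cO)\to G(\cO/\fm^N))$ have a controlled leading term in $V_n$. Together with $\mathrm{Ad}(G)$-stability, this should show that the $\cK$-points of $H_n\cap V_n$ contain a $\cK$-spanning, $\fm$-adically open set of $V_n(\cK)$. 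An open subset of $V_n(\cK)$ is Zariski dense in $V_n$, which would finish the induction.
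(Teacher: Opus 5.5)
\textbf{Gap in the induction step.} The gap is where you expect it, and the commutator fallback cannot close it. Since $\prol{n}$ is a homomorphism, $[\prol{n}(X),\prol{n}(X')]=\prol{n}([X,X'])$ again lies in $\prol{n}\Delta$. Moreover $\prol{n}\Delta\cap V_n(\cK)=\{\one\}$, because the diagonal blocks of $\prol{n}(X)$ are $X$ itself. Passing to limits does not help either: $\prol{n}\Delta$ is $\fm$-adically closed, being the graph of the continuous maps $\hde{1},\dots,\hde{n}$ over the closed subgroup $\Delta$. So elements of $\prol{n}\Delta$ only ever carry congruence information, such as ``leading term in $V_n$ modulo $\fm^{\ell+1}$''. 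Nontrivial points of $H_n\cap V_n$ arise only through the Zariski closure.

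What your proposal is missing is a device that turns congruence information into a lower bound for $\dim H_n$. In the paper this device is the congruence filtration. For large $j$, the layer $H(\fm^j)/H(\fm^{j+1})$ of the Zariski closure $H$ embeds into $\Lie(H)(\cO)\otimes_{\cO}\ck$, so its $\ck$-dimension is at most $\dim H$. For the smooth group $\rP{n}(G)$, the corresponding layer has dimension exactly $\dim\rP{n}(G)$ (Lemmas~\ref{lem:lifting-of-homomorphisms}, \ref{lem:isomorphism-to-Lie-algebra}, \ref{lem:Zariski-dense}). Hence it suffices that $\prol{n}(\Delta)$ fills one layer $\rP{n}(G)(\fm^\ell)/\rP{n}(G)(\fm^{\ell+1})$, which is Proposition~\ref{prop:rho-bar}. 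If $\ell$ is divisible by some $p^e>n$, then $\hd{j}{\varpi^\ell}=0$ for $1\leq j\leq n$. So $X=\one+\varpi^{\ell+n}Y\in G(\fm^{\ell+n})$ gives $\prol{n}(X)\equiv\one+\varpi^{\ell}\hd{1}{\varpi}^{n}Y\,T^n \bmod \fm^{\ell+1}$. These are exactly your elements with leading term in $V_n$, and no commutators are needed. They fill the $V_n$-part of the layer, and the rest of the layer follows by induction on $n$ via $\pi_{n,n-1}$.

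\textbf{Repairing the tangent-space route.} Your first route can be made into a valid Lie-algebra version of this count, but not as written. The map $c\mapsto\prol{n}(X_c)$ is not $\cK$-analytic, because $\hde{i}$ is not $\cK$-linear, so there is nothing to differentiate. You have to restrict the parameter to $s\in\cK^{p^e}$ with $p^e>n$, on which $\hde{1},\dots,\hde{n}$ vanish. Take an analytic curve $c'\mapsto X_{c'}$ in $G(\cK)$ with tangent $Y$ at $\one$, which exists because $G$ is smooth. Then $\prol{n}(X_{sc})=\one+s\,\prol{n}(cY)+O(s^2)$ is a power series in $s$, and the identity theorem puts $\prol{n}(cY)$ into the tangent space of $H_n$ at $\one$. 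Note that this vector is $\prol{n}(cY)$, not $\sum_i\hd{i}{c}Y\,T^i$; you dropped the terms $\hd{i}{Y}$. The $\cK$-span of these vectors, as $c$ and $Y$ vary, is all of $\Lie(\rP{n}(G))$.

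Your worry that $H_n$ might not be smooth is unfounded. The Zariski closure of a set of $\cK$-rational points is geometrically reduced: its ideal is the kernel of a family of $\cK$-linear evaluation maps, and this commutes with extension of scalars. So $H_n$ is smooth and $\dim H_n=\dim\Lie(H_n)$; the paper tacitly needs the same fact in Lemma~\ref{lem:Zariski-dense}.

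Finally, your adapted uniformizer with $\hd{i}{\varpi^N}=\binom{N}{i}\varpi^{N-i}$ is an extra assumption. The paper's argument only uses $\hd{1}{\varpi}\in\cO^\times$. Some such non-degeneracy is unavoidable: for the trivial higher derivation, $\prol{n}\Delta$ lies in the block-diagonal copy of $G$ and cannot be dense.
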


Applying this theorem and the previous theorem to the setting of Anderson $t$-modules, we obtain a situation where the upper bound in the proposition is sharp, leading to the following theorem. 
Let $L$ be a field such that $K\subseteq L \subseteq \CC_{\infty}$.

\begin{maintheorem} (see Theorem \ref{T:galois-group-of-prolongation})	\\ \label{mainthm:Galois group of prolongation}
Let $(E,\phi)$ be a uniformizable abelian $t$-module over $L$ of rank $r$ 
and let $\mot$ be its associated $t$-motive. Further, let $\Gamma_\mot\subseteq \GL_{r,\bK}$ be the motivic Galois group of $\mot$ which is defined over $\bK=\Fq(t)$, $\Gamma_{\mot,0}$ the connected component of $1$ in $\Gamma_\mot$, and let
	$\fp$ be a prime of $\bA=\Fq[t]$. Further, let $\varrho_\fp:\Gal(L^\mathrm{sep}/L)\to \GL_r(\bA_\fp)$, the $\fp$-adic Galois representation associated to $E$.
	Assume that the $\image(\varrho_\fp)\cap \Gamma_{\mot,0}(\bK_\fp)$ is $\fp$-adically open in $\Gamma_{\mot,0}(\bK_\fp)$. 
	Then for any $n\geq 0$, the Galois group $\Gamma_{\rho_{n}\mot}$ of the $n$-th prolongation $\rho_{n}\mot$ of $\mot$ equals the $n$-th prolongation $\rP{n}(\Gamma_\mot)$ of the Galois group $\Gamma_{\mot}$ of $\mot$.
	In particular, $\dim(\Gamma_{\rho_{n}\mot})=(n+1)\cdot \dim(\Gamma_\mot)$.
\end{maintheorem}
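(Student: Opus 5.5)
The argument is a squeeze between two bounds. Theorem~\ref{mainthm:upper-bound} already gives the upper bound $\Gamma_{\rho_n\mot}\subseteq \rP{n}(\Gamma_\mot)$. So it suffices to prove the reverse containment, or at least $\Gamma_{\rho_n\mot}\supseteq \rP{n}(\Gamma_{\mot,0})$, and then handle the component group separately. The lower bound will come from the $\fp$-adic Galois representation of the prolongation $\rho_n E$, combined with the Zariski density result Theorem~\ref{mainthm:Zariski-density}.

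\emph{Step 1: the Galois representation of the prolongation.} Using the first author's description of $\varrho_\fp$ via a rigid analytic trivialization $\Upsilon$ of $\mot$, I would check the following. A rigid analytic trivialization of $\rho_n\mot$ is obtained by applying the hyperdifferential prolongation to $\Upsilon$, giving a block upper-triangular matrix built from the $\hdte{k}\Upsilon$. Consequently, after a suitable choice of bases,
\[ \varrho_{\rho_n E,\fp}(\sigma)=\prol{n}\bigl(\varrho_{E,\fp}(\sigma)\bigr) \quad\text{for all } \sigma\in\Gal(L^\sep/L). \]
Here $\bK_\fp$ carries the hyperdifferential operators with respect to $t$, which is the role of $\cK$ in Theorem~\ref{mainthm:Zariski-density}.

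\emph{Step 2: the image lies in the motivic group.} The image of $\varrho_{\rho_n E,\fp}$ lies in $\Gamma_{\rho_n\mot}(\bK_\fp)$. This is the standard fact that the $\fp$-adic representation factors through the motivic Galois group: the Tannakian construction is compatible with the $\fp$-adic fiber functor, and this is presumably recorded in Section~\ref{Sub:RATsAndGaloisGroups}. I would then set $\Delta:=\image(\varrho_\fp)\cap\Gamma_{\mot,0}(\bK_\fp)$. By hypothesis $\Delta$ is $\fp$-adically open in $\Gamma_{\mot,0}(\bK_\fp)$, and it is contained in $\GL_r(\bA_\fp)$. Applying Theorem~\ref{mainthm:Zariski-density} with $G=\Gamma_{\mot,0}$ and $\cK=\bK_\fp$ shows that $\prol{n}\Delta$ is Zariski dense in $\rP{n}(\Gamma_{\mot,0})$. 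By Steps 1 and 2, $\prol{n}\Delta\subseteq\Gamma_{\rho_n\mot}(\bK_\fp)$, so $\rP{n}(\Gamma_{\mot,0})\subseteq \Gamma_{\rho_n\mot}$ after base change to $\bK_\fp$, and hence over $\bK$ as both are defined over $\bK$.

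\emph{Step 3: components and dimension.} Since $\rP{n}$ preserves connectedness and $\rP{n}(\Gamma_{\mot,0})$ is the identity component of $\rP{n}(\Gamma_\mot)$, the containments above give equality of identity components. For the remaining components, I would use that $\rho_n\mot$ has $\mot$ as a quotient, so $\Gamma_{\rho_n\mot}$ surjects onto $\Gamma_\mot$. The kernel of the projection $\rP{n}(\Gamma_\mot)\to\Gamma_\mot$ is connected, being unipotent. A subgroup that contains the identity component and surjects onto $\Gamma_\mot$ is then everything. The dimension statement $\dim\rP{n}(G)=(n+1)\dim G$ should be one of the general properties of Rosen's prolongation from Section~\ref{S:prolongation-of-group-schemes}.

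I expect Step 1 to be the main obstacle. It requires making the identification of $\varrho_{\rho_nE,\fp}$ with $\prol{n}\circ\varrho_{E,\fp}$ precise, and this must be done with the \emph{same} hyperdifferential structure that Theorem~\ref{mainthm:Zariski-density} uses on $\bK_\fp$. My plan is to differentiate the relation $\Upsilon^{(1)}=\Phi\Upsilon$ termwise and use that the Galois action commutes with $\hdte{k}$.
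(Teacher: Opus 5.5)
Your proposal is correct and follows essentially the same route as the paper. It combines the upper bound from Theorem~\ref{thm:upper-bound-on-group-of-prolongation}, the identity $\varrho_{\rho_nE,\fp}=\prol{n}\circ\varrho_{E,\fp}$ (obtained from the rigid analytic trivialization $\prol{n}(\Upsilon)$ by checking that $\prol{n}$ commutes with $\D_\fp$ and with the Galois action), Zariski density from Theorem~\ref{T:prolongated-subgroup-Zariski-dense} applied to $\Gamma_{\mot,0}$, and the containment of the image in $\Gamma_{\rho_n\mot}$ via Theorem~\ref{thm:Galois-representation}.

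One small slip is in Step~3: in characteristic $p$, unipotent groups need not be connected. The connectedness of $\ker\bigl(\rP{n}(\Gamma_\mot)\to\Gamma_\mot\bigr)$ should instead come from its being an iterated extension of the vector groups $\Lie(\Gamma_\mot)$ (Proposition~\ref{prop:ses-and-dim-of-prolongations}, Lemma~\ref{lem:connected-component}). The paper sidesteps this, and also the need to identify the Tannakian surjection $\Gamma_{\rho_n\mot}\to\Gamma_\mot$ with the block projection, by simply comparing numbers of connected components.
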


The approach that we pursue also gives more examples where the Mumford-Tate conjecture holds.

\begin{maintheorem} (see Theorem \ref{thm:evidence-Mumford-Tate-conjecture})
\label{mainthm:Mumford-Tate-conjecture}
       Let $(E, \phi)$ be a uniformizable abelian $t$-module over a finite extension $L$ of $K=\Fq(\theta)$ and let $\mot$ be its associated $t$-motive. Assume that for every prime $\fp\subseteq \Fq[t]$, the image of the $\fp$-adic Galois representation $\varrho_{E,\fp}$ is $\fp$-adically open in $\Gamma_\mot(\bK_\fp)$.
    Then for all $n\geq 0$, the Mumford-Tate conjecture holds for the $n$-th prolongation $\rho_n E$ of $E$.
\end{maintheorem}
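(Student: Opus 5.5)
The plan is to deduce the statement from Theorem~\ref{mainthm:Galois group of prolongation} together with an explicit description of the $\fp$-adic Galois representation of the prolongation $\rho_nE$. Fix $n\geq 0$ and a prime $\fp$. The Mumford--Tate conjecture for $\rho_nE$ asks that $\image(\varrho_{\rho_nE,\fp})$ be Zariski dense in $\Gamma_{\rho_n\mot}$. The hypothesis is that $\image(\varrho_{E,\fp})$ is $\fp$-adically open in $\Gamma_\mot(\bK_\fp)$. Intersecting with the open subgroup $\Gamma_{\mot,0}(\bK_\fp)$ gives an open subgroup of $\Gamma_{\mot,0}(\bK_\fp)$, so the hypothesis of Theorem~\ref{mainthm:Galois group of prolongation} holds. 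Hence $\Gamma_{\rho_n\mot}=\rP{n}(\Gamma_\mot)$.

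The key step is to identify the representation of $\rho_nE$. I will use the explicit description of the $\fp$-adic representation through a rigid analytic trivialization. The rigid analytic trivialization of $\rho_n\mot$ is the block matrix of hyperderivatives $\hdte{k}$ of the trivialization $\Psi$ of $\mot$. Since the Galois action commutes with the hyperdifferential operators on $\bK_\fp$ (extended to the relevant completions), this should give
\[
\varrho_{\rho_nE,\fp}(\sigma)=\prol{n}\bigl(\varrho_{E,\fp}(\sigma)\bigr)\quad\text{for all }\sigma\in\Gal(L^\sep/L),
\]
up to a choice of basis. Here $\prol{n}$ is the canonical homomorphism into $\rP{n}(\GL_r)(\bK_\fp)$, computed using the hyperdifferential structure on $\bK_\fp=\Fq\ls{\text{uniformizer}}$ with respect to $t$. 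Equivalently, one can argue on torsion: the $\fp^m$-torsion of $\rho_nE$ is built from that of $E$ together with its $t$-hyperderivatives. Verifying this compatibility carefully is where I expect the main obstacle. It requires matching the paper's construction of $\rho_n\mot$ with Rosen's prolongation and checking that the $\fp$-adic completion is equipped with the right hyperdifferential operators.

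Given that identity, $\image(\varrho_{\rho_nE,\fp})=\prol{n}\Delta$ with $\Delta=\image(\varrho_{E,\fp})$. To apply Theorem~\ref{mainthm:Zariski-density} with $\cK=\bK_\fp$, $\cO=\bA_\fp$ and $G=\Gamma_\mot$, I need $\Delta\subseteq G(\cO)$ to be $\fm$-adically open. This holds because $\Delta$ is compact, so it lies in $\GL_r(\bA_\fp)$, and it is open by hypothesis. Theorem~\ref{mainthm:Zariski-density} then says that $\prol{n}\Delta$ is Zariski dense in $\rP{n}(\Gamma_\mot)=\Gamma_{\rho_n\mot}$. This is exactly the Mumford--Tate conjecture for $\rho_nE$ at $\fp$. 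Since $\fp$ was arbitrary, this completes the argument.

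One subtlety concerns connected components. If Theorem~\ref{mainthm:Zariski-density} requires $G$ to be connected, I would apply it to $\Gamma_{\mot,0}$ and $\Delta\cap\Gamma_{\mot,0}(\bK_\fp)$, which gives density in $\rP{n}(\Gamma_{\mot,0})$. I would then use that the finitely many cosets of $\Delta$ meet all components of $\Gamma_\mot$, and that $\rP{n}$ preserves the component group, to get density in all of $\rP{n}(\Gamma_\mot)$.
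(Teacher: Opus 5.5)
Your proposal is correct and follows the paper's own proof. Theorem~\ref{mainthm:Galois group of prolongation} gives $\Gamma_{\rho_n\mot}=\rP{n}(\Gamma_\mot)$, and the identity $\varrho_{\rho_nE,\fp}=\prol{n}\circ\varrho_{E,\fp}$ is exactly Lemma~\ref{lem:repr-of-prol}, which the paper proves by checking, via density and continuity, that $\prol{n}$ commutes with both the Taylor expansion $\D_\fp$ and the action of $\gamma$ in the formula $\varrho_{E,\fp}(\gamma)=\D_\fp(\Upsilon)\cdot\gamma(\D_\fp(\Upsilon))^{-1}$. Applying Theorem~\ref{mainthm:Zariski-density} to $G=\Gamma_\mot$ (packaged as Lemma~\ref{lem:dense-prolongation-of-representation}) then finishes, and that theorem already handles non-connected $G$ just as your last paragraph does.

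One small correction: $\Delta\subseteq\GL_r(\bA_\fp)$ holds because $\varrho_{E,\fp}$ takes values there by construction (cf.\ Theorem~\ref{thm:Galois-representation}), not because $\Delta$ is compact. Compactness alone would only place $\Delta$ in some conjugate of $\GL_r(\bA_\fp)$.
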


As mentioned above, the dimension of the motivic Galois group of $n$-th prolongation equals the transcendence degrees of the tractable coordinates of periods and quasi-periods, and their hyperderivatives up to order $n$. 
Hence, from Theorem \ref{mainthm:Galois group of prolongation}, we directly obtain the following theorem.

\begin{maintheorem}\label{mainthm:transcendence result}(see Theorem~\ref{Thm:transcendence-degrees})
Let $(E,\phi)$ be a uniformizable abelian $t$-module over a separable algebraic extension $L$ of $K=\Fq(\theta)$ of rank $r$ and let $\mot$ be its associated $t$-motive. Further, let $\Gamma_\mot\subseteq \GL_{r,\bK}$ be the motivic Galois group of $\mot$ which is defined over $\bK=\Fq(t)$, and let
	$\fp$ be a prime of $\bA=\Fq[t]$. Further, let $\varrho_\fp:\Gal(L^\mathrm{sep}/L)\to \GL_r(\bA_\fp)$ the $\fp$-adic Galois representation associated to $E$.
	Assume that the image of $\varrho_\fp$ is $\fp$-adically open in $\Gamma_\mot(\bK_\fp)$. Let $\mathcal{G}_n$ denote the field generated over $L$ by the quasi-periods and the tractable coordinates of periods of $E$, and their hyperderivatives upto $n$. Then, 
    \begin{equation}
        \trdeg(\mathcal{G}_n:L) = (n+1)\cdot \trdeg(\mathcal{G}_0:L). 
          \end{equation} 
\end{maintheorem}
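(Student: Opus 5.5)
Theorem~\ref{mainthm:transcendence result} is essentially Theorem~\ref{mainthm:Galois group of prolongation} combined with Papanikolas' theorem, applied to the prolongations. The only subtlety is that the hypothesis here is openness in $\Gamma_\mot(\bK_\fp)$, whereas Theorem~\ref{mainthm:Galois group of prolongation} asks for openness of $\image(\varrho_\fp)\cap\Gamma_{\mot,0}(\bK_\fp)$ in $\Gamma_{\mot,0}(\bK_\fp)$.

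\emph{Step 1: reduce the hypothesis.} Since $\Gamma_{\mot,0}$ is a finite-index open subgroup of $\Gamma_\mot$, the group $\Gamma_{\mot,0}(\bK_\fp)$ is $\fp$-adically open in $\Gamma_\mot(\bK_\fp)$. Intersecting the open subgroup $\image(\varrho_\fp)$ of $\Gamma_\mot(\bK_\fp)$ with it gives an open subgroup of $\Gamma_{\mot,0}(\bK_\fp)$. Here I use that the image is compact, lies in $\Gamma_\mot(\bK_\fp)$, and that openness is inherited by intersection. Theorem~\ref{mainthm:Galois group of prolongation} then yields
\[\dim\Gamma_{\rho_n\mot}=(n+1)\dim\Gamma_\mot \quad\text{for all } n\geq 0.\]

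\emph{Step 2: apply Papanikolas' theorem to each prolongation.} The prolongation $\rho_nE$ is again a uniformizable abelian $t$-module over $L$; I would take this from the cited prolongation literature or from the paper's section on prolongations. Papanikolas' theorem in its $t$-motive form (Section~\ref{Sub:RATsAndGaloisGroups}) then says that $\dim\Gamma_{\rho_n\mot}$ equals the transcendence degree over $L$ of the field generated by the entries of the rigid analytic trivialization specialized at $t=\theta$. Strictly, this is over $\bar K$, which requires $L$ algebraic over $K$; that holds since $L$ is separable algebraic.

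\emph{Step 3: identify that field with $\mathcal{G}_n$.} By the cited results \cite[Theorem F]{cn-mp:hpqpam} and \cite[Prop.~3.4.7]{cc-mp:aipldm}, together with their generalizations in \S\ref{SS:RatPeriodsTractable}, the $L$-span of the specialized entries equals the $L$-span of the quasi-periods and tractable coordinates of periods of $\rho_nE$. These in turn coincide, as an $L$-span, with the quasi-periods and tractable coordinates of $E$ together with their hyperderivatives up to order $n$. Hence this field is $\mathcal{G}_n$, and
\[\trdeg(\mathcal{G}_n:L)=\dim\Gamma_{\rho_n\mot}=(n+1)\dim\Gamma_\mot=(n+1)\trdeg(\mathcal{G}_0:L),\]
where the case $n=0$ gives the last equality.

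\emph{Main obstacle.} I expect Step 3 to be the delicate point. It requires checking that the tractable coordinates and quasi-periods of $\rho_nE$ produce exactly the hyperderivatives of \emph{tractable} coordinates of $E$, not of all coordinates. I would first try to settle this through the explicit block upper-triangular shape of the rigid analytic trivialization of $\rho_n\mot$, whose blocks are the hyperderivatives $\partial_t^{(i)}\Psi$. I would then use the relation $\partial_\theta^{(i)}(f|_{t=\theta})$ versus $(\partial_t^{(i)}f)|_{t=\theta}$ from \cite{cn-mp:hpqpam} to see that the two spans agree over $L$. That comparison uses that $L$ is stable under $\partial_\theta^{(i)}$, which holds because $L$ is separable over $K$.
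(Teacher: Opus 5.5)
Your proposal is correct and is essentially the paper's proof. Openness in $\Gamma_\mot(\bK_\fp)$ gives openness of the intersection with $\Gamma_{\mot,0}(\bK_\fp)$, as in the proof of Theorem~\ref{thm:evidence-Mumford-Tate-conjecture}; Theorem~\ref{T:galois-group-of-prolongation} gives $\dim\Gamma_{\rho_n\mot}=(n+1)\dim\Gamma_\mot$; Papanikolas' Theorem~\ref{T:Pap}, applied to the trivialization $\prol{n}(\Upsilon)$ of Theorem~\ref{T:RATpro}, turns this into transcendence degrees; and Theorem~\ref{T:hyperRATnPer} identifies $L(\prol{n}(\Upsilon^{(1)})|_{t=\theta})$ with $\mathcal{G}_n$. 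Your detour through the quasi-periods and tractable coordinates of $\rho_nE$, and the obstacle you anticipate, are unnecessary, because Theorem~\ref{T:hyperRATnPer} applied directly to $E$ already gives this identification, and the paper simply cites it.
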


We have to restrict to separable algebraic extensions $L$ of $K$ here, as we use Papanikolas' theorem (which requires \emph{algebraic}) and hyperderivatives with respect to $\theta$ (which requires \emph{separable}).
We mention that if $n\geq 0$ is chosen so that $(\rd\phi(t)-\theta \Id_d)^n=0$, then all coordinates of periods are contained in $\mathcal{G}_n$ (see Theorem~\ref{T:hyperRATnPer}).  
Thus, we obtain transcendence degrees of all coordinates of periods (see Theorem \ref{Thm:periodentries}).

\subsection{Periods of several Drinfeld modules}

When $(E, \phi)$ is a Drinfeld module, there are no non-tractable coordinates and the transcendence question for its periods and quasi-periods, and their hyperderivatives, was solved by the second author in \cite{cn:arhpldm}.

When considering several Drinfeld modules, there are only a few results about the algebraic relations among their periods and quasi-periods, and no results including their hyperderivatives.
Chang \cite{cc:svdmfai} considered the case of several Drinfeld modules of rank $r=2$ which are of CM type, i.e., where the field of fractions of the endomorphism rings have degree $r$ over $K=\Fq(\theta)$. Very recently, Chen and Gezmiş \cite{yc-og:svmdmfarcmp} extended this result to CM Drinfeld modules of arbitrary ranks.

\begin{Theorem} (\cite[Theorem 1.3.1]{yc-og:svmdmfarcmp})\\
Let $E_1,\ldots, E_m$ be CM Drinfeld modules of ranks $r_1,\ldots, r_m\geq 2$ respectively defined over $\oK$. Suppose that for all $1\leq i\leq m$, the field of fractions $K_i$ of the endomorphism ring of $E_i$ is Galois over $K=\Fq(\theta)$, and that $K_i\cap K_j=\Fq(\theta)$ for any $i \ne j$. Let $\mathcal{F}$ be the field generated by the periods and quasi-periods of all $E_i$. Then we have
\[ \trdeg(\mathcal{F}:K) = 1+\sum_{i=1}^m (r_i-1)= (r_1 + \dots + r_m ) - (m - 1). \]
\end{Theorem}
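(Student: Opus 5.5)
This result is quoted from \cite[Theorem 1.3.1]{yc-og:svmdmfarcmp}, but I would approach it through Papanikolas' theorem. The periods and quasi-periods of all $E_i$ span, over $\oK$, the same space as the entries of $\Psi|_{t=\theta}$, where $\Psi$ is a rigid analytic trivialization of the comotive of $E=E_1\oplus\dots\oplus E_m$. Hence
\[
\trdeg(\mathcal{F}:K)=\dim\Gamma_\mot ,
\]
where $\mot=\mot_1\oplus\dots\oplus\mot_m$. The task is therefore to compute the motivic Galois group of a direct sum of CM Drinfeld motives.

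For a single CM Drinfeld module $E_i$ of rank $r_i$, Chang--Papanikolas give
\[
\Gamma_{\mot_i}\cong \Cent_{\GL_{r_i}}(K_i)=\operatorname{Res}_{K_i/K}\Gm ,
\]
a torus of dimension $r_i$; here I identify $t$ with $\theta$. The group $\Gamma_\mot$ sits inside $\prod_i \Gamma_{\mot_i}$ and surjects onto each factor. Every $\mot_i$ has the same determinant motive, the Carlitz motive up to twist. So $\Gamma_\mot$ lies in the subtorus $T$ of tuples $(x_i)$ with $N_{K_i/K}(x_i)$ independent of $i$. This is the fibre product of the $\operatorname{Res}_{K_i/K}\Gm$ over $\Gm$ via the norms, and it has dimension $\sum_i r_i-(m-1)$. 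That gives the upper bound.

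For the lower bound, pass to character groups. $\Gamma_\mot$ is a subtorus of $\prod_i\operatorname{Res}_{K_i/K}\Gm$, so it is cut out by a $\Gal$-stable sublattice $\Lambda\subseteq\bigoplus_i \ZZ[\Gal(K_i/K)]$ of relations. I expect this step to be the main obstacle, and I would handle it as follows. The CM types force any relation to be compatible with the Galois actions, which is where the hypothesis that each $K_i/K$ is Galois is used. Because $K_i\cap K_j=K$, the Galois closures are linearly disjoint. Hence the Galois module $\bigoplus_i\ZZ[G_i]$, viewed over $\Gal(\tilde K/K)$ with $\tilde K$ the compositum, has the trivial representation as its only common constituent between different $i$.

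A relation in $\Lambda$ that is not already a relation inside a single $\Gamma_{\mot_i}$ must therefore lie in the isotypic component of the trivial character, i.e.\ it must come from norms. The single factors impose no relations, since each $\Gamma_{\mot_i}$ is the full torus. So $\Lambda$ is contained in the span of the differences $N_i-N_j$, which has rank $m-1$.

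To make this rigorous, I would realize the characters via the $\fp$-adic Galois representations: CM theory gives abelian images given by Hecke characters. Alternatively, I would work with the period matrices directly and show that a nontrivial non-norm relation would produce a morphism between sub-motives of $\mot_i$ and $\mot_j$ defined over the common field. That would contradict $K_i\cap K_j=K$.

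Combining the two bounds gives $\dim\Gamma_\mot=\sum_i r_i-(m-1)$, which is the claimed transcendence degree.
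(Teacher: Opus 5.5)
Your argument is correct. The paper gives no proof of this statement; it is quoted from \cite{yc-og:svmdmfarcmp} as background. The relevant comparison is therefore with the paper's own result for several Drinfeld modules, Theorem~\ref{T:severalDMs}.

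Your upper bound is exactly the one used there. Over $\oK$ one has $\wedge^{r_i}\mot_{E_i}\cong\mot_C$, which forces $\Gamma_{\mot_E}$ into the fibre product of the $\Gamma_{\mot_{E_i}}$ over $\Gamma_{\mot_C}\cong\Gm$. In the CM case this fibre product is your norm torus $T$.

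The lower bounds differ. The paper argues arithmetically. Under Hypothesis~\ref{hypo:severalDMs} (linear disjointness of the $\fp$-power torsion fields over $L(C[\fp^\infty])$) and Pink's open image theorem \cite{rp:mtcdm}, the image of $\varrho_{E,\fp}$ is $\fp$-adically open in the fibre product, hence Zariski dense. You argue purely algebraically with character lattices. This works only because each $\Gamma_{\mot_{E_i}}=\operatorname{Res}_{K_i/\bK}\Gm$ is a torus with permutation character module $\ZZ[G_i]$. In exchange, the hard-to-check torsion-field hypothesis is replaced by the elementary condition $K_i\cap K_j=K$.

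What your route does not give is openness of the $\fp$-adic image in $\Gamma_{\mot_E}(\bK_\fp)$. In characteristic $p$, Zariski density of the image is strictly weaker than openness. Openness is exactly what the paper needs to apply Theorem~\ref{T:galois-group-of-prolongation} and obtain the statements with hyperderivatives.

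The step you flagged as the main obstacle is already rigorous, but the justifications need adjusting:
\begin{itemize}
\item The Galois-stability of $\Lambda$ has nothing to do with CM types. It holds simply because $\Gamma_{\mot_E}$ is defined over $\bK$.
\item Surjectivity of $\Gamma_{\mot_E}$ onto the full torus $\Gamma_{\mot_{E_i}}$ gives $\Lambda\cap\ZZ[G_i]=0$.
\item Isotypic projectors are central idempotents of $\mathbb{Q}[\Gal(\tilde K/K)]$, so $\Lambda\otimes\mathbb{Q}$ is the sum of its intersections with the isotypic components.
\item A nontrivial constituent common to $\mathbb{Q}[G_i]$ and $\mathbb{Q}[G_j]$ would be trivial on $\Gal(\tilde K/K_i)\Gal(\tilde K/K_j)=\Gal(\tilde K/K_i\cap K_j)$, hence trivial. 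So pairwise $K_i\cap K_j=K$ suffices; joint linear disjointness, which can fail for $m\geq 3$, is not needed.
\item Hence $\Lambda\otimes\mathbb{Q}\subseteq\bigoplus_i\mathbb{Q}N_i$. Then $\Lambda\cap\ZZ N_1=0$ gives $\rk\Lambda\le m-1$. This dimension count, not an a priori containment in the span of the $N_i-N_j$, is the precise conclusion.
\end{itemize}
The Hecke-character and submotive-morphism routes you sketch for making this step rigorous are unnecessary.
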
 

We remark that Anderson's Legendre relation (see \cite[\S8.1]{og-mp:tdrifdmota})
implies that for each Drinfeld module, (a finite extension of) the field generated by its periods and quasi-periods always contains the Carlitz period. Hence, the transcendence degree given in the theorem is the maximal possible one under the given conditions.

\medskip

Our method allows us to answer such questions without the restriction to CM-types, but with a hypothesis on the $\fp$-adic Galois representation for some prime $\fp$ of $\bA$.
Furthermore, it includes the hyperderivatives.

\begin{maintheorem} (see Corollary \ref{C:trdegSeveralDM})\\ \label{mainthm:severalDMs}
    Let $C$ be the Carlitz module, and let $E_1, \ldots, E_m$ be Drinfeld modules over some finite separable extension $L$ of $K$. Assume that there exists a prime $\fp \in \bA$ such that the $\fp$-power torsion extensions $L(E_1[\fp^\infty]),\dots, L(E_m[\fp^\infty])$ are linearly disjoint field extensions of $L(C[\fp^\infty])$. 
    Then the only algebraic relations among all the periods and quasi-periods, and their hyperderivatives, are the algebraic relations among those of each $E_i$, $i=1,\dots, m$, Anderson's Legendre relation, and the relations derived thereof by hyperdifferentiating.
    
    In particular for any $n\geq 0$, the field extension $\mathcal{F}_n$ generated over $L$ by all periods and quasi-periods, and their hyperderivatives up to order $n$, satisfies
    \[ \trdeg(\mathcal{F}_n:L) = (n+1)\cdot \left( 1 + \sum_{i=1}^m \Bigl(\frac{r_i^2}{s_i} -1 \Bigr) \right) \]
    where for each $i=1,\ldots, m$, $r_i$ is the rank of $E_i$, and $s_i$ is the degree of the field of fraction of the endomorphism ring of $E_i$ over $\Fq(\theta)$.
\end{maintheorem}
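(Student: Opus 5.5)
We apply Theorem~\ref{mainthm:transcendence result} to the direct sum $E := C\oplus E_1\oplus\cdots\oplus E_m$. This is a uniformizable abelian $t$-module over $L$ of rank $r=1+\sum_i r_i$, and its $t$-motive is $\mot=\mot_C\oplus\bigoplus_i \mot_{E_i}$. Every Drinfeld module has dimension one, so all period coordinates are tractable. Thus $\mathcal{G}_n$ is exactly $\mathcal{F}_n$, after adjoining the Carlitz period $\tilde\pi$ and its hyperderivatives. By Anderson's Legendre relation these are already algebraic over $\mathcal{F}_n$, so adjoining them does not change the transcendence degree. The proof then has two parts: (a) identify $\Gamma_\mot$ and compute $\dim\Gamma_\mot = 1+\sum_i(r_i^2/s_i-1)$; (b) verify the openness hypothesis of Theorem~\ref{mainthm:transcendence result} at the given prime $\fp$.

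For (a), I use the Chang--Papanikolas description $\Gamma_{\mot_{E_i}}\cong \Cent_{\GL_{r_i}}(K_{\End,i})$, with $\Gamma_{\mot_C}=\GG_m$. The Legendre relation gives that the determinant of $\mot_{E_i}$ is isogenous to a twist of $\mot_C$. Hence $\Gamma_\mot$ is contained in the fibre product
\[ H:=\Bigl\{(x,g_1,\dots,g_m)\in \GG_m\times\prod_i \Cent_{\GL_{r_i}}(K_{\End,i}) \;\Bigm|\; \det{}_i(g_i)=x \ \text{for all } i\Bigr\}, \]
where $\det_i$ denotes the reduced norm over $K_{\End,i}$, possibly up to a fixed power. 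The upper bound $\Gamma_\mot\subseteq H$ follows from Tannakian functoriality: endomorphisms and the determinant isomorphisms are morphisms of $t$-motives, so $\Gamma_\mot$ must respect them. A dimension count then gives $\dim H = 1+\sum_i (r_i^2/s_i - 1)$.

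For (b), and simultaneously the lower bound $\Gamma_\mot\supseteq H^\circ$, I use $\fp$-adic Galois images. By Pink's openness theorem (Pink--Rütsche), the image of $\varrho_{E_i,\fp}$ is open in $\Cent_{\GL_{r_i}}(K_{\End,i})(\bK_\fp)$. Via the Weil pairing, its determinant/norm is identified with the Carlitz character, whose image is open in $\bA_\fp^\times$. The linear disjointness hypothesis says that $\Gal(L(E[\fp^\infty])/L)$ is the fibre product over $\Gal(L(C[\fp^\infty])/L)$ of the groups $\Gal(L(E_i[\fp^\infty])/L)$. Hence $\image(\varrho_{E,\fp})$ contains the fibre product of the individual images over the Carlitz image, and that fibre product is open in $H(\bK_\fp)$. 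Since the image always lies in $\Gamma_\mot(\bK_\fp)\subseteq H(\bK_\fp)$, the image is open in both, and $\Gamma_\mot$ and $H$ have the same identity component. This is the step I expect to be the main obstacle. One must check carefully that the intersection of the separate open images with the fibre-product condition is still open in $H(\bK_\fp)$. The worry is the kernel of the reduced norm on each open subgroup, together with the possible power in the Legendre identification. I would handle this by passing to commutator subgroups and using that the $\mathrm{SL}$-parts are open, so that the fibre condition only involves the abelian quotients. A minor further point is that Theorem~\ref{mainthm:transcendence result} requires openness in $\Gamma_\mot$ rather than only in $\Gamma_{\mot,0}$; I would address this by replacing $L$ by a finite extension, which changes neither transcendence degrees nor the hypothesis.

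Finally, Theorem~\ref{mainthm:transcendence result} together with Papanikolas' theorem gives $\trdeg(\mathcal{F}_n:L)=(n+1)\dim\Gamma_\mot$, which is the displayed formula. For the qualitative statement about algebraic relations, I use Theorem~\ref{mainthm:Galois group of prolongation}: $\Gamma_{\rho_n\mot}=\rP{n}(\Gamma_\mot)$. The ideal of relations is then generated by the defining equations of $H$ and their prolongations. These are the individual relations for each $E_i$, the Legendre relations, and their hyperderivatives. Since the dimensions match, no further relations can occur.
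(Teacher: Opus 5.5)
Your proposal is correct and is essentially the paper's argument: bound $\Gamma_\mot$ above by the fibre product of the $\Gamma_{\mot_{E_i}}$ over $\Gamma_{\mot_C}=\GG_m$ via determinants, use Pink's openness theorem plus linear disjointness to get a $\fp$-adically open Galois image in that fibre product, and conclude with Theorem~\ref{T:galois-group-of-prolongation}, Papanikolas' theorem and Theorem~\ref{T:hyperRATnPer}; the only structural difference is that the paper normalizes the $\varphi_i$ so that $\wedge^{r_i}\mot_{E_i}\cong\mot_C$ instead of adding $C$ as a summand. Two small corrections: the map $\Gamma_{\mot_{E_i}}\to\GG_m$ is the restriction of the ordinary determinant of $\GL_{r_i}$ (the norm of the reduced norm, which is what your dimension count actually uses), not the reduced norm to $K_{\End,i}$; and the step you flag as the main obstacle is immediate, because linear disjointness makes the image exactly the set of $(x,g_1,\dots,g_m)\in H(\bK_\fp)$ with each $g_i\in\image(\varrho_{E_i,\fp})$, which is open in $H(\bK_\fp)$ (hence in $\Gamma_\mot(\bK_\fp)$) for the subspace topology, so you need neither a commutator argument nor the extra base change to pass from $\Gamma_{\mot,0}$ to $\Gamma_\mot$.
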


\begin{Remark}
    In the articles \cite{cc-mp:aipldm} and \cite{cn:arhpldm}, the transcendence results are further extended to include logarithms of Drinfeld modules, and their hyperderivatives.
    For doing so in our case, we would need a statement on an explicit embedding of the Galois representations of any $t$-motive into the Galois group of the $t$-motive parallel to \cite[Thm.~E]{am:naamcigp}.
    We therefore decided to postpone the question on logarithms to future work. 
\end{Remark}

\subsection{Outline of the paper}

In Section \ref{S:prolongation-of-group-schemes}, we review Rosen's definition of prolongations in \cite{er:pda} in the case of affine schemes. Then we apply this to affine group schemes and provide some properties of these prolongations of group schemes.
In Section \ref{S:Zdensity}, we consider \emph{prolongated subgroups}, i.e., subgroups of points of such prolongations that are derived from subgroups of the points of the original group scheme via canonical maps $\prol{k}:G(\cK)\to \rP{k}(G)(\cK)$. The central part is 
Theorem \ref{mainthm:Zariski-density}.

Anderson $t$-modules will only appear from Section \ref{S:t-modules-and-motivic-galois-groups} on. This section gathers notation and known results on Anderson $t$-modules, and their $t$-motives to the extent that is relevant for our purpose. Readers that are familiar with these aspects of the theory can skip this section.

In Section \ref{S:Galois groups of prolongations}, we prove our main theorems
on the motivic Galois group of prolongations of $t$-modules (Theorems \ref{mainthm:upper-bound} and \ref{mainthm:Galois group of prolongation}), on the Mumford-Tate conjecture (Theorem \ref{mainthm:Mumford-Tate-conjecture}), and on the transcendence result for periods and quasi-periods (Theorem \ref{mainthm:transcendence result}). We also demonstrate the latter through examples at the end of this section.

In the final Section \ref{S:non-isogenous-Drinfeld-modules}, we illustrate the power of the results in Section~\ref{S:Galois groups of prolongations} by deducing our transcendence result on the periods, quasi-periods, and their hyperderivatives of several Drinfeld modules (Theorem \ref{mainthm:severalDMs}).
Then, we end this paper by computing explicit families of pairs of Drinfeld modules of rank $2$ that satisfy this hypothesis. The method includes a criterion for Drinfeld modules of rank $2$ such that its $t$-adic Galois representation is surjective (cf.~Corollary \ref{cor:surjective-Galois-representation}).

\section{Prolongation of group schemes}\label{S:prolongation-of-group-schemes}

When examining the motivic Galois groups of prolongations of $t$-motives, one encounters a general upper bound for these Galois groups from the special structure of these prolongations.

E.~Rosen \cite{er:pda} defined prolongations of schemes in positive characteristic which suits our Galois groups perfectly, and we will adapt Rosen's definition. We will first review the definition and properties that we will need, and then we will show that the prolongation of an affine group scheme is also an affine group scheme.

We emphasize that in this paper, we consider two different types of \emph{prolongations}: 
\begin{itemize} 
\item the $n$-th prolongations of schemes and group schemes which we denote using $\rP{n}(\cdot)$, and 
\item the $n$-th prolongations of $t$-modules $E$ and $t$-motives $\mot$ which we denote using $\rho_nE$ and $\rho_n\mot$, respectively. 
\end{itemize}
The former is the subject of the current section while the latter will be considered in Section \ref{S:t-modules-and-motivic-galois-groups}. 

\medskip

    Throughout this section, let $\cK$ be a field of characteristic $p>0$. 

\subsection{Higher derivations}
    
    \begin{Definition}\label{D:hyperder}
    A \emph{higher derivation} (or \emph{family of hyperdifferential operators} or \emph{hyperderivatives}) \emph{on} $\cK$ is a family $(\hde{k})_{k\geq 0}$ of additive maps $\hde{k}:\cK\to \cK$ satisfying the rules
    \begin{itemize}
        \item[(a)] $\hde{0}=\id_{\cK}$, and
        \item[(b)] for all $k\geq 0$, $a,b\in \cK$: $\quad \hd{k}{ab}= \sum_{i=0}^k \hd{i}{a}\hd{k-i}{b}$.
    \end{itemize}
    \end{Definition}

    We will consider $(\cK, \hde{*})$ (or $(\cK, (\hde{k})_{k \geq 0{}})$) throughout and write $\cK$ for short. 

With a given higher derivation, we associate the \emph{(generic) Taylor series homomorphism}
\begin{equation}\label{E:LmathcalD}
    \D:\cK\to \cK\ps{T}, a\mapsto \sum_{k=0}^\infty \hd{k}{a}T^k 
\end{equation}
which is a ring homomorphism due to condition (b).

\begin{Example}\label{Example:Hyperd}
For a field $F$ of characteristic $p$, 
an independent variable $x$, and the polynomial ring $F[x]$ over $F$, there is the higher derivation $(\hde{k}_x)_{k\geq 0}$ defined on polynomials by
\[
\hde{k}_{x} \left( \sum_{i} a_i x^i \right) = \sum_{i} \binom{i}{k} a_i x^{i-k}, 
\]
 where $\binom{i}{k}$ denotes the usual binomial coefficient modulo the characteristic $p$. Using  
 a quotient rule, these maps are extended to all of the field of fraction $\cK=F(x)$ or the Laurent series ring $\cK=F(\!(x)\!)$.
Further, for a place $\nu$ of $\cK$, there are unique continuous extensions of $\hde{k}_{x}$ to $\cK_{\nu}^{\sep}$, the separable closure of the completion of $\cK$ at $\nu$ (see e.g.~\cite[Theorems 5 \& 6]{kc:dp}).
\end{Example}

\begin{Definition}
For $\cK$-algebras $R$ and $B$, a \emph{higher derivation of order $n\geq 0$ from $R$ to $B$ over $\cK$} is a $\cK$-algebra homomorphism $\delta_0: R \rightarrow B$ and for $1\leq i \leq n$, abelian group homomorphisms $\delta_i: R \rightarrow B$ such that
\begin{enumerate}
    \item[(a)] $\delta_i(a) = \hde{i}(a)$ for all $a \in \cK, 0\leq i \leq n$,
    \item[(b)] $\delta_i(xy) = \sum_{j=0}^i \delta_j(x) \delta_{i-j}(y)$ for all $x,y\in R$, $0\leq i\leq n$.
\end{enumerate}
Let $\Hde^n_\cK(R, B)$ denote the set of \emph{higher derivations of order $n$} from $R$ to $B$ over $\cK$. 
\end{Definition}

 For a $\cK$-algebra $B$, we denote by $\tilde{B}_n$, the ring $B[T]/(T^{n+1})$ seen as a $\cK$-algebra via the truncated Taylor series homomorphism
	\begin{align} \label{eq:defintion-of-cal-D}
	\cK&\xrightarrow{\D} \cK\ps{T} 
    \to \cK\ps{T}/(T^{n+1})\to B[T]/(T^{n+1})=\tilde{B}_n,\\
    a&\longmapsto \hspace{4.5cm}\sum_{k=0}^n \hd{k}{a}T^k \notag
	\end{align}
    where the last arrow is just the homomorphism given by applying the structure map $\cK\to B$ coefficient-wise.

\begin{Remark}[{see \cite[Rem.~1.5]{er:pda}}]
Let $h:\tilde{B}_n \rightarrow B$ be the homomorphism $g(T) \mapsto g(0)$. Then $\delta_i: B \rightarrow B$ is a higher derivation if and only if the map $D: B \rightarrow \tilde{B}_n$ such that $D(r)=  \delta_0(r) + \delta_1(r)T + \dots + \delta_n(r)T^n$ is a homomorphism with $h \circ D = \Id_B$.
    \end{Remark}

 \begin{Proposition}\cite[Prop.~1.19]{er:pda}
 \label{prop:er-prop.1.19} 
     Let $R, B$ be $\cK$-algebras. Given $(\delta_i)_{0\leq i \leq n} \in \Hde_\cK^n(R,B)$, define a map $\varphi_{\delta}: R \rightarrow \tilde{B}_n$ by $\varphi_\delta(r) = \delta_0(r)+\delta_1(r)T + \dots + \delta_n(r)T^n$. Then, $\varphi_\delta$ is an element of $\Hom_\cK(R, \tilde{B}_n)$ and we have a bijection
     \begin{align*}
         \Hde_\cK^n(R,B&) \rightarrow \Hom_\cK(R, \tilde{B}_n) \\
        & \delta \mapsto \varphi_\delta.
     \end{align*}
 \end{Proposition}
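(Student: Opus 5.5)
The proof is a direct verification in two parts. First we show that $\varphi_\delta$ is a $\cK$-algebra homomorphism $R\to\tilde{B}_n$. Then we write down the inverse map and check that the two constructions undo each other.

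\emph{Step 1: $\varphi_\delta$ is a homomorphism.}
\begin{itemize}
\item \emph{Additivity.} Each $\delta_i$ is additive, so $\varphi_\delta$ is additive.
\item \emph{Multiplicativity.} For $x,y\in R$, expand $\varphi_\delta(x)\varphi_\delta(y)$ in $B[T]/(T^{n+1})$. The coefficient of $T^i$ for $i\le n$ is $\sum_{j=0}^i\delta_j(x)\delta_{i-j}(y)$. By rule (b) this equals $\delta_i(xy)$, so $\varphi_\delta(xy)=\varphi_\delta(x)\varphi_\delta(y)$. Monomials of degree $>n$ vanish, which is consistent with the truncation.
\item \emph{Unit.} Rule (b) with $x=y=1$ gives $\delta_0(1)^2=\delta_0(1)$. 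But $\delta_0$ is a $\cK$-algebra homomorphism, so $\delta_0(1)=1$ directly. Rule (a) applied to $1\in\cK$ gives $\delta_i(1)=\hd{i}{1}$. Since $\D$ is a ring homomorphism, $\D(1)=1$, so $\hd{i}{1}=0$ for $i\ge1$. Hence $\varphi_\delta(1)=1$.
\item \emph{$\cK$-linearity.} Here $\tilde{B}_n$ carries the $\cK$-algebra structure from \eqref{eq:defintion-of-cal-D}. Condition (a) gives, for $a\in\cK$ (viewed in $R$ via the structure map),
\[
\varphi_\delta(a\cdot 1_R)=\sum_{k=0}^n\hd{k}{a}T^k .
\]
This is exactly the image of $a$ under the structure map of $\tilde{B}_n$. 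A unital ring homomorphism that commutes with the structure maps is a $\cK$-algebra homomorphism.
\end{itemize}

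\emph{Step 2: the inverse map.} Given $\varphi\in\Hom_\cK(R,\tilde{B}_n)$, write
\[
\varphi(r)=\sum_{i=0}^n\delta_i(r)T^i ,
\]
which defines maps $\delta_i:R\to B$.
\begin{itemize}
\item Each $\delta_i$ is additive because $\varphi$ is.
\item Comparing coefficients of $T^i$ in $\varphi(xy)=\varphi(x)\varphi(y)$ yields rule (b).
\item Compatibility of $\varphi$ with the structure maps, read coefficientwise, yields rule (a).
\item Rule (b) at $i=0$ and $\varphi(1)=1$ show that $\delta_0$ is a unital ring homomorphism. Rule (a) at $i=0$ says $\delta_0$ restricted to $\cK$ is the structure map of $B$. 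So $\delta_0$ is a $\cK$-algebra homomorphism.
\end{itemize}

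\emph{Bijectivity.} The two constructions are inverse to each other by construction: a polynomial in $B[T]/(T^{n+1})$ is the same data as its coefficient tuple $(b_0,\dots,b_n)$. This gives the bijection.

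The only point needing care is matching $\cK$-linearity of $\varphi$ with condition (a). One must use the twisted $\cK$-structure on $\tilde{B}_n$ given by the truncated Taylor map, not the naive coefficientwise one; in other words, identify the image of $a\cdot1_R$ correctly. Everything else is coefficient comparison.
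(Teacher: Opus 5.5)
Your proof is correct, and it correctly identifies the one real subtlety: $\cK$-linearity of $\varphi_\delta$ has to be checked against the twisted structure map $a\mapsto\sum_{k=0}^n \hd{k}{a}T^k$ on $\tilde{B}_n$, which is exactly condition (a). The paper gives no proof of its own and simply cites Rosen's Prop.~1.19 (compare the Remark just before it), whose content is this same coefficient-comparison argument, so your approach matches.
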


\subsection{Definition and properties of prolongations}

\begin{Definition}[{\cite[Def.~1.9]{er:pda}}]
	Let $R$ be a $\cK$-algebra. The space of \emph{Hasse-Schmidt differentials} of $R$ over $\cK$ of length $n \geq 0$, denoted by 
	$HS^n_{R/(\cK,\hde{*})}$, is defined to be the $R$-algebra that is the quotient of the  polynomial $R$-algebra $R[d_i(x) | x\in R, 1\leq i\leq n]$ (generated by symbols $d_i(x)$) by the ideal $\mathcal{I}$ generated by:
	\begin{enumerate}
		\item[(a)] $d_i(x+y)-d_i(x)-d_i(y)$ for all $x,y\in R$, $1\leq i\leq n$,
		\item[(b)] $d_i(xy) =\sum_{j=0}^i d_j(x) d_{i-j}(y)$ for all $x,y\in R$, $1\leq i\leq n$,
		\item[(c)] $d_i(r)-\hd{i}{r}$ for all $r\in \cK$, $1\leq i\leq n$,
	\end{enumerate}
	where we set $d_0(x):=x$ for all $x\in R$. The space of Hasse-Schmidt differentials $HS^n_{R/(\cK,\hde{*})}$ comes with a universal derivation
	$\hdiffe{*}=(\hdiffe{i}:R\to HS^n_{R/(\cK,\hde{*})})_{0\leq i\leq n}$ such that for each $x\in R$,
	\[   \hdiff{i}{x}=d_i(x) \mod \mathcal{I}. \]
 \end{Definition}

    As in the classical case, this universal derivation satisfies a universal property among all higher derivations from $R$ to $\cK$-algebras.
    This is formalized as follows.

     \begin{Proposition}\cite[Prop.~1.18]{er:pda} \label{prop:er-prop.1.18}
     Let $R, B$ be $\cK$-algebras. Given $(\delta_i)_{0\leq i \leq n} \in \Hde_\cK^n(R,B)$, there exists a unique $\cK$-algebra homomorphism $ \phi: HS^n_{R/(\cK,\hde{*})} \rightarrow B$  such that $\delta_i = \phi \circ \hdiffe{i}$ for all $0\leq i\leq n$, i.e., the map $\Hom_\cK(HS^n_{R/(\cK,\hde{*})}, B) \rightarrow \Hde_\cK^n(R,B)$ is bijective. 
 \end{Proposition}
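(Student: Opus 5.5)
The statement is the universal property of Hasse--Schmidt differentials. The plan is to construct $\phi$ directly from the presentation of $HS^n_{R/(\cK,\hde{*})}$ as a quotient of a polynomial algebra, and then check uniqueness. Two facts set this up:
\begin{itemize}
\item $HS^n_{R/(\cK,\hde{*})}$ is an $R$-algebra, hence a $\cK$-algebra via $\cK\to R$.
\item A $\cK$-algebra homomorphism out of it restricts along $\hdiffe{0}=$ the structure map $R\to HS^n$ to a $\cK$-algebra map $R\to B$.
\end{itemize}

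For existence, let $(\delta_i)_{0\le i\le n}\in\Hde^n_\cK(R,B)$. First define a ring homomorphism
\[ \tilde\phi: R[d_i(x)\mid x\in R,\,1\le i\le n]\longrightarrow B \]
as follows. On the coefficient ring $R$ use the $\cK$-algebra map $\delta_0$, and send each free variable $d_i(x)$ to $\delta_i(x)$. The universal property of polynomial rings makes $\tilde\phi$ a well-defined homomorphism. It is $\cK$-linear because $\delta_0$ is a $\cK$-algebra map and $\cK$ enters only through the coefficients.

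Next check that $\tilde\phi$ kills the generators of $\mathcal I$. Generator (a) is killed because each $\delta_i$ is additive. Generator (b) is killed by the Leibniz rule (b) in the definition of higher derivations; here we use $d_0(x)=x\mapsto\delta_0(x)$, which holds by construction. Generator (c) is killed by rule (a) of that definition, which gives $\delta_i(r)=\hd{i}{r}$ for $r\in\cK$. Since $\hd{i}{r}\in\cK\subseteq R$ is a coefficient, it maps to $\delta_0(\hd{i}{r})=\hd{i}{r}\cdot 1_B$, which equals $\delta_i(r)$.

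Hence $\tilde\phi$ factors through a $\cK$-algebra homomorphism $\phi:HS^n\to B$ with $\phi\circ\hdiffe{i}=\delta_i$ for all $0\le i\le n$. For $i=0$ this holds because $\hdiffe{0}(x)=x$ is the image of the coefficient $x$.

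For uniqueness, note that $HS^n$ is generated as a ring by the images of $R$ (that is, $\hdiffe{0}(R)$) together with the classes $\hdiffe{i}(x)$. Any $\phi'$ with $\phi'\circ\hdiffe{i}=\delta_i$ for all $i$ therefore agrees with $\phi$ on a generating set, and so $\phi'=\phi$.

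It remains to see that the map $\Hom_\cK(HS^n,B)\to\Hde^n_\cK(R,B)$, $\psi\mapsto(\psi\circ\hdiffe{i})_i$, is well defined. Its components $\psi\circ\hdiffe{i}$ satisfy the axioms of a higher derivation because the relations (a)--(c) hold in $HS^n$ and $\psi$ is a $\cK$-algebra homomorphism. In particular $\psi\circ\hdiffe{0}$ is a $\cK$-algebra map: the $\cK$-structure on $HS^n$ comes through $R$, and relation (c) with $i=0$ is trivial. Existence and uniqueness then give bijectivity.

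I expect no real obstacle. The only delicate point is keeping the two $\cK$-structures straight: on $B$ it is the given one, and on $HS^n$ it is the one through $R$. Relation (c) is exactly what reconciles them with $\hde{*}$, so $\cK$-linearity needs a brief verification.
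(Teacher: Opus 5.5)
Your proposal is correct and follows the same route as the paper's sketch. Both define $\bar\phi$ on $R[d_i(x)\mid x\in R,\,1\le i\le n]$ by $\delta_0$ on the coefficients and $d_i(x)\mapsto\delta_i(x)$, check that $\mathcal{I}\subseteq\Ker(\bar\phi)$, and get uniqueness because $HS^n_{R/(\cK,\hde{*})}$ is generated by $R$ together with the $\hdiff{i}{x}$. You also spell out details the paper leaves implicit: relation (c) holds because $\delta_0(\hd{i}{r})=\hd{i}{r}\cdot 1_B$, and the reverse map $\psi\mapsto(\psi\circ\hdiffe{i})_i$ is well defined.
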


 \begin{proof}[Sketch of proof]
 The map $\bar{\phi}: R[d_i(x) | x\in R, 1\leq i\leq n] \rightarrow B$ defined by $d_i(x) \mapsto \delta_i(x)$ induces a the map $\phi$ since $\mathcal{I} \subseteq \Ker(\bar{\phi})$. Note that $\phi$ is unique since $\delta_i = \phi \circ \hdiffe{i}$ is unique. 
 \end{proof}
    
 \begin{Remark} (see \cite[Cor.~1.24]{er:pda})\\ \label{rem:explicit-description-of-HS}
	Suppose $R$ is given by generators $x_i$, $i\in I$, subject to relations $f_j$, $j\in J$ for some index sets $I$ and $J$, that is, $R= \cK[x_i: i\in I]/\left(f_j: j\in J\right)$. Then  
	\[ 
	HS^n_{R/(\cK,\hde{*})}\cong R\left[\hdiff{k}{x_i}:i \in I, 1\leq k\leq n\right]/\left(\hdiff{k}{f_j}: j\in J, 1\leq k\leq n\right).
	\]
\end{Remark}	

 Combining Proposition \ref{prop:er-prop.1.19} and Proposition \ref{prop:er-prop.1.18}, we get the following corollary.
\begin{Corollary} \label{cor:rosens-equality} \cite[Cor.1.20]{er:pda} 
	Let $R, B$ be $\cK$-algebras, then there is a natural bijection
	\[ \Hom_\cK\left(HS^n_{R/(\cK,\hde{*})}, B\right) \to \Hom_{\cK}(R,\tilde{B}_n). \]
	
\end{Corollary}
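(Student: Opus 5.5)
The bijection will be the composite of the two bijections already available, passing through the set $\Hde^n_\cK(R,B)$ of higher derivations of order $n$ from $R$ to $B$ over $\cK$. Proposition~\ref{prop:er-prop.1.18} gives a bijection
\[ \Hom_\cK\bigl(HS^n_{R/(\cK,\hde{*})},B\bigr)\to \Hde^n_\cK(R,B),\qquad \phi\mapsto (\phi\circ\hdiffe{i})_{0\le i\le n}. \]
Proposition~\ref{prop:er-prop.1.19} gives a bijection
\[ \Hde^n_\cK(R,B)\to \Hom_\cK(R,\tilde B_n),\qquad \delta\mapsto\varphi_\delta. \]
The composite sends $\phi$ to the map $r\mapsto \sum_{i=0}^n \phi(\hdiff{i}{r})T^i$, and as a composite of bijections it is a bijection. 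This is the whole of the existence statement.

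The only points needing care are that the target really is $\cK$-algebra homomorphisms and that the bijection is natural. For the first, $\tilde B_n$ carries the twisted $\cK$-algebra structure given by the truncated Taylor map~\eqref{eq:defintion-of-cal-D}. The map $\varphi_\delta$ is then $\cK$-linear in this sense precisely because of condition (a), $\delta_i(a)=\hd{i}{a}$ for $a\in\cK$, together with the product rule (b). This is exactly what Proposition~\ref{prop:er-prop.1.19} asserts, so nothing new needs to be checked.

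For naturality, I would argue in both variables. In $B$: a $\cK$-algebra map $g:B\to B'$ induces $\tilde g:\tilde B_n\to\tilde B'_n$ by acting coefficientwise, and this respects the twisted structure. Then $\tilde g\circ\varphi_\delta=\varphi_{g\circ\delta}$ and $(g\circ\phi)\circ\hdiffe{i}=g\circ(\phi\circ\hdiffe{i})$, so both squares commute. In $R$: a map $f:R\to R'$ induces $HS^n(f)$ by $\hdiff{i}{x}\mapsto\hdiff{i}{f(x)}$. This is well defined by the universal property, and compatibility is again a one-line check.

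I expect no real obstacle. The only subtle point is keeping track of the non-standard $\cK$-algebra structure on $\tilde B_n$, and that is already handled inside Proposition~\ref{prop:er-prop.1.19}.
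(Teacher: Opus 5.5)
Your proposal is correct and follows the paper's own argument: the corollary is obtained by composing the bijection of Proposition~\ref{prop:er-prop.1.18} with that of Proposition~\ref{prop:er-prop.1.19}, giving $\phi\mapsto\bigl(r\mapsto\sum_{i=0}^n\phi(\hdiff{i}{r})T^i\bigr)$. Your naturality checks in $R$ and $B$ add detail that the paper leaves implicit.
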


\begin{Definition}\cite[Def.2.3]{er:pda}\label{D:proAGS}
	Let $X=\Spec(R)$ be an affine scheme over $\cK$.
	The \emph{$n$-th prolongation} of $X$ is the affine $\cK$-scheme 
	\[ \rP{n}(X):= \Spec \left(HS^n_{R/(\cK,\hde{*})}\right).\footnote{Rosen uses a more general version \textbf{Spec} for prolongations. However, as our scheme $X$ is affine, \textbf{Spec}$\left(HS^n_{R/(\cK,\hde{*})}\right)$ is also affine and turns out to be the same as $\Spec \left(HS^n_{R/(\cK,\hde{*})}\right)$.} \]
\end{Definition}		

    Translating Corollary \ref{cor:rosens-equality} into the language of schemes, we directly get the following.
    
\begin{Proposition} \label{prop:rosens-equality}
    Let $X=\Spec(R)$ be an affine scheme over $\cK$. Then for all $\cK$-algebras $B$,
	\[  \rP{n}(X)(B) \cong X(\tilde{B}_n). \]
\end{Proposition}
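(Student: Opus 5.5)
This is a direct translation of Corollary~\ref{cor:rosens-equality} into the language of points of affine schemes. The plan is to unwind the definitions on both sides and check that the bijection is the one provided there.

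First, by Definition~\ref{D:proAGS}, $\rP{n}(X)=\Spec(HS^n_{R/(\cK,\hde{*})})$. Since both $\rP{n}(X)$ and $X$ are affine, their $B$-valued points (as $\cK$-schemes) are $\cK$-algebra homomorphisms out of the coordinate rings. Thus
\[ \rP{n}(X)(B)=\Hom_\cK\bigl(HS^n_{R/(\cK,\hde{*})},B\bigr). \]

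Second, for $X(\tilde{B}_n)$ I will be careful about the $\cK$-algebra structure on $\tilde{B}_n$. It is not the naive coefficientwise structure but the one given by the truncated Taylor homomorphism \eqref{eq:defintion-of-cal-D}. With that convention,
\[ X(\tilde{B}_n)=\Hom_\cK(R,\tilde{B}_n) \]
as $\cK$-schemes. This is exactly the target of the bijection in Corollary~\ref{cor:rosens-equality}.

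Composing the two identifications gives the claimed bijection $\rP{n}(X)(B)\cong X(\tilde{B}_n)$. Explicitly, a point $\phi$ of $\rP{n}(X)$ is sent to $r\mapsto\sum_{i=0}^n \phi(\hdiff{i}{r})T^i$, via Propositions~\ref{prop:er-prop.1.18} and~\ref{prop:er-prop.1.19}.

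If naturality in $B$ is wanted, I would check it on this explicit formula. A $\cK$-algebra map $B\to B'$ induces a map $\tilde{B}_n\to\tilde{B'}_n$ by acting coefficientwise. This map is compatible with the Taylor structure maps, since those are obtained by applying the structure map coefficientwise. It also commutes with composition with $\phi$.

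The only genuine point to watch is this twisted $\cK$-structure on $\tilde{B}_n$. Property (a) of a higher derivation, $\delta_i(a)=\hd{i}{a}$ for $a\in\cK$, is precisely the statement that $\varphi_\delta$ is $\cK$-linear for the Taylor structure, so there is no real obstacle.
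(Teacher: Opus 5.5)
Your proposal is correct and follows the paper's route exactly. The paper obtains this proposition by translating Corollary~\ref{cor:rosens-equality} into the language of points of affine schemes, with $\tilde{B}_n$ carrying the Taylor $\cK$-algebra structure from \eqref{eq:defintion-of-cal-D}; your explicit formula and naturality check are correct additions that the paper leaves implicit.
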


\begin{Lemma}\label{lem:prolongations-preserve-fibre-products}
    The mapping $\rP{n}$ sending affine $\cK$-schemes to affine $\cK$-schemes is a covariant functor that preserves fibre products.
\end{Lemma}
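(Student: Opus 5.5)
The plan is to prove both claims through the functor-of-points description in Proposition~\ref{prop:rosens-equality}, combined with Yoneda's lemma for affine $\cK$-schemes. Everything reduces to the following observation. For a $\cK$-algebra $B$, the assignment $B\mapsto \tilde{B}_n$ is a functor from $\cK$-algebras to $\cK$-algebras. Indeed, a $\cK$-algebra homomorphism $g:B\to B'$ induces $\tilde g:B[T]/(T^{n+1})\to B'[T]/(T^{n+1})$ by applying $g$ coefficientwise. This map respects the twisted $\cK$-structure~\eqref{eq:defintion-of-cal-D}, because that structure is obtained by applying the structure map $\cK\to B$ coefficientwise to the truncated Taylor series $\sum_k \hd{k}{a}T^k$, and $g$ commutes with the structure maps. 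Hence $\rP{n}(X)(B)\cong X(\tilde B_n)$ is a functor in $B$. Since $X(\tilde B_n)=\Hom_\cK(R,\tilde B_n)$ and the bijection of Corollary~\ref{cor:rosens-equality} is natural in $B$, this identification is compatible with the functor structure.

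\emph{Functoriality.} Given a morphism of affine $\cK$-schemes $f:X=\Spec R\to Y=\Spec S$, i.e.\ a $\cK$-algebra map $f^\#:S\to R$, I will define $\rP{n}(f)$ directly on rings. The composite of $f^\#$ with the universal derivation, $(\hdiffe{i}\circ f^\#)_{i}$, is a higher derivation of order $n$ from $S$ to $HS^n_{R/(\cK,\hde{*})}$ over $\cK$. By Proposition~\ref{prop:er-prop.1.18} it corresponds to a unique $\cK$-algebra homomorphism $HS^n_{S}\to HS^n_{R}$, and I set $\rP{n}(f)$ to be the induced morphism of spectra. The uniqueness in the universal property then gives $\rP{n}(\id)=\id$ and $\rP{n}(g\circ f)=\rP{n}(g)\circ\rP{n}(f)$ immediately. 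Equivalently, on points $\rP{n}(f)$ is the map $X(\tilde B_n)\to Y(\tilde B_n)$, $\varphi\mapsto \varphi\circ f^\#$. I will check this compatibility, because it is what makes the identification of Proposition~\ref{prop:rosens-equality} natural in $X$ as well as in $B$.

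\emph{Fibre products.} Let $X\to Z\leftarrow Y$ be affine, so $X\times_Z Y=\Spec(R\otimes_T S)$ is affine. For every $\cK$-algebra $B$, the naturality above gives
\[ \rP{n}(X\times_Z Y)(B)\cong (X\times_Z Y)(\tilde B_n)= X(\tilde B_n)\times_{Z(\tilde B_n)} Y(\tilde B_n)\cong \rP{n}(X)(B)\times_{\rP{n}(Z)(B)}\rP{n}(Y)(B). \]
The right-hand side is $(\rP{n}(X)\times_{\rP{n}(Z)}\rP{n}(Y))(B)$. All these bijections are natural in $B$, so Yoneda's lemma on affine $\cK$-schemes yields an isomorphism $\rP{n}(X\times_ZY)\cong\rP{n}(X)\times_{\rP{n}(Z)}\rP{n}(Y)$. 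Moreover, this isomorphism is the canonical comparison map induced by the projections. As a sanity check on the ring side, one could alternatively verify $HS^n_{R\otimes_T S}\cong HS^n_R\otimes_{HS^n_T}HS^n_S$ using the presentation in Remark~\ref{rem:explicit-description-of-HS}, but the Yoneda argument avoids this computation.

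The only step needing care is the naturality, in both $B$ and $X$, of the bijection in Corollary~\ref{cor:rosens-equality}. One has to verify that $\tilde g$ is a $\cK$-algebra map for the twisted structure, and that composing with $f^\#$ corresponds to precomposing with $\rP{n}(f)^\#$. Both follow from unwinding the explicit formula $\varphi_\delta(r)=\sum_i\delta_i(r)T^i$ of Proposition~\ref{prop:er-prop.1.19}. I expect this bookkeeping to be the main, though routine, obstacle.
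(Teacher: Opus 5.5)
Your proof is correct. For functoriality it agrees with the paper, which cites Rosen's functoriality of $HS^n$, i.e.\ the universal-property construction you spell out. For fibre products you take a different route.

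\emph{The paper's route.} It stays on the ring side. It obtains a map $HS^n_{R/(\cK,\hde{*})}\otimes_{HS^n_{T/(\cK,\hde{*})}}HS^n_{S/(\cK,\hde{*})}\to HS^n_{R\otimes_T S/(\cK,\hde{*})}$ from the universal property of the tensor product. It gets a map back by exhibiting a higher derivation $(\delta_k)_{0\le k\le n}$ on $R\otimes_T S$ with values in the tensor product and applying Proposition~\ref{prop:er-prop.1.18}. It then asserts that the two maps are mutually inverse.

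\emph{Your route.} You use Proposition~\ref{prop:rosens-equality} to identify $\rP{n}$ with $X\mapsto X(\tilde B_n)$, observe that taking $\tilde B_n$-points commutes with fibre products, and let Yoneda produce the isomorphism. The cost is naturality of Rosen's bijection in both $B$ and $X$. You correctly single this out, and it does reduce to unwinding $\varphi_\delta(r)=\sum_i\delta_i(r)T^i$ and checking that $\tilde g$ respects the twisted $\cK$-structure.

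\emph{What each buys.} Your route removes both verifications the paper leaves to the reader. In particular, for $k\geq 1$ the $\delta_k$ are not ring homomorphisms, so the paper's diagrams do not produce them by the universal property of $\otimes_T$ alone. One needs the convolution formula $\delta_k(r\otimes s)=\sum_{i+j=k}\hdiffe{i}(r)\hdiffe{j}(s)$ plus a $T$-balancedness check. Your $\tilde B_n$ formalism packages this into the single ring map $r\otimes s\mapsto\bigl(\sum_i\hdiffe{i}(r)T^i\bigr)\bigl(\sum_j\hdiffe{j}(s)T^j\bigr)$. Your argument also shows, with no extra work, that $\rP{n}$ preserves every limit in affine $\cK$-schemes, not just fibre products. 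The paper's approach, in turn, gives the isomorphism explicitly on generators, $\hdiffe{k}(r\otimes s)\mapsto\sum_{i+j=k}\hdiffe{i}(r)\otimes\hdiffe{j}(s)$, with no appeal to Yoneda.
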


\begin{proof}
    Functoriality is just the schematic version of the functoriality of $HS^n$ which is already given in \cite{er:pda}, so we are left to prove that the functor preserves fibre products, i.e., given affine $\cK$-schemes $X,Y,Z$, and morphisms  $f:X\to Z$ and $g:Y\to Z$,
    we have
    \[   \rP{n}(X\times_Z Y) \cong \rP{n}(X)\times_{\rP{n}(Z)}\rP{n}(Y). \]
    Here, the left hand side is the prolongation of the fibre product of $X$ and $Y$ with respect to the two morphisms $f$ and $g$, and the right hand side is the fibre product with respect to the two morphisms $\rP{n}(f):\rP{n}(X)\to \rP{n}(Z)$ and $\rP{n}(g):\rP{n}(Y)\to\rP{n}(Z)$.
    Letting $X=\Spec(R)$, $Y=\Spec(S)$, $Z=\Spec(T)$, and $f^\#:T\to R$, $g^\#:T\to S$ be the $\cK$-algebra homomorphisms corresponding to the maps $f$ and $g$, it is equivalent to showing that 
    \[  HS^n_{R\otimes_T S/(\cK,\hde{*})} \cong HS^n_{R/(\cK,\hde{*})}\otimes_{HS^n_{T/(\cK,\hde{*})}}HS^n_{S/(\cK,\hde{*})}.\]
    On the one hand, by functoriality, we have a commutative diagram
    
    \[ \xymatrix{ & HS^n_{R/(\cK,\hde{*})} \ar[dr] &\\
    HS^n_{T/(\cK,\hde{*})} \ar[ur] \ar[dr] & & HS^n_{R\otimes_T S/(\cK,\hde{*})}\\
     &  HS^n_{S/(\cK,\hde{*})} \ar[ur] &    
    }  \]          
    By the universal property of the tensor product, this induces a homomorphism
    \[ HS^n_{R/(\cK,\hde{*})}\otimes_{HS^n_{T/(\cK,\hde{*})}}HS^n_{S/(\cK,\hde{*})} \to HS^n_{R\otimes_T S/(\cK,\hde{*})}. \]
    On the other hand, for all $0\leq k\leq n$, we have commutative diagrams 
    \[ \xymatrix{ & R \ar[r]^(.3){\hdiffe{k}_R} & HS^n_{R/(\cK,\hde{*})} \ar[dr] &\\
    T  \ar[ur] \ar[dr]& & & HS^n_{R/(\cK,\hde{*})}\otimes_{HS^n_{T/(\cK,\hde{*})}}HS^n_{S/(\cK,\hde{*})}\\
     & S \ar[r]^(.3){\hdiffe{k}_S} & HS^n_{S/(\cK,\hde{*})} \ar[ur] &    
    }  \]     
    which induce maps
    \[  \delta_k: R\otimes_T S \to HS^n_{R/(\cK,\hde{*})}\otimes_{HS^n_{T/(\cK,\hde{*})}}HS^n_{S/(\cK,\hde{*})},\]
    and it is straightforward to see that the collection $(\delta_k)_{0\leq k\leq n}$ is a higher derivation. Hence, by the universal property of $HS^n_{R\otimes_T S/(\cK,\hde{*})}$, we have an induced homomorphism
    \[ HS^n_{R\otimes_T S/(\cK,\hde{*})}\to HS^n_{R/(\cK,\hde{*})}\otimes_{HS^n_{T/(\cK,\hde{*})}}HS^n_{S/(\cK,\hde{*})} \,.\]
    Using again universal properties, it is not hard to verify that the two constructed homomorphisms are inverse to each other.
\end{proof}

\subsection{Prolongations of group schemes}

\begin{Theorem}	
		Let $G$ be an affine group scheme over $\cK$, then its $n$-th prolongation $\rP{n}(G)$ is an affine group scheme.
\end{Theorem}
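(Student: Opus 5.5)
The plan is to transport the group structure of $G$ to $\rP{n}(G)$ by functoriality, using that $\rP{n}$ preserves fibre products. Write $G=\Spec(R)$, with multiplication $m:G\times_\cK G\to G$, unit $e:\Spec\cK\to G$ and inverse $i:G\to G$, all morphisms of affine $\cK$-schemes. By Lemma~\ref{lem:prolongations-preserve-fibre-products}, applied with $Z=\Spec\cK$, we get
\[
\rP{n}(G\times_\cK G)\cong \rP{n}(G)\times_{\rP{n}(\Spec\cK)}\rP{n}(G).
\]
So the first step is to show that $\rP{n}(\Spec \cK)\cong\Spec\cK$, i.e. $HS^n_{\cK/(\cK,\hde{*})}=\cK$. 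This follows from relation (c) in the definition, since every $d_i(r)$ with $r\in\cK$ is identified with $\hd{i}{r}\in\cK$. Equivalently, by Proposition~\ref{prop:rosens-equality}, $\rP{n}(\Spec\cK)(B)=\Hom_\cK(\cK,\tilde B_n)$ is a single point for every $B$. Hence $\rP{n}(G\times_\cK G)\cong \rP{n}(G)\times_\cK\rP{n}(G)$, and $\rP{n}(m)$, $\rP{n}(e)$, $\rP{n}(i)$ give candidate multiplication, unit and inverse morphisms.

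The group axioms (associativity, unit and inverse laws) are commutative diagrams built from $m,e,i$, projections, diagonals and fibre products. The functor $\rP{n}$ preserves composition and, by the lemma, these fibre products compatibly with the projections. Applying $\rP{n}$ to each diagram therefore yields the corresponding diagram for $\rP{n}(G)$. The unit law uses $\rP{n}(\Spec\cK)=\Spec\cK$ again.

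A cleaner alternative, which I would actually write down, argues on functors of points. By Proposition~\ref{prop:rosens-equality}, $\rP{n}(G)(B)\cong G(\tilde B_n)$ naturally in the $\cK$-algebra $B$. Here naturality holds because a $\cK$-algebra map $B\to B'$ induces a $\cK$-algebra map $\tilde B_n\to\tilde B'_n$ coefficientwise, and this is compatible with the twisted structure maps. Since $G(\tilde B_n)$ is a group and maps induced by $\cK$-algebra homomorphisms are group homomorphisms, $B\mapsto\rP{n}(G)(B)$ is a group-valued functor. By Yoneda, the representable affine scheme $\rP{n}(G)$ is then an affine group scheme.

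The main obstacle is the naturality of the bijection in Corollary~\ref{cor:rosens-equality}/Proposition~\ref{prop:rosens-equality} in $B$, together with the check that $\tilde B_n$ is genuinely a $\cK$-algebra. The latter holds because $\D$ is a ring homomorphism, and I would verify it directly from the explicit map $d_i(x)\mapsto\delta_i(x)$.
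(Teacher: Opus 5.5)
Your proposal is correct, and the functor-of-points argument you say you would actually write down is exactly the paper's proof: by Proposition~\ref{prop:rosens-equality}, $\rP{n}(G)(B)\cong G(\tilde B_n)$ naturally in $B$, so $\rP{n}(G)$ represents a group-valued functor and is therefore an affine group scheme. Your first route, which transports $m,e,i$ via Lemma~\ref{lem:prolongations-preserve-fibre-products} together with $\rP{n}(\Spec\cK)\cong\Spec\cK$, is also valid, and your remarks on naturality in $B$ and on $\tilde B_n$ being a $\cK$-algebra (because $\D$ is a ring homomorphism) make explicit what the paper leaves implicit.
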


\begin{proof}
	By definition, $\rP{n}(G)$ is an affine scheme. By Proposition~\ref{prop:rosens-equality}, it represents a functor to groups, as for any $\cK$-algebra $B$, the points $\rP{n}(G)(B)\cong G(\tilde{B}_n)$ build a group in a natural way. Hence, $\rP{n}(G)$ is indeed a group scheme.
\end{proof}

\begin{Proposition}\label{prop:ses-and-dim-of-prolongations}
	Let $n\geq 1$, and $G$ be an affine group scheme over $\cK$. For any $\cK$-algebra $B$, there is a natural exact sequence 
	\begin{equation}\label{eq:ses-of-prolongations}
	 0 \to \Lie(G)(B)\to 
\rP{n}(G)(B) \xrightarrow{\pi_{n,n-1}} 
\rP{n-1}(G)(B) \to 0.
\end{equation}	 
	If $G$ is smooth, the dimension of the prolongation group scheme $\rP{n}(G)$ is given as
	\[ \dim(\rP{n}(G))=(n+1)\cdot \dim(G).\]
\end{Proposition}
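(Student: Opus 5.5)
Via Proposition~\ref{prop:rosens-equality} I will work on points throughout, identifying $\rP{n}(G)(B)=G(\tilde B_n)$ for every $\cK$-algebra $B$, where $\tilde B_n=B[T]/(T^{n+1})$ carries the twisted $\cK$-structure through the truncated Taylor map $\D$. The projection $\tilde B_n\to\tilde B_{n-1}$, $T\mapsto T$, reduces modulo $T^n$. It is a $\cK$-algebra homomorphism because truncating $\D$ modulo $T^{n+1}$ and then modulo $T^n$ is the same as truncating modulo $T^n$. Hence it induces a group homomorphism $\pi_{n,n-1}:G(\tilde B_n)\to G(\tilde B_{n-1})$, natural in $B$, and this is the map in \eqref{eq:ses-of-prolongations}.

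\textbf{Kernel.} The kernel of $\tilde B_n\to\tilde B_{n-1}$ is $I=BT^n$, and $I^2=0$ since $n\geq 1$. Write $G=\Spec(R)$ with augmentation ideal $\fa\subset R$. A point $g\in G(\tilde B_n)$ that maps to $1$ in $G(\tilde B_{n-1})$ is a $\cK$-algebra map $g:R\to \tilde B_n$ that agrees with the counit $\varepsilon$ modulo $I$.

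Here the counit means the composite $R\xrightarrow{\varepsilon}\cK\xrightarrow{\D}\tilde B_n$, and it is not the constant map; this twist is the subtle point.

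The difference $g-\varepsilon$ lands in $I$ and is an $\varepsilon$-derivation $R\to I$. It therefore factors through $\fa/\fa^2$, so the kernel is $\Hom_\cK(\fa/\fa^2, I)$.

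It remains to identify $I\cong B$ as a $\cK$-module. For $a\in\cK$ and $bT^n\in I$ we have $\D(a)\cdot bT^n=a\,bT^n$, because $T^{n+1}=0$ kills every higher-order term. So the $\cK$-structure on $I$ is the untwisted one, and the kernel is $\Hom_\cK(\fa/\fa^2,B)=\Lie(G)(B)$. Multiplying two kernel elements adds the derivations, again because $I^2=0$, so the identification is a group isomorphism.

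\textbf{Surjectivity.} This is the main obstacle. As a statement about points for arbitrary $B$ it needs an argument. I will choose a trivialization: the map $s:\tilde B_{n-1}\to\tilde B_n$, $\sum_{k<n}b_kT^k\mapsto\sum_{k<n}b_kT^k$, is not a ring map. Instead I will argue via the explicit description in Remark~\ref{rem:explicit-description-of-HS}.

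Suppose $R=\cK[x_i]/(f_j)$. Then $HS^n_R$ is obtained from $HS^{n-1}_R$ by adjoining the variables $\hdiff{n}{x_i}$ subject to the relations $\hdiff{n}{f_j}$. Each such relation has the form $\sum_i \frac{\partial f_j}{\partial x_i}\hdiff{n}{x_i}+(\text{terms in lower }d_k)$, so it is affine-linear in the new variables. Lifting a point of $\rP{n-1}(G)(B)$ therefore means solving an inhomogeneous linear system whose homogeneous part is the Jacobian system defining the Lie algebra.

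Solvability is where smoothness enters. If $G$ is smooth, the lifting property for the square-zero extension $\tilde B_n\to\tilde B_{n-1}$ (formal smoothness) gives surjectivity directly, at least for all $B$ in the smooth case. For a general affine group scheme I would read the sequence as exact on the fppf sheaf level, or else assume smoothness as needed. I would make this caveat explicit.

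\textbf{Dimension.} By induction on $n$: $\rP{0}(G)=G$, and the exact sequence exhibits $\rP{n}(G)$ as an extension of $\rP{n-1}(G)$ by the vector group $\Lie(G)\cong\GG_a^{\dim G}$. By the affine-linear description above, $\rP{n}(G)\to\rP{n-1}(G)$ is a torsor under this vector group, and it is smooth and surjective. Hence $\dim\rP{n}(G)=\dim\rP{n-1}(G)+\dim G$, which gives $\dim(\rP{n}(G))=(n+1)\cdot\dim(G)$.
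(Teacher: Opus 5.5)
Your identification of the kernel is the paper's argument: the square-zero ideal $BT^n\subset\tilde B_n$ plays the role of $B\varepsilon\subset B[\varepsilon]/(\varepsilon^2)$. You are more careful than the paper, which writes $\Ker\bigl(G(B[T^n]/T^{n+1})\to G(B)\bigr)$ without comment. You observe that the identity of $G(\tilde B_n)$ is $\D\circ\varepsilon$, and that $\D(a)$ acts on $BT^n$ through the plain scalar $a$; this is exactly why the kernel is the untwisted $\Lie(G)(B)$.

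The real difference is surjectivity of $\pi_{n,n-1}$. The paper never addresses it: it computes the kernel and then calls the sequence exact. You instead reduce surjectivity to the infinitesimal lifting property of $G$ along the square-zero extension $\tilde B_n\to\tilde B_{n-1}$ of $\cK$-algebras, with the twisted structures included. For smooth $G$ this is a complete argument. It is also what the paper's inductive dimension count, which is the same as yours, tacitly relies on.

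You are right that something is needed here, but drop the fppf fallback. For non-smooth $G$ the map $\pi_{n,n-1}$ need not be surjective, not even fppf-locally. Take $G=\mu_p$ (its equation has coefficients in $\FF_p$, so no twist occurs) and $n=p$. Since $y^p=y_0^p+y_1^pT^p$ in $B[T]/(T^{p+1})$, the group $\rP{p}(\mu_p)(B)$ consists of the $y$ with $y_0^p=1$ and $y_1^p=0$. By contrast, $\rP{p-1}(\mu_p)(B)$ only requires $y_0^p=1$. Thus $1+T\in\rP{p-1}(\mu_p)(\cK)$ has no preimage. No faithfully flat $B\to B'$ can produce one either, because a lift over $B'$ would force $1=0$ in $B'$.

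So the correct statement is as follows. The kernel is $\Lie(G)(B)$ for every affine $G$. Surjectivity on the right, and with it the dimension formula, requires $G$ to be smooth. You should impose that hypothesis rather than reinterpret the sequence at the sheaf level.
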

	
\begin{proof}
	The natural projection $\tilde{B}_n=B[T]/(T^{n+1}) \to B[T]/(T^{n})=\tilde{B}_{n-1}$ induces the map
	$\pi_{n,n-1}:\rP{n}(G)(B) \to 
\rP{n-1}(G)(B)$, and its kernel equals
	\begin{eqnarray*}
		\Ker\left(G(B[T^{n}]/T^{n+1})\to G(B)\right)&=&\Ker\left(G(B[\varepsilon]/\varepsilon^2)\to G(B)\right)\\
		&=& \Lie(G)(B),
	\end{eqnarray*}
	by definition of the Lie algebra as the tangent space at $\one$. This proves the exactness of the given sequence.
		
	As $\rP{0}(G)=G$ and $\dim (\Lie(G))=\dim (G)$ for smooth group schemes $G$, the statement on the dimension is inductively obtained using the short exact sequence.
\end{proof}

\begin{Lemma}\label{lem:connected-component}
Let $G$ be an affine group scheme over $\cK$, and $\rP{n}(G)$ its $n$-th prolongation.
Denote by $G_0$ and $\rP{n}(G)_0$ the connected component of $1$ in these two groups respectively.
Then \[ \rP{n}(G_0)=\rP{n}(G)_0.\]
Further, $\rP{n}(G)/\rP{n}(G)_0\cong G/G_0$.
\end{Lemma}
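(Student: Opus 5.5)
We want to show $\rP{n}(G_0)=\rP{n}(G)_0$ and that $\rP{n}(G)/\rP{n}(G)_0\cong G/G_0$. The approach is to compare $\rP{n}(G)$ with $G$ via the projection $\pi:=\pi_{n,0}:\rP{n}(G)\to \rP{0}(G)=G$. This projection comes from $\tilde{B}_n\to B$, $T\mapsto 0$, in Proposition~\ref{prop:rosens-equality}. The key structural fact we want is that the kernel of $\pi$ is connected. By Proposition~\ref{prop:ses-and-dim-of-prolongations} and induction on $n$, this kernel is a successive extension of copies of $\Lie(G)$, i.e.\ of vector groups (at least when $G$ is smooth/reduced). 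A successive extension of connected groups is connected, so $\Ker(\pi)$ is connected.

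First, by Lemma~\ref{lem:prolongations-preserve-fibre-products} and functoriality, the open and closed immersion $G_0\hookrightarrow G$ induces a morphism $\rP{n}(G_0)\to\rP{n}(G)$. I would check that this is again an open and closed immersion. On points, $\rP{n}(G_0)(B)=G_0(\tilde{B}_n)\subseteq G(\tilde{B}_n)$. Since $\tilde{B}_n\to B$ is a nilpotent thickening and $G_0\subseteq G$ is open and closed, a $\tilde{B}_n$-point of $G$ lies in $G_0$ if and only if its reduction to $B$ does. Hence
\[ \rP{n}(G_0)=\pi^{-1}(G_0) \]
as subfunctors, so it is open and closed in $\rP{n}(G)$. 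It also follows that $\rP{n}(G)\times_G G_0=\rP{n}(G_0)$, and consequently $\rP{n}(G)=\coprod_{g}\pi^{-1}(gG_0)$ over the components of $G$. In particular, $\rP{n}(G_0)$ is an open-closed subgroup containing $\rP{n}(G)_0$.

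Next, I need $\rP{n}(G_0)$ to be connected. It fits into $1\to \Ker(\pi)\to \rP{n}(G_0)\to G_0\to 1$, where $\pi$ is surjective: if $G$ is smooth, smoothness lets $B$-points lift to $\tilde{B}_n$-points, or one can use the splitting $\prol{n}$ over $\cK$. Connectedness of a group scheme extension of connected groups then gives that $\rP{n}(G_0)$ is connected, so $\rP{n}(G_0)=\rP{n}(G)_0$.

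Finally, $\pi$ induces a surjection $\rP{n}(G)\to G\to G/G_0$ whose kernel is $\pi^{-1}(G_0)=\rP{n}(G_0)=\rP{n}(G)_0$. This gives $\rP{n}(G)/\rP{n}(G)_0\cong G/G_0$.

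The main obstacle is the connectedness of $\Ker(\pi)$ in general, since the identification of each graded piece with the vector group $\Lie(G)$ is clean only for smooth $G$. In our application the groups live over characteristic $p$ fields and are typically smooth (motivic Galois groups are smooth over $\bK$, being reduced). If needed, I would add that hypothesis, or argue directly that each kernel $\Ker(G(\tilde{B}_k)\to G(\tilde{B}_{k-1}))$ is represented by a vector group scheme $\mathbb{G}_a^{\dim}$, which is connected.
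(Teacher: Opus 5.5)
Your proposal is correct and is essentially the paper's argument. The paper simply invokes the exact sequences $0\to\Lie(G)\to\rP{n}(G)\to\rP{n-1}(G)\to 0$ of Proposition~\ref{prop:ses-and-dim-of-prolongations} with connected kernel $\Lie(G)$, which is your filtration of $\Ker(\pi_{n,0})$, while your nilpotent-thickening identification $\rP{n}(G_0)=\pi^{-1}(G_0)$ spells out what the paper leaves implicit.

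You are right that smoothness is genuinely needed for the surjectivity of $\pi$. For example, for $G=\ker(x\mapsto x^{p^2}-\theta^{p-1}x^p)\subseteq\GG_a$ over $\cK=\Fq(\theta)$ with $\hdthe{*}$, one has $G/G_0\cong\ZZ/p$, but $\rP{1}(G)\cong\Spec\cK[x_0,x_1]/(x_0^p)$ is connected. However, your two side remarks do not supply smoothness: reducedness does not imply smoothness over the imperfect field $\bK$, and $\prol{n}$ is only a map on $\cK$-points, not a morphism of schemes.
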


\begin{proof}
This follows directly from Proposition \ref{prop:ses-and-dim-of-prolongations}, as $\Lie(G)$ is connected.
\end{proof}

We will make use of the following group homomorphism.

\begin{Definition}\label{def:gammatohyper}
For an affine group scheme $G$ over $\cK$ and $n\geq 0$, we denote by
\[ \prol{n}: G(\cK)\to \rP{n}(G)(\cK) \]
the natural group homomorphism induced by $\D:\cK\to \cK[T]/T^{n+1}$ using the identification $G(\cK[T]/T^{n+1})\cong \rP{n}(G)(\cK)$. If $\Delta\subseteq G(\cK)$ is a subgroup, we will call its image $\prol{n}(\Delta)\subseteq \rP{n}(G)(\cK)$ a \emph{prolongated} subgroup.

\end{Definition}

In the explicit description of $\rP{n}(G)\subseteq \GL_{r(n+1)}$ for $G\subseteq \GL_r$ given in the next section, this homomorphism is:
\begin{equation}\label{eq:gammatohyper}
	\prol{n}: G(\cK)\to \rP{n}(G)(\cK), X_0\mapsto \prol{n}(X_0)=\begin{pmatrix}
			X_0 & \hd{1}{X_0} & \dots& \hd{n}{X_0} \\
			&    \ddots     & \ddots&\vdots\\
			&&\ddots&\hd{1}{X_0}\\
			&&&X_0
		\end{pmatrix}
\end{equation}
where as usual, for the matrix $X_0$, the term $\hd{i}{X_0}$ is meant to apply $\hde{i}$ entry-wise.

\begin{Remark}
If $B$ is a $\cK$-algebra equipped with a higher derivation $\hde{*}$ extending the one on $\cK$, we have in the same way a group homomorphism 
\begin{equation}\label{eq:gammatohyperoverB}
	\prol{n}: G(B)\to \rP{n}(G)(B), X_0\mapsto \prol{n}(X_0)
\end{equation}
\end{Remark}

\subsection{Explicit description as subgroup of a general linear group}\label{SubS:prolongation-as-subgroup-of-GL}

Consider now an affine group scheme $G$ over $\cK$ with a fixed embedding into some $\GL_{r}$. This means we let $G\subseteq \GL_r$ be defined by an ideal $I_G\subseteq \cK[Y_{ij},1/\det(Y)] $, i.e., for all $\cK$-algebras $B$, and $X\in \GL_r(B)$ we have
\[ X\in G(B) \Longleftrightarrow \text{for all } Q\in I_G, \text{ we have } Q(X)=0. \]
From the equality in Proposition \ref{prop:rosens-equality}, we have
\begin{equation}\label{E:proT}
\rP{m}(G)(B)= G(B[T]/T^{m+1}) 
\end{equation}
with the $\cK$-algebra structure on $\tilde{B}_m=B[T]/T^{m+1}$ given by Formula \eqref{eq:defintion-of-cal-D}, i.e., $\cK \to B[T]/T^{m+1}$ is given by $a \mapsto \sum_{\ell=0}^m\partial^{(\ell)}(a)T^\ell\in \cK[T]/T^{m+1}\subseteq B[T]/T^{m+1}$ (cf.~Equation \eqref{eq:defintion-of-cal-D}). 

Now, for $Q \in I_G$, let $Q^{\D}$ denote the polynomial obtained by applying $\D$ from \eqref{E:LmathcalD} to the coefficients of $Q$. By \eqref{E:proT}, we see that $\rP{m}(G)(B)$ consists of those matrices of the form $X_0+X_1T+\ldots+X_mT^m$ ($X_j\in \Mat_{r\times r}(B)$) such that
\[ Q^\D(X_0+X_1T+\ldots+X_mT^m) \equiv 0 \mod T^{m+1}\, \text{ for all } Q\in I_G.
\]
Grouping together coefficients of each  $T^\ell$, $0 \leq \ell \leq m$, suppose
\begin{equation}\label{E:QS} 
Q^\D(Y_0+ Y_1T+ \dots+ Y_m T^m) \equiv  S_0 + S_1 T + \dots + S_m T^m \mod T^{m+1},
\end{equation}
where $S_\ell \in \cK[Y_0, Y_1, \dots, Y_m]$. 

Note that we have a natural $\cK$-algebra embedding $\cK[T]/(T^{m+1})\to \Mat_{(m+1)\times (m+1)}(\cK)$ by mapping $T$ to the nilpotent matrix 
		\[ N=  \begin{pmatrix}
 0 & 1 & 0 & \cdots & 0 \\ \vdots & \ddots & \ddots & \ddots & \vdots  \\ \vdots && \ddots & \ddots & 0 \\ \vdots && & \ddots & 1 \\
 0 &  \cdots &\cdots &\cdots & 0
\end{pmatrix}. \]
Thus, the $\cK$-algebra embedding above allows us to identify $\rP{m}(G)(B)$ with the subgroup of $\GL_{r(m+1)}(B)$ of matrices
	of the  block form
	\begin{equation}
    \begin{pmatrix}
			X_0 & X_1 & \dots&X_m \\
			&    \ddots     & \ddots&\vdots\\
			&&\ddots&X_1\\
			&&&X_0
		\end{pmatrix},
	\label{eq:block-matrix}
	\end{equation} 
	where the ideal of relations $I_{\rP{m}(G)}\subseteq \cK[Y_0,\ldots, Y_m,1/\det(Y_0)]$ is generated by all $S_\ell$, $0\leq \ell \leq k$ as in \eqref{E:QS} for each $Q \in I_G$. 
    Of course, $\cK[Y_0,\ldots, Y_m,1/\det(Y_0)]/I_{\rP{m}(G)}$ is exactly the description of $HS^n_{\cK[Y,1/\det(Y)]/(\cK,\hde{*})}$ given in Remark \ref{rem:explicit-description-of-HS}.

\begin{Example}
Suppose $r=2$, $m=1$, and $I_G \subseteq \cK[Y_{ij}, 1/\det(Y)]$ is defined by the polynomials 
\[
Q_1=Y_{11}^2+aY_{21}^2 -1, \, Q_2=Y_{11}Y_{12}+aY_{21}Y_{22}, \text{ and } Q_3=Y_{12}^2+aY_{22}^2 -a,
\]
for some fixed $a\in \cK$, giving the condition
\[
X^{\tr}\begin{pmatrix}
1 & 0\\
0 & a
\end{pmatrix} X - \begin{pmatrix}
1 & 0\\
0 & a
\end{pmatrix}=0 \text{ for all } X\in G(B).
\]
Then, 
\begin{align*}
Q_1^\D(Y_0+ Y_1T) &= \bigl((Y_0)_{11} + (Y_1)_{11}T\bigr)^{2} + \bigl(a + \hd{1}{a}T\bigr)\bigl((Y_0)_{21} + (Y_1)_{21}T\bigr)^2 -1 \\
&\equiv \Bigl((Y_0)_{11}^{\,2}+a(Y_0)_{21}^{\,2}-1\Bigr) \\
& \qquad+ \Bigl(2(Y_0)_{11}(Y_1)_{11}+2a(Y_0)_{21}(Y_1)_{21} +\hd{1}{a}(Y_0)_{21}^{\,2}\Bigr)T \mod{T^2},\\
&\\
Q_2^\D(Y_0+ Y_1T) &= \bigl((Y_0)_{11}+(Y_1)_{11}T\bigr)\bigl((Y_0)_{12}+(Y_1)_{12}T\bigr) \\
&\qquad +\bigl(a +\hd{1}{a}T\bigr)\bigl((Y_0)_{21}+(Y_1)_{21}T\bigr)\bigl((Y_0)_{22}+(Y_1)_{22}T\bigr)\\
&\equiv \Bigl((Y_0)_{11}(Y_0)_{12} + a(Y_0)_{21}(Y_0)_{22}\Bigr) \\
&\qquad + \Bigl((Y_0)_{11}(Y_1)_{12}+(Y_1)_{11}(Y_0)_{12}+a(Y_1)_{21}(Y_0)_{22}\\ &\hspace{1.3cm} + a(Y_0)_{21}(Y_1)_{22} + \hd{1}{a}(Y_0)_{21}(Y_0)_{22}\Bigr) T
\mod{T^2},\\
& \\
Q_3^\D(Y_0+ Y_1T) &= \bigl((Y_0)_{12} + (Y_1)_{12}T\bigr)^{2} + \bigl(a + \hd{1}{a}T\bigr)\bigl((Y_0)_{22} + (Y_1)_{22}T\bigr)^2 - \bigl(a + \hd{1}{a}T\bigr) \\
&\equiv \Bigl((Y_0)_{12}^{\,2}+a(Y_0)_{22}^{\,2}-a\Bigr)\\
&\qquad + \Bigl(2(Y_0)_{12}(Y_1)_{12}+2a(Y_0)_{22}(Y_1)_{22}
+\hd{1}{a}(Y_0)_{22}^{\,2}- \hd{1}{a}\Bigr)T \mod{T^2},
\end{align*}
Then, $I_{\rP{1}(G)} \subseteq \cK[Y_0, Y_1, 1/\det(Y_0)]$ is generated by 
\begin{align*}
& (Y_0)_{11}^{\,2}+a(Y_0)_{21}^{\,2}-1, \\
&(Y_0)_{11}(Y_0)_{12} + a(Y_0)_{21}(Y_0)_{22},\\
&(Y_0)_{12}^{\,2}+a(Y_0)_{22}^{\,2}-a, \\
 & 2(Y_0)_{11}(Y_1)_{11}+2a(Y_0)_{21}(Y_1)_{21} +\hd{1}{a}(Y_0)_{21}^{\,2}, \\
&(Y_0)_{11}(Y_1)_{12}+(Y_1)_{11}(Y_0)_{12}+a(Y_1)_{21}(Y_0)_{22}+ a(Y_0)_{21}(Y_1)_{22} + \hd{1}{a}(Y_0)_{21}(Y_0)_{22}, \text{ and } \\
&2(Y_0)_{12}(Y_1)_{12}+2a(Y_0)_{22}(Y_1)_{22} +\hd{1}{a}(Y_0)_{22}^{\,2}- \hd{1}{a}.
\end{align*}

\end{Example}

\section{Zariski density of prolongated subgroups}\label{S:Zdensity}

In this section, we study prolongated subgroups $\prol{m}(\Delta)$ defined in Definition \ref{def:gammatohyper}. The aim is to give a sufficient criterion on $\Delta\subseteq G(\cK)$ to ensure that its prolongated subgroup is Zariski dense in the prolongation $\rP{m}(G)$.

For stating the criterion, we need $\cK$ to be a complete nonarchimedean field, and we make use of the induced topologies on the groups $G(\cK)$ and $\rP{m}(G)(\cK)$.

The family of hyperdifferential operators on $\cK$ is only needed in Subsection~\ref{SubS:procongsub}. 

For the whole section, let $\cK$ be a complete discrete nonarchimedean local field, $\cO\subset \cK$ its valuation ring, $\fm\subset \cO$ its maximal ideal, as well as $\ck=\cO/\fm$ the corresponding residue field. In addition, we let $\varpi\in \fm$ be a uniformizer, i.e.,~a generator of the ideal $\fm$. As $\cK$ is assumed to have positive characteristic, one has an isomorphism of $\ck$-algebras $\cO\cong \ck\ps{\varpi}$.

\subsection{Congruence subgroups}\label{SubS:congruenceSubgroups}

Before we define congruence subgroups, we state a lemma that will be used in the proof of Lemma~\ref{lem:isomorphism-to-Lie-algebra}.

\begin{Lemma}\label{lem:lifting-of-homomorphisms}
    Let $\cA$ be a finitely generated $\cO$-algebra such that $\cA_\cK:=\cK\otimes_\cO \cA$ is a regular $\cK$-algebra, and let $\cM$ be a relatively maximal ideal of degree $1$ (i.e.,~the kernel of some $\cO$-algebra homomorphism $\cA\to \cO$).
    Then there exists $N_0\geq 0$ such that for all $N\geq N_0$, and any homomorphism $\varphi: \cA\to \cO/\fm^{2N}$ satisfying $\varphi(\cM)\subseteq \fm^N\cO/\fm^{2N}$, there is a homomorphism $\tilde{\varphi}:\cA\to \cO$, such that for all $a\in \cA$, we have $\tilde{\varphi}(a) \equiv \varphi(a)\mod \fm^{N+1} $.    
\end{Lemma}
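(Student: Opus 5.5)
The plan is to reduce the statement to a multivariable Hensel (Newton–Tougeron) lemma, after choosing coordinates in which $\cM$ corresponds to the origin.

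\emph{Setting up coordinates.} Choose a presentation $\cA=\cO[x_1,\dots,x_s]/(f_1,\dots,f_m)$. The $\cO$-point $\cA\to\cO$ with kernel $\cM$ sends $x_i$ to some $a_i\in\cO$. Replacing $x_i$ by $x_i-a_i$, we may assume this point is the origin and that $\cM=(x_1,\dots,x_s)\cA$. A homomorphism $\varphi:\cA\to\cO/\fm^{2N}$ with $\varphi(\cM)\subseteq\fm^N/\fm^{2N}$ is then given by a tuple $y\in(\varpi^N\cO)^s$ with $f_j(y)\equiv 0\bmod \varpi^{2N}$ for all $j$. A lift $\tilde\varphi$ is a tuple $\tilde y\in\cO^s$ with $f_j(\tilde y)=0$ for all $j$ and $\tilde y\equiv y\bmod\varpi^{N+1}$.

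\emph{Local structure at the origin.} The origin is a $\cK$-rational point of $\Spec\cA_\cK$, and the local ring there is regular, say of dimension $d$. A regular local ring at a \emph{rational} point is smooth there, even though $\cK$ is imperfect, because the residue field is $\cK$ itself. Set $c=s-d$. By the Jacobian criterion, after renumbering, the minor $\Delta=\det(\partial f_i/\partial x_k)_{1\le i,k\le c}$ satisfies $\Delta(0)\neq0$, say of valuation $e$. Moreover, there is a polynomial $g\in\cO[x]$ with $g(0)\neq 0$ such that, in $\cK[x]_g$, the ideal $(f_1,\dots,f_m)$ equals $(f_1,\dots,f_c)$. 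Indeed, both ideals define, near the origin, the irreducible smooth germ of dimension $d$, so they agree after localizing. Clearing denominators, for each $j>c$ we get $g^{a}\varpi^{b}f_j\in(f_1,\dots,f_c)\cO[x]$ for suitable exponents $a,b$. This step is the main obstacle: the careful justification that $(f_1,\dots,f_c)$ generates the whole ideal locally. I would prove it by comparing the two ideals in the regular local ring $\cO_{\Spec\cK[x],0}$. There $f_1,\dots,f_c$ form part of a regular system of parameters, so $\cK[x]_{(x)}/(f_1,\dots,f_c)$ is a regular local ring of dimension $d$, hence a domain. It surjects onto the $d$-dimensional ring $(\cA_\cK)_{\cM}$, so the kernel is a prime of height $0$ in a domain, i.e. zero. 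This yields the element $g$.

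\emph{Newton step and conclusion.} Take $N_0>\max(e,\,v(g(0)))$ and let $N\ge N_0$. Since $y\equiv 0\bmod\varpi^N$, we have $\Delta(y)\equiv\Delta(0)\bmod\varpi^N$, so $v(\Delta(y))=e$; similarly $v(g(y))=v(g(0))$. We also have $v(f_i(y))\ge 2N>2e$ for $i\le c$. Apply the multivariable Hensel lemma, varying only $x_1,\dots,x_c$ and keeping $x_{c+1},\dots,x_s$ fixed. It produces $\tilde y\in\cO^s$ with $f_1(\tilde y)=\dots=f_c(\tilde y)=0$ and
\[
\tilde y\equiv y \bmod \varpi^{2N-e}.
\]
Since $2N-e\ge N+1$, this is the required congruence. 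Finally, for $j>c$ the relation $g^a\varpi^b f_j\in(f_1,\dots,f_c)$ gives $g(\tilde y)^a\varpi^b f_j(\tilde y)=0$. Because $g(\tilde y)\equiv g(y)\bmod\varpi^{N+1}$ and $N+1>v(g(0))$, we have $g(\tilde y)\neq0$, and therefore $f_j(\tilde y)=0$. Hence $\tilde y$ defines $\tilde\varphi:\cA\to\cO$ with $\tilde\varphi\equiv\varphi\bmod\fm^{N+1}$.
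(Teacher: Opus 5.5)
Your proof is correct, and it takes a genuinely different route from the paper's.

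\textbf{The paper's route.} It argues formally. It chooses a Noether normalization $\xi_1,\dots,\xi_d\in\cM$, with the remaining generators $y_i$ integral over $\cO[\xi_1,\dots,\xi_d]$, and identifies the completion of $\cA_\cK$ at $\cM$ with $\cK\ps{\xi_1,\dots,\xi_d}$. It then lifts the induced map $\cO\ps{\xi_1,\dots,\xi_d}\to\cO/\fm^{2N}$ to $\cO$ simply by lifting the images of the $\xi_j$, since a power series ring has no relations. Finally it restricts back to $\cA$, losing a fixed power $\varpi^l$ that comes from writing $\cO$-generators of $\cA$ inside $\frac{1}{\varpi^l}\cO[\xi,y]$; this gives $N_0=l+1$.

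\textbf{Your route.} You keep an explicit presentation throughout. You use the Jacobian criterion at the rational point, then your height-zero argument showing that $(f_1,\dots,f_c)$ and $(f_1,\dots,f_m)$ agree near the origin, and then Newton's method. This gives you an explicit $N_0$ in terms of $v(\Delta(0))$ and $v(g(0))$, and the sharper congruence modulo $\fm^{2N-e}$. It also means you never need to control power series expansions. The price is the ideal comparison, and you handle it correctly.

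\textbf{Where the paper's route needs care.} It needs extra care at exactly the points your approach avoids. First, the $\xi_j$ must be chosen to form a regular system of parameters at $\cM$ for the completion to be $\cK\ps{\xi_1,\dots,\xi_d}$; for example, $\xi=x^2$ in $\cK[x]$ is a valid Noether normalization but does not work. Second, integrality over $\cO[\xi]$ does not by itself force $y_i\in\cO\ps{\xi_1,\dots,\xi_d}$. For $\cA=\cO[\xi,y]/(y^2+\varpi y+\xi)\cong\cO[y]$, the element $\xi$ is even a regular parameter at $\cM=(\xi,y)$, yet one gets $y=-\xi/\varpi-\xi^2/\varpi^3-\cdots$. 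So that route must additionally bound the denominators of these expansions.
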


\begin{proof}
By Noether normalization for $\cA_\cK$, there are algebraically independent elements $\xi_1,\ldots, \xi_d\in \cM\cA_\cK$, and elements $y_1,\ldots, y_s\in \cA_\cK$ that are integral over $\cK[\xi_1,\ldots, \xi_d]$, such that
$\cA_\cK$ is generated as a $\cK$-algebra by $\xi_1,\ldots, \xi_d,y_1,\ldots, y_s$.
Without loss of generality, $y_1,\ldots,y_s\in \cM\cA_\cK$.

By rescaling, we can further assume $\xi_1,\ldots, \xi_d\in \cM\subseteq \cA$, and $y_1,\ldots,y_s$ are in $\cA$, and that they are integral over $\cO[\xi_1,\ldots, \xi_d]$.

As $\cA_\cK$ is regular and $\cA_\cK/(\cM) \cong \cK$, the completion of $\cA_\cK$ with respect to $(\cM)$ is isomorphic to $\cK\ps{\xi_1,\ldots, \xi_d}$, and since $y_i$ is integral over $\cO[\xi_1,\ldots,\xi_d]$, we have $y_i\in \cO\ps{\xi_1,\ldots, \xi_d}$.

We now fix $\cO$-generators $x_1,\ldots, x_n$ for $\cA$ that lie in $\cM$. Then there is some integer $l\geq 0$ such that or all $1\leq k\leq n$, we have $x_k\in \frac{1}{\varpi^l} \cO[\xi_1,\ldots, \xi_d,y_1,\ldots, y_s]$. In particular, this induces an embedding $\cA\subseteq \cK\otimes \cO\ps{\xi_1,\ldots,\xi_d}$.

We claim that $N_0:=l+1$ satisfies the condition in the statement.

Indeed, let $N\geq l+1$, and let $\varphi: \cA\to \cO/\fm^{2N}$ be a homomorphism with $\varphi(\cM)\subseteq \fm^N\cO/\fm^{2N}$.
In particular, $\varphi(\xi_j)\in \fm^N\cO/\fm^{2N}$ for all $j=1,\ldots, d$. Hence, $\varphi$ induces a continuous homomorphism $\hat{\varphi}:\cO\ps{\xi_1,\ldots, \xi_d}\to \cO/\fm^{2N}$ which can be lifted continuously to
$\tilde{\hat{\varphi}}:\cO\ps{\xi_1,\ldots, \xi_d}\to \cO$.
So, for all $j=1,\ldots, d$, $\tilde{\hat{\varphi}}(\xi_j)\equiv \varphi(\xi_j) \mod{\fm^{2N}}$, and for all $i=1,\ldots, s$, 
$\tilde{\hat{\varphi}}(y_i)\equiv \varphi(y_i) \mod{\fm^{2N}}$.

We define $\tilde{\varphi}:\cA\to \cK$ to be the restriction of the scalar extension $\cK\otimes \tilde{\hat{\varphi}}:\cK\otimes \cO\ps{\xi_1,\ldots,\xi_d}\to \cK$.

Since, $\varpi^l x_k\in \cO[\xi_1,\ldots, \xi_d,y_1,\ldots, y_s]\cap \cM$ for all $k=1,\ldots,n$, we obtain
\[\tilde{\varphi}(\varpi^l x_k)\equiv \varphi(\varpi^l x_k) \mod \fm^{2N},
\]
and therefore using $\fm^{N+1}\supseteq \fm^{2N-l}$, we get
\[ \tilde{\varphi}(x_k)\equiv \varphi(x_k) \mod \fm^{N+1}.
\]
As $x_1,\ldots, x_n$ generate $\cA$, this proves the claim.
\end{proof}

For the rest of this section, 
let $G/\cK$ be a smooth affine group scheme with a fixed closed embedding $G\subseteq \GL_r$ into the general linear group.
For our statement, we need congruence subgroups that we define using the embedding in $\GL_r$.

We define the following subgroups of $G(\cK)$:
\[  G(\cO) := G(\cK)\cap \GL_r(\cO)\subseteq \GL_r(\cK), \]
and for any ideal $\fa$ of $\cO$,
\[    G(\fa) := \ker\Bigl( G(\cO)\to \GL_r(\cO/\fa)\Bigr)= \bigl\{ g\in G(\cO) \mid  g\equiv \one_r \mod \fa \bigr\}. \] 

Further, using the induced embedding $\Lie(G)(\cK)\subseteq \Mat_r(\cK)$ obtained from $G\subseteq \GL_r$, we define the subspace
\[  \Lie(G)(\cO) := \Lie(G)(\cK)\cap \Mat_r(\cO)\subseteq \Mat_r(\cK). \] 
As $\Lie(G)(\cK)$ is a vector subspace of $\Mat_r(\cK)$, $\Lie(G)(\cO)$ is a free submodule of $\Mat_r(\cO)$ with
\[  \rk_{\cO}(\Lie(G)(\cO)) = \dim_{\cK}(\Lie(G)(\cK)).\]

\begin{Remark}
    We will later apply this to the case of $\cK=\bK_\fp$, $\cO=\bA_\fp$, where $\fp$ is a non-zero prime ideal of $\bA=\Fq[t]$, and $G$ will be the Galois group of a $t$-motive, respectively, of its rigid analytic trivialization.
\end{Remark} 

\begin{Lemma}\label{lem:isomorphism-to-Lie-algebra}
    For all $i\geq 0$, we have injective homomorphisms
    \[ G(\fm^i)/G(\fm^{i+1}) \longrightarrow \Lie(G)(\cO)\otimes_\cO (\cO/\fm).\]
    Furthermore, there exists $N_0\geq 0$ (depending on $G\subseteq \GL_r$) such that for all $i\geq N_0$, these homomorphisms are isomorphisms.

    In particular, for all $i\geq 0$,
    \[   \dim_{\kappa_\fm}\left(G(\fm^i)/G(\fm^{i+1})\right) \leq  \dim_\cK(\Lie(G))=\dim(G) \]
    with equality if $i\geq N_0$.
\end{Lemma}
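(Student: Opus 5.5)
The natural map is $g \mapsto (g-\one_r)/\varpi^i \bmod \fm$. I will define it, check that it is an injective homomorphism landing in $\Lie(G)(\cO)\otimes_\cO(\cO/\fm)$, and then use Lemma~\ref{lem:lifting-of-homomorphisms} to show it is surjective for large $i$.

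For $i\ge 1$ and $g\in G(\fm^i)$, write $g=\one_r+\varpi^i X$ with $X\in\Mat_r(\cO)$, and set $\bar g := X \bmod \fm$. For two elements we have
\[(\one_r+\varpi^iX)(\one_r+\varpi^iY)=\one_r+\varpi^i(X+Y)+\varpi^{2i}XY,\]
and $2i\ge i+1$, so the map is a homomorphism into $\Mat_r(\cO/\fm)$. Its kernel is exactly $G(\fm^{i+1})$, which gives injectivity. The case $i=0$ needs separate handling: there I would interpret $G(\fm^0)/G(\fm)$ via reduction mod $\fm$, or simply ignore it, since only large $i$ matter later. It may be that the paper's convention makes $i=0$ trivial; I would check this.

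To see that the image lies in the Lie algebra, reduce $g$ modulo $\fm^{i+1}$. Because the defining ideal $I_G$ is generated by polynomials with $\cO$-coefficients (after scaling), for every $Q\in I_G\cap\cO[Y,1/\det]$ we get $0=Q(\one_r+\varpi^iX)\equiv \varpi^i\, dQ_{\one}(X) \bmod \fm^{i+1}$. Hence $dQ_{\one}(\bar g)=0$ over $\cO/\fm$. The delicate point is identifying this with $\Lie(G)(\cO)\otimes\cO/\fm$. The obstacle is that the reduction of the tangent space of the $\cO$-model could be larger than $\Lie(G)(\cO)\otimes\cO/\fm$.

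I would avoid this issue by a saturation argument. Choose $\cO$-generators $Q_1,\dots,Q_s$ of $I_G\cap\cO[Y,1/\det]$. The $\cO$-module $\{X\in\Mat_r(\cO): dQ_{j,\one}(X)=0\ \forall j\}$ equals $\Lie(G)(\cO)$: it is saturated, and its tensor with $\cK$ is $\Lie(G)(\cK)$. The reduction of the defining conditions modulo $\fm$ may have a bigger solution space. So I would instead define the map using $X$ modulo $\fm$, and show $X\in\Lie(G)(\cO)+\fm\Mat_r(\cO)$ by lifting $g$ to an honest tangent vector; this uses the same lifting lemma.

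For surjectivity with $i\ge N_0$, apply Lemma~\ref{lem:lifting-of-homomorphisms} to $\cA=\cO[Y,1/\det]/(I_G\cap\cO[\ldots])$, which is regular over $\cK$ because $G$ is smooth, and to $\cM$ the kernel of evaluation at $\one_r$. Given $v\in\Lie(G)(\cO)$, the point $\one_r+\varpi^i v$ defines a homomorphism $\varphi:\cA\to\cO/\fm^{2i}$: the tangent condition kills first-order terms, but higher-order terms survive only modulo $\fm^{2i}$. This is precisely why the lemma is stated with $\fm^{2N}$, although I would need to check that first-order vanishing suffices, i.e.\ use the dual-numbers computation with $\varepsilon=\varpi^i$ and $\varepsilon^2\in\fm^{2i}$. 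Also $\varphi(\cM)\subseteq\fm^i$. The lemma then yields $\tilde\varphi:\cA\to\cO$, i.e.\ $g\in G(\cO)$ with $g\equiv\one_r+\varpi^iv \bmod \fm^{i+1}$, so $g\in G(\fm^i)$ maps to $\bar v$. The same lift shows the image of every $g$ lies in $\Lie(G)(\cO)\otimes\cO/\fm$ when $i$ is large. For small $i$ I would bound only the dimension, which follows from injectivity into the reduced tangent space together with the saturation argument.

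The dimension statements are then immediate, using $\rk_\cO\Lie(G)(\cO)=\dim G$.
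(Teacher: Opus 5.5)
You put your finger on the right difficulty but do not resolve it, and this is a genuine gap. What you do establish is correct. For $i\ge 1$ the map $\one_r+\varpi^iX\mapsto\bar X\in\Mat_r(\ck)$ is an injective homomorphism on $G(\fm^i)/G(\fm^{i+1})$. Your doubt about $i=0$ is justified: for $G=\GL_r$ with $r\ge 2$, the quotient $G(\cO)/G(\fm)\cong\GL_r(\ck)$ is non-abelian. Your surjectivity argument for $i\ge N_0$ is the paper's argument done more carefully. You feed Lemma~\ref{lem:lifting-of-homomorphisms} the $\cO$-algebra map $\cO[G]\to\cO[\epsilon]/(\epsilon^2)\to\cO/\fm^{2i}$ coming from an integral $v\in\Lie(G)(\cO)$. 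The paper instead composes a $\ck[\epsilon]/(\epsilon^2)$-point with $\epsilon\mapsto\varpi^i$, which is not $\cO$-linear.

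What is missing is a proof that the image lies in $\Lie(G)(\cO)\otimes_\cO\ck$; both the isomorphism and the dimension bound rest on this. Reducing $Q(\one_r+\varpi^iX)=0$ modulo $\fm^{i+1}$ only puts $\bar X$ in the tangent space at $\one$ of the special fibre of $\Spec\cO[G]$ (your $\cA$). The paper simply asserts that this space \emph{is} $\Lie(G)(\cO)\otimes_\cO\ck$, which is exactly what you doubt, and rightly so. Neither of your remedies works. Lemma~\ref{lem:lifting-of-homomorphisms} turns approximate points into $\cO$-points; it never turns a point into a tangent vector. Injectivity into the special-fibre tangent space together with saturation of $\Lie(G)(\cO)$ gives no bound by $\dim G$.

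In fact no argument can work for small $i$. Over $\cK=\FF_2\ls{\varpi}$, take $G=\{(x,y):\varpi^4y=x^2+\varpi^2x\}\subseteq\GG_a^2$. This is a smooth subgroup, isomorphic to $\GG_a$ via $x$; embed it in $\GL_3$ by $(x,y)\mapsto\one_3+xE_{12}+yE_{13}$. Then $\Lie(G)(\cO)=\cO\,(\varpi^2E_{12}+E_{13})$, so the target has two elements. On the other hand, $G(\fm^2)$ and $G(\fm^3)$ correspond to $x\in\{0,\varpi^2\}+\fm^4$ and $x\in\fm^5$, so $G(\fm^2)/G(\fm^3)$ has four elements. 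The natural map sends $x=\varpi^2$ and $x=\varpi^4$ to the classes of $E_{12}$ and $E_{13}$.

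For large $i$ the gap closes once you reduce modulo $\fm^{2i}$ instead of $\fm^{i+1}$. Since $Q(\one_r+\varpi^iX)=\varpi^i\,dQ_{\one}(X)+\varpi^{2i}R$ with $R$ integral, you get $dQ_{j,\one}(X)\in\fm^i$ for your generators $Q_1,\dots,Q_s$. The linear map $d\colon\Mat_r(\cO)\to\cO^s$, $X\mapsto(dQ_{j,\one}(X))_j$, has saturated kernel $\Lie(G)(\cO)$. Hence $\Mat_r(\cO)=\Lie(G)(\cO)\oplus C$ with $d|_C$ injective. Let $\varpi^e$ be the largest elementary divisor of $d|_C$. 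Then $d(X)\in(\fm^i)^s$ forces the $C$-component of $X$ into $\fm C$ whenever $i>e$. So for $i>e$ the image lies in $\Lie(G)(\cO)\otimes_\cO\ck$. Combined with your surjectivity, this gives isomorphisms, and the dimension equality, for $i\ge\max(N_0,e+1)$; in the example above $e=2$. That large-$i$ statement is what can actually be proved. The claims for all $i\ge 0$ cannot be.
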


\begin{proof}
The homomorphisms are given by sending the residue class of $\one_r+\varpi^i A\in G(\fm^i)$ with $A\in \Mat_r(\cO)$ to the residue class of $A$ modulo $ \fm$. For $G=\GL_r$, this is clearly an isomorphism. For general $G$, it is just the restriction of that isomorphism, but we have to work a bit more to show that it is an isomorphism for large enough $i$.

The closed embedding $G\subseteq \GL_r$ induces a surjective map $\cK[\GL_r]\to \cK[G]$ of the rings of regular functions, and hence an isomorphism $\cK[\GL_r]/I\cong \cK[G]$ for an appropriate ideal $I$ of $\cK[\GL_r]$. 
Let $I^{\res}:=I\cap \cO[\GL_r]$, and $\cO[G]:=\cO[\GL_r]/I^{\res}$, so that $G(\cO)\cong \Hom_\cO(\cO[G],\cO)$. Let $u\in\Hom_\cO(\cO[G], \cO)$ be the map corresponding to the unit in $G(\cO)$, as well as $\cM=\Ker(u)$, and recall that $\cO\cong \ck\ps{\varpi}$.
Furthermore, let $N_0$ be the number given by Lemma \ref{lem:lifting-of-homomorphisms} for $\cA=\cO[G]$, and we claim that this $N_0$ is the desired lower bound for $i$.

Consider the diagram,
\[ \xymatrix{
    G(\fm^i) \ar@{^{(}->}[r] \ar@{-->}[ddd]& \Hom_\cO\left( \cO[G], \cO\right) \ar[r] \ar[d]^{\mod{\fm^{2i}}}&  \Hom_\cO\left( \cO[G], \cO/\fm^i\right) \ar[d]^{\mod{\fm}}\\
  & \Hom_\cO\left( \cO[G], \ck\ps{\varpi}/(\varpi^{2i})\right) \ar[r]
 \ar[d]^{\mod{\fm^{i+1}}}& \Hom_\cO\left( \cO[G], \ck\right) \\
  & \Hom_\cO\left( \cO[G], \ck\ps{\varpi}/(\varpi^{i+1})\right) \ar[r] & \Hom_\cO\left( \cO[G], \ck\right)  \ar@{=}[u] \\
  \Lie(G)(\cO)\otimes_\cO (\cO/\fm) \ar@{^{(}->}[r] & \Hom_\cO\left( \cO[G], \ck[\epsilon]/(\epsilon^{2})\right) \ar[r] \ar[u]_{\epsilon\mapsto \varpi^i} 
 \ar@/^2pc/@<12ex>[uu]^{\epsilon\mapsto \varpi^i} & \Hom_\cO\left( \cO[G], \ck\right) \ar@{=}[u]\\
}.\]
The arrows on the right are all induced by homomorphisms on the target, and the labels indicate the maps.

By definition, $G(\fm^i)$ consists of those homomorphisms in $\Hom_\cO\left( \cO[G], \cO\right)$ which have the same image in $\Hom_\cO\left( \cO[G], \cO/\fm^i\right)$ as the homomorphism $u$.

By definition of the Lie algebra, and since $\Lie(G)(\cO)$ is a free $\cO$-module, $\Lie(G)(\cO)\otimes_\cO (\cO/\fm)$ consists of those homomorphisms $\Hom_\cO\left( \cO[G], \ck[\epsilon]/(\epsilon^{2})\right)$ which have the same image in $\Hom_\cO\left( \cO[G], \ck\right)$ as the homomorphism $u$.

\medskip

Now, if we are given $g\in G(\fm^i)$, its image in $\Hom_\cO\left( \cO[G], \ck\ps{\varpi}/(\varpi^{i+1})\right)$ in the third row is a homomorphism $\varphi$ with $\varphi(\cO[G])\subseteq \ck\ps{\varpi^i}/(\varpi^{i+1})$. In particular, $\varphi$ is in the image of the map given by $\epsilon\to \varpi^i$. Since, $g$ and $u$ have the same image in the right column, we obtain an element in $\Lie(G)(\cO)\otimes_\cO (\cO/\fm)$.
We also see that this element is $0$, i.e., that the corresponding element in $\Hom_\cO\left( \cO[G], \ck[\epsilon]/(\epsilon^{2})\right)$ equals $u$, if and only if $g\in G(\fm^{i+1})$. So the homomorphism
 \[ G(\fm^i)/G(\fm^{i+1}) \longrightarrow \Lie(G)(\cO)\otimes_\cO (\cO/\fm)\]
is injective.

Conversely, given an element $\lambda\in \Lie(G)(\cO)\otimes_\cO (\cO/\fm)$, we obtain a homomorphism $\varphi$ in the second row. Since $\lambda(\cM)\subseteq \epsilon \ck[\epsilon]/(\epsilon^2)$, we have $\varphi(\cM)\subseteq \varpi^i$. For $i\geq N_0$,
Lemma \ref{lem:lifting-of-homomorphisms} ensures that  we can find a homomorphism $\tilde{\varphi}\in \Hom_\cO\left( \cO[G], \cO\right)$ such that $\tilde{\varphi}(x)\equiv \varphi(x)\mod{\varpi^{i+1}}$ for all $x\in \cO[G]$.
So on one hand, this map $\tilde{\varphi}$ has the same image in $\Hom_\cO\left( \cO[G], \cO/\fm^i\right)$ as $u$, i.e., $\tilde{\varphi}\in G(\fm^i)$, and on the other hand, it also corresponds to $\lambda$ under the described map.
\end{proof}

\begin{Lemma}\label{lem:Zariski-dense}
	Assume that $G\subseteq \GL_r$ is connected, and let $N_0\geq 0$ satisfy the condition in Lemma \ref{lem:isomorphism-to-Lie-algebra}.
	Let $\Delta$ be a subgroup of $G(\cO)$ such that for some $j\geq N_0$, we have $\Delta\cdot G(\fm^{j+1})\supset G(\fm^j)$. Then $\Delta\subseteq G(\cK)$ is Zariski-dense in $G$. 
\end{Lemma}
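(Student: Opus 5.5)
Let $H\subseteq G$ denote the Zariski closure of $\Delta$ in $G$. It is a closed subgroup scheme defined over $\cK$, and we may take it reduced. Since $\cK$-points are dense in a closed subgroup generated by them, $H$ is smooth. The plan is to show $\dim H=\dim G$. Since $G$ is connected, this forces $H_0=G$ and hence $H=G$. The dimension of $H$ will be detected through the congruence filtration, compared via Lemma~\ref{lem:isomorphism-to-Lie-algebra} for both $H$ and $G$.

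\textbf{Step 1: the upper bound from $H$.} Lemma~\ref{lem:isomorphism-to-Lie-algebra} applied to $H\subseteq \GL_r$ gives, for every $i\ge 0$, an injection
\[
H(\fm^i)/H(\fm^{i+1})\hookrightarrow \Lie(H)(\cO)\otimes_\cO\ck,
\]
so $\dim_{\ck} H(\fm^i)/H(\fm^{i+1})\le \dim H$. This holds for all $i$, and we will use it at $i=j$.

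\textbf{Step 2: the lower bound from the hypothesis.} Let $g\in G(\fm^j)$. By hypothesis $g=\delta h$ with $\delta\in\Delta$ and $h\in G(\fm^{j+1})$. Then $\delta=gh^{-1}\in G(\fm^j)$, and $\delta\in\Delta\subseteq H(\cK)\cap\GL_r(\cO)=H(\cO)$, so $\delta\in H(\fm^j)$. The image of $g$ in $G(\fm^j)/G(\fm^{j+1})$ therefore equals the image of $\delta$. Consequently the natural map
\[
H(\fm^j)/H(\fm^{j+1})\to G(\fm^j)/G(\fm^{j+1})
\]
is surjective. It is also injective, since $H(\fm^{j+1})=H(\fm^j)\cap G(\fm^{j+1})$. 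Because $j\ge N_0$, Lemma~\ref{lem:isomorphism-to-Lie-algebra} for $G$ identifies the target with $\Lie(G)(\cO)\otimes\ck$, which has $\ck$-dimension $\dim G$. These maps are compatible with the inclusion $\Lie(H)\subseteq\Lie(G)$, as both are given by $\one_r+\varpi^jA\mapsto A \bmod \fm$.

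\textbf{Step 3: conclusion.} Combining Steps 1 and 2 gives $\dim G\le \dim_{\ck}H(\fm^j)/H(\fm^{j+1})\le\dim H\le \dim G$. Hence $\dim H=\dim G$, so $H_0=G$ because $G$ is connected and smooth, and therefore $H=G$.

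The main subtlety I expect is in Step 1. We need the injectivity part of Lemma~\ref{lem:isomorphism-to-Lie-algebra} for $H$, which requires $H$ to be smooth so that $\dim\Lie(H)=\dim H$. If $H$ were nonreduced, its Lie algebra could be too large and the bound would fail. So I will carefully justify that the Zariski closure of a group of $\cK$-rational points is a reduced, hence smooth, group over $\cK$. Since $\cK$ may be imperfect, I will take the closure as the reduced subscheme. This is geometrically reduced because its $\cK$-points are dense, and it is stable under multiplication by density. The injectivity statement only needs $\Lie(H)(\cO)=\Lie(H)(\cK)\cap\Mat_r(\cO)$, which holds for any closed subgroup.
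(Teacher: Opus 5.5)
\textbf{There is a genuine gap in Step 1, and it is the same one the paper's proof has.}

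Your argument follows the paper's own proof. You take the Zariski closure $H$ of $\Delta$, show that $H(\fm^j)/H(\fm^{j+1})\to G(\fm^j)/G(\fm^{j+1})$ is onto, and sandwich $\dim G\le\dim_{\ck}H(\fm^j)/H(\fm^{j+1})\le\dim H$. Your Step 2 is fine, and so is your justification that $H$ is smooth. The gap is the second inequality, i.e.\ Step 1, and it has nothing to do with smoothness.

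The map $\one_r+\varpi^jA\mapsto A\bmod\fm$ is trivially injective into $\Mat_r(\ck)$. However, nothing forces its image to lie in the reduction of $\Lie(H)(\cO)$ when $j$ is small relative to the integral structure of $H$. The hypothesis gives $j\ge N_0$ only for $G$, not for $H$. Your remark that ``the injectivity statement only needs $\Lie(H)(\cO)=\Lie(H)(\cK)\cap\Mat_r(\cO)$'' is exactly where this is missed. The claim in Lemma~\ref{lem:isomorphism-to-Lie-algebra} of an injection into $\Lie(G)(\cO)\otimes\ck$ for \emph{all} $i\ge 0$ (``This holds for all $i$'') is false in general. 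The paper's proof uses the same inequality at level $j$, so it has the same gap.

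In fact the statement fails as written. Let $\cK=\FF_2\ls{\varpi}$ and embed $G=\GG_a^2\subseteq\GL_4$ via $(x,z)\mapsto\one_4+xE_{12}+zE_{34}$.
\begin{itemize}
\item Then $G(\fm^i)=\fm^i\times\fm^i$ and $G(\fm^i)/G(\fm^{i+1})\cong\Lie(G)(\cO)\otimes\ck\cong\FF_2^2$ for every $i$, so any $j\ge 1$ satisfies $j\ge N_0$.
\item Fix such a $j$ and put $f(x)=\varpi^{-j-1}(x^2+\varpi^jx)$, which is additive in characteristic $2$. Let $H\subseteq G$ be its graph, a smooth one-dimensional closed subgroup, and let $\Delta:=H(\cK)\cap\GL_4(\cO)$.
\item Since $f(\varpi^j)=0$ and $f(\varpi^{j+1})=\varpi^j+\varpi^{j+1}$, the elements $(\varpi^j,0)$ and $(\varpi^{j+1},\varpi^j+\varpi^{j+1})$ lie in $\Delta\cap G(\fm^j)$. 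They map to a basis of $G(\fm^j)/G(\fm^{j+1})$, so $\Delta\cdot G(\fm^{j+1})\supseteq G(\fm^j)$.
\item Yet $\Delta\subseteq H(\cK)$, so $\Delta$ is not Zariski-dense in $G$.
\end{itemize}
Here $H(\fm^j)/H(\fm^{j+1})$ is $2$-dimensional, while $\Lie(H)(\cO)=\cO\,(\varpi E_{12}+E_{34})$ reduces to the line $\ck E_{34}$. This is precisely the failure of the bound in your Step 1.

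The repair is to require the covering condition at some $j\ge\max(N_0(G),N_0(H))$, for example for infinitely many $j$. Then Lemma~\ref{lem:isomorphism-to-Lie-algebra} can be applied to the smooth group $H$ at a level where it genuinely is an isomorphism, and your Steps 2--3 go through. Proposition~\ref{prop:rho-bar} supplies the covering at every sufficiently large $\ell$ divisible by a suitable $p$-power. So this strengthened lemma still yields Theorem~\ref{T:prolongated-subgroup-Zariski-dense}.
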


\begin{proof}
Let $H\subseteq G$ be the Zariski-closure of $\Delta$ in $G$, and as above $H(\cO):=H(\cK)\cap \GL_r(\cO)$, $H(\fm^i):=H(\cK)\cap \GL_r(\fm^i)$.
This means that for all $l\geq 0$, we have $H(\fm^l)\cap G(\fm^{l+1})=H(\fm^{l+1})$.

For $j\geq N_0$ such that $\Delta\cdot G(\fm^{j+1})\supset G(\fm^j)$, we obtain
\[ G(\fm^j)=\left(\Delta\cdot G(\fm^{j+1})\right)\cap G(\fm^j)=\left(\Delta\cap G(\fm^j)\right)\cdot G(\fm^{j+1})
\subseteq H(\fm^j)\cdot G(\fm^{j+1}).\]

Therefore,
\[  G(\fm^j)/G(\fm^{j+1})\subseteq \left(H(\fm^j)\cdot G(\fm^{j+1})\right)/ G(\fm^{j+1})
\cong H(\fm^j)/\left(H(\fm^j)\cap G(\fm^{j+1})\right)=H(\fm^j)/H(\fm^{j+1}).\]

In particular using Lemma \ref{lem:isomorphism-to-Lie-algebra},
\[ \dim(G) = \dim_{\ck}\left( G(\fm^j)/G(\fm^{j+1})\right) \leq \dim_{\ck}\left( H(\fm^j)/H(\fm^{j+1})\right) \leq \dim(H).\]
As $H$ is a subgroup of $G$, this implies that the dimensions are equal, and as $G$ is connected, $H=G$.
\end{proof}

\begin{Definition}\label{def:Delta-bar}
Let $G$  be as above, and let $\Delta\subseteq G(\cO)$ be a subgroup.
For any $\ell>j\geq 0$, we set
\[ \overline{\Delta}_{j,\ell}:= \left(\Delta\cap G(\fm^j)\right)/G(\fm^\ell) \subseteq G(\fm^j)/G(\fm^\ell). \]
\end{Definition}

Using this definition, Lemma \ref{lem:Zariski-dense} can then be rephrased as that $\Delta$ is Zariski-dense in $G$, if for some
$j\geq N_0$, we have $\overline{\Delta}_{j,j+1}= G(\fm^j)/G(\fm^{j+1})$.

\subsection{Prolongated congruence subgroups}\label{SubS:procongsub}

In addition to the assumptions on $\cK$ from the beginning of the section, we now assume that $\cK$ is equipped with a higher derivation $(\hde{n})_{n\geq 0}$ which restricts to a higher derivation on $\cO$. 

We start with an easy observation.

\begin{Lemma}\label{lem:congruence-mod-fm}
For all $0\leq n< m$, one has
\[ \hd{n}{\varpi^m} \equiv 0 \mod \fm, \]
and
\[ \hd{m}{\varpi^m} \equiv \hd{1}{\varpi}^m \mod \fm. \] 
\end{Lemma}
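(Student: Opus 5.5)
The plan is to expand $\hd{n}{\varpi^m}$ with the generalized Leibniz rule and read off which terms survive modulo $\fm$. Iterating rule (b) of Definition~\ref{D:hyperder} (by induction on $m$, or equivalently using that the Taylor series map $\D$ is a ring homomorphism, so $\D(\varpi^m)=\D(\varpi)^m$), we get
\[ \hd{n}{\varpi^m} = \sum_{\substack{i_1+\dots+i_m=n\\ i_1,\ldots,i_m\geq 0}} \hd{i_1}{\varpi}\cdots \hd{i_m}{\varpi}. \]
Every factor lies in $\cO$, because the higher derivation is assumed to restrict to one on $\cO$.

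For $n<m$, in each summand at least one index $i_k$ must be $0$, since otherwise $n=\sum i_k\geq m$. The corresponding factor is then $\hd{0}{\varpi}=\varpi\in\fm$. As the other factors lie in $\cO$, every summand lies in $\fm$, and hence so does the sum. This gives the first congruence.

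For $n=m$, a summand with all $i_k\geq 1$ must have all $i_k=1$, so the tuple $(1,\dots,1)$ contributes exactly $\hd{1}{\varpi}^m$. Every other tuple summing to $m$ contains some $i_k=0$ and so lies in $\fm$ by the same argument. This gives $\hd{m}{\varpi^m}\equiv\hd{1}{\varpi}^m \bmod \fm$.

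There is no real obstacle. The only points needing care are the multinomial form of the Leibniz rule, which follows immediately from $\D$ being multiplicative, and the standing assumption that $\hde{k}(\cO)\subseteq\cO$, which ensures that the non-$\varpi$ factors are integral.
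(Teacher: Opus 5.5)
Your proof is correct and follows the same route as the paper. The paper only says the lemma follows from the product rule and leaves the details to the reader; your multinomial expansion via $\D(\varpi^m)=\D(\varpi)^m$, together with the standing assumption $\hde{k}(\cO)\subseteq\cO$, supplies exactly those details.
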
	
\begin{proof}
This follows by using the definition of hyperdifferentiation  and the product rule $\hd{n}{fg} = \sum_{i=0}^j \hd{i}{f}\hd{j-i}{g}$ in a straightforward manner and therefore, we leave this task to the reader.
\end{proof}

As in Subsection~\ref{SubS:congruenceSubgroups}, we let $G/\cK$ be a linear algebraic group with a fixed embedding $G\subseteq \GL_r$. Since $\cK$ is equipped with a higher derivation, we have the prolongations $\rP{m}(G)$ of $G$, and the group homomorphisms $\prol{m}:~G(\cK)\to \rP{m}(G)(\cK)$ (see Definition \ref{def:gammatohyper}). 

We use the induced embedding $\rP{m}(G)\subseteq \GL_{r(m+1)}$ described in Section \ref{SubS:prolongation-as-subgroup-of-GL}, as well as
$\rP{m}(G)(\cO)=\rP{m}(G)(\cK)\cap \GL_{r(m+1)}(\cO)$ 
and $\rP{m}(G)(\fa)$ defined in the previous Section \ref{SubS:congruenceSubgroups} for arbitrary embedded linear algebraic groups.

\begin{Lemma}\label{lem:restriction-of-prol-map}
    Using $G(\cO)$ and $\rP{m}(G)(\cO)$ as above,
    the homomorphism $\prol{m}:~G(\cK)\to \rP{m}(G)(\cK)$ restricts to a homomorphism
    \[ \prol{m}:~G(\cO)\to \rP{m}(G)(\cO).\]
\end{Lemma}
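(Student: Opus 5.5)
The claim follows directly from the explicit block-matrix form \eqref{eq:gammatohyper} of $\prol{m}$, together with the standing assumption of this subsection that the higher derivation $(\hde{n})_{n\geq 0}$ on $\cK$ restricts to a higher derivation on $\cO$, i.e.\ $\hd{n}{\cO}\subseteq\cO$ for all $n\geq 0$.

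Let $X_0\in G(\cO)=G(\cK)\cap\GL_r(\cO)$. By Definition~\ref{def:gammatohyper}, $\prol{m}(X_0)$ lies in $\rP{m}(G)(\cK)$. Under the embedding $\rP{m}(G)\subseteq \GL_{r(m+1)}$ of Section~\ref{SubS:prolongation-as-subgroup-of-GL}, it is the block upper triangular matrix with diagonal blocks $X_0$ and off-diagonal blocks $\hd{i}{X_0}$, $1\leq i\leq m$. Each entry of $\hd{i}{X_0}$ is $\hde{i}$ applied to an entry of $X_0$, hence lies in $\cO$. So $\prol{m}(X_0)\in\Mat_{r(m+1)}(\cO)$.

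For invertibility over $\cO$, the determinant of this block triangular matrix is $\det(X_0)^{m+1}$. This is a unit in $\cO$ because $X_0\in\GL_r(\cO)$. Thus $\prol{m}(X_0)\in\GL_{r(m+1)}(\cO)$.

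Alternatively, one can use multiplicativity: $\prol{m}(X_0)^{-1}=\prol{m}(X_0^{-1})$, and $X_0^{-1}\in G(\cO)$, so the inverse is also integral by the argument above. Either way, $\prol{m}(X_0)\in\rP{m}(G)(\cK)\cap\GL_{r(m+1)}(\cO)=\rP{m}(G)(\cO)$.

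Since $\prol{m}$ is already a group homomorphism on $G(\cK)$, its restriction to $G(\cO)$ is a homomorphism into $\rP{m}(G)(\cO)$. The only point needing care is that $\cO$ is stable under all $\hde{i}$, and this is exactly the hypothesis; there is no real obstacle.
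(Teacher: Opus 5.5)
Your proof is correct and takes the same approach as the paper: you use the explicit block form \eqref{eq:gammatohyper} and the standing assumption that $\cO$ is stable under every $\hde{i}$ to get $\prol{m}(X_0)\in\Mat_{r(m+1)}(\cO)$. You also check explicitly that the matrix is invertible over $\cO$, via $\det(X_0)^{m+1}$ or via $\prol{m}(X_0)^{-1}=\prol{m}(X_0^{-1})$; the paper leaves this step implicit, and your argument for it is valid.
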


\begin{proof}
    By Formula \ref{eq:gammatohyper}, the homomorphism $\prol{m}$ is explicitly given by
   \[  \prol{m}(X)=\begin{pmatrix}
			X & \hd{1}{X} & \dots& \hd{m}{X} \\
			&    \ddots     & \ddots&\vdots\\
			&&\ddots&\hd{1}{X}\\
			&&&X
		\end{pmatrix} \]
        for $X\in G(\cK)$. Since, the higher derivation stabilizes $\cO$, for $X\in \GL_r(\cO)$, we have $\hd{k}{X}\in \Mat_r(\cO)$ for all $1\leq k \leq m$. Hence for $X\in G(\cO)=G(\cK)\cap \GL_r(\cO)$, we have $\prol{m}(X)\in \rP{m}(G)(\cK)\cap \GL_{r(m+1)}(\cO)=\rP{m}(G)(\cO)$.
\end{proof}

\begin{Proposition}\label{prop:rho-bar}
Let $\textrm{char}(\cK)=p>0$, let $m\geq 1$, and let $N_0\geq 0$ be the number given in Lemma \ref{lem:isomorphism-to-Lie-algebra} for $G$.
Let $\ell=p^e\cdot \ell'\in \NN$ with $p^e>m$, and $\ell \geq N_0$. 

Further let $\overline{\prol{m}}^\ell$ be the composition of $\prol{m}$ with a projection map
\[ \overline{\prol{m}}^\ell: G(\cO) \xrightarrow{\prol{m}} \rP{m}(G)(\cO) \to \rP{m}(G)(\cO)/\rP{m}(G)(\fm^{\ell+1}),\]
sending $X\in G(\cO)$ to the residue class of $\prol{m}(X)=X+\hd{1}{X}T+\ldots+ \hd{m}{X}T^m$.

Then the image of $G(\fm^\ell)$ under $\overline{\prol{m}}^\ell$ contains $\rP{m}(G)(\fm^\ell)/\rP{m}(G)(\fm^{\ell+1})$.
\end{Proposition}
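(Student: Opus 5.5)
The plan is a dimension count. Let $Q:=\rP{m}(G)(\fm^\ell)/\rP{m}(G)(\fm^{\ell+1})$. By Lemma~\ref{lem:isomorphism-to-Lie-algebra}, applied to the smooth group $\rP{m}(G)\subseteq\GL_{r(m+1)}$, the map $1+\varpi^\ell A\mapsto A \bmod \fm$ embeds $Q$ into $\Lie(\rP{m}(G))(\cO)\otimes_\cO\ck$. This target has $\ck$-dimension $\dim\rP{m}(G)=(m+1)\dim G$ by Proposition~\ref{prop:ses-and-dim-of-prolongations}. It therefore suffices to show two things: that $\overline{\prol{m}}^\ell(G(\fm^\ell))$ lies in $Q$, and that its image under this embedding is a $\ck$-subspace $W$ of dimension $(m+1)\dim G$. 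Then $W$ equals the whole target, and injectivity gives $Q\subseteq \overline{\prol{m}}^\ell(G(\fm^\ell))$.

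\emph{Step 1: $\prol{m}$ respects the congruence level $\ell$.} Write $\varpi^\ell=(\varpi^{\ell'})^{p^e}$. Hyperderivatives of a $p^e$-th power vanish in all orders not divisible by $p^e$, and $p^e>m$, so $\hd{i}{\varpi^\ell}=0$ for $0<i\leq m$. Hence for $g=\one_r+\varpi^\ell A\in G(\fm^\ell)$ with $A\in\Mat_r(\cO)$ the Leibniz rule gives
\[ \prol{m}(g)=\one+\varpi^\ell\bigl(A+\hd{1}{A}T+\dots+\hd{m}{A}T^m\bigr). \]
So $\prol{m}(g)\in\rP{m}(G)(\fm^\ell)$, and its class in $Q$ corresponds to $\sum_{k}\hd{k}{A}T^k \bmod \fm$. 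This is the only place where the hypothesis $\ell=p^e\ell'$ with $p^e>m$ enters.

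\emph{Step 2: a triangular family of elements.} Fix $0\le k\le m$ and $B\in\Lie(G)(\cO)$. Since $\ell+k\geq N_0$, Lemma~\ref{lem:isomorphism-to-Lie-algebra} gives $g\in G(\fm^{\ell+k})\subseteq G(\fm^\ell)$ with $g\equiv \one_r+\varpi^{\ell+k}B \bmod \fm^{\ell+k+1}$. In the notation of Step 1 this means $A=\varpi^kB+\varpi^{k+1}C$ with $C\in\Mat_r(\cO)$. By Lemma~\ref{lem:congruence-mod-fm} (with the Leibniz rule), $\hd{j}{A}\equiv 0 \bmod\fm$ for $j<k$, and $\hd{k}{A}\equiv \hd{1}{\varpi}^k B\bmod \fm$. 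Higher coefficients are uncontrolled, which is harmless.

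Here I will need $\hd{1}{\varpi}\in\cO^\times$. This holds in the intended application ($\cK=\bK_\fp$, $\varpi$ a generator of $\fp$, which is separable), and I would either check it or add it as a standing assumption. With it, the image of $G(\fm^\ell)$, which is a group, contains elements whose $T$-expansion starts in degree $k$ with an arbitrary element of $\Lie(G)(\cO)\otimes\ck$.

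\emph{Step 3: the triangular span.} Descending induction on $k$ (subtracting suitable group elements) shows that the image contains every $\sum_k \bar B_kT^k$ with $\bar B_k\in \Lie(G)(\cO)\otimes\ck$. Indeed, the degree-$k$ leading terms already range over the full $\ck$-space $\Lie(G)(\cO)\otimes\ck$, so no $\ck$-linearity of the group is needed. Thus $W\supseteq (\Lie(G)(\cO)\otimes\ck)\otimes\ck[T]/T^{m+1}$, which has dimension $(m+1)\dim G$, and the count above finishes the proof.

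The main obstacle is Step 2. It requires controlling $A$ modulo $\fm^{k+1}$, not just modulo $\fm$, while Lemma~\ref{lem:isomorphism-to-Lie-algebra} only lifts one level at a time. The triangular bookkeeping is meant to get around this, together with the unit condition on $\hd{1}{\varpi}$.
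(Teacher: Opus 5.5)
Steps 1 and 2 are fine. You are also right that $\hd{1}{\varpi}\in\cO^\times$ is needed. The paper uses it tacitly when it says the map agrees with the projection ``up to a non-zero scalar factor'', and for the trivial higher derivation the statement fails.

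\textbf{The gap is in Step 3.} After you subtract the Step~2 element $w$ with leading term $\bar B_kT^k$, the remainder has coefficients $\bar B_j-\overline{\hd{j}{A}}$ in degrees $j>k$. Nothing forces $\overline{\hd{j}{A}}\in V:=\Lie(G)(\cO)\otimes_\cO\ck$, so your induction hypothesis does not apply to the remainder. In fact the claimed inclusion $W\supseteq V\otimes_{\ck}\ck[T]/T^{m+1}$ is false in general. This space need not even lie in $U:=\Lie(\rP{m}(G))(\cO)\otimes_\cO\ck$, and $U\supseteq W$. Here is an example:
\begin{itemize}
\item Take $\cK=\Fq\ls{t}$ with $\varpi=t$ and hyperderivatives $\hdte{*}$, and take $m=1$.
\item Let $G=\{(x,y)\in\mathbb{G}_a^2\mid y=tx\}$, embedded block-diagonally in $\GL_4$ by unipotent $2\times 2$ blocks.
\item Then $\Lie(G)(\cO)=\cO\cdot(E_{12}+tE_{34})$, so $V=\ck E_{12}$.
\item The equation $Y_{34}=(t+T)Y_{12}$ defining $\rP{1}(G)$ gives $U=\{aE_{12}+(bE_{12}+aE_{34})T\mid a,b\in\ck\}$.
\item This $U$ does not contain $E_{12}$.
\end{itemize}
Your own Step~2 element for $k=0$ and $B=E_{12}+tE_{34}$ has expansion $E_{12}+E_{34}T$. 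Its degree-one coefficient is exactly the uncontrolled term.

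\textbf{How to repair the count.} Run the induction inside $U$, along the filtration $F_k$ by $T$-order (elements whose coefficients vanish in degrees $<k$).
\begin{itemize}
\item Step~2 shows that the leading coefficients of $W\cap F_k$ contain $V$ for every $k$. Since $W\subseteq U$, the same holds for $U\cap F_k$.
\item We have $\sum_{k=0}^m\dim_{\ck}(U\cap F_k)/(U\cap F_{k+1})=\dim U=(m+1)\dim G$. Hence each graded piece is exactly $V$.
\item Given $u\in U\cap F_k$, choose $w\in W\cap F_k$ with the same leading coefficient. Then $u-w\in U\cap F_{k+1}$, and descending induction gives $U\subseteq W$, since $W$ is an additive group.
\item Injectivity of $Q\hookrightarrow U$ then finishes the proof.
\end{itemize}

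\textbf{Comparison with the paper.} The paper never needs to know $U$. It inducts on $m$ and uses only the top-degree case of your Step~2. For $X\in G(\fm^{\ell+m})$, the class of $\prol{m}(X)$ has expansion $\hd{1}{\varpi}^m\bar Y\,T^m$, with nothing above it. These classes fill the kernel $\fK$ of $Q\to\rP{m-1}(G)(\fm^\ell)/\rP{m-1}(G)(\fm^{\ell+1})$. The induction hypothesis for $m-1$ matches the truncation, and this group-theoretic lifting never has to cancel higher-order coefficients.
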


\begin{proof}
We first show that $G(\fm^{\ell+m})$ is mapped via $\overline{\prol{m}}^\ell$ surjectively onto 
\begin{eqnarray*}
\fK & := & \ker\biggl(\rP{m}(G)(\fm^\ell)/\rP{m}(G)(\fm^{\ell+1})\xrightarrow{\overline{\pi}_{m,m-1}} \rP{{}m-1}(G)(\fm^\ell)/\rP{{}m-1}(G)(\fm^{\ell+1})\biggr)\hspace*{1cm}\\
 &&\hspace*{2cm} \cong
\Lie(G)(\fm^\ell)/\Lie(G)(\fm^{\ell+1})= \fm^\ell \cdot \left( \Lie(G)(\cO)/\Lie(G)(\fm)\right).
\end{eqnarray*}
Here $\overline{\pi}_{m,m-1}$ is the map induced by $\pi_{m,m-1}$ in Prop.~\ref{prop:ses-and-dim-of-prolongations}.

Every $X \in G(\fm^{\ell+m})$ is of the form $X=\mathbf{1}+\varpi^{\ell+m}Y$ with $Y\in \Mat(\cO)$. 

Note that for $1 \leq j \leq m$, we have $\hd{j}{\varpi^\ell}=0$, since $\ell$ is divisible by a $p$-power which is larger than~$j$. Then for $1 \leq r \leq m$,
\begin{eqnarray*}
\hd{r}{X} &=& \hd{r}{\varpi^{\ell+m}Y} = \varpi^{\ell} \cdot \hd{r}{\varpi^mY}\\
&=& \varpi^{\ell} \cdot \left( \sum_{n=0}^r \hd{n}{\varpi^m} \hd{r-n}{Y} \right). 
\end{eqnarray*}
By Lemma \ref{lem:congruence-mod-fm}, for $r<m$ all terms of the sum are divisible by $\varpi$, and for $r=m$,
the only term of the sum not divisible by $\varpi$ is $\hd{1}{\varpi}^mY$. 

Hence, $\hd{r}{X}\equiv 0 \mod \fm^{\ell+1}$ for $1\leq r<m$, and $\hd{m}{X}\equiv \varpi^{\ell} \hd{1}{\varpi}^mY\mod \fm^{\ell+1}$.

Hence, the image of $\overline{\prol{m}}^\ell(X)$ in $\rP{{}m-1}(G)(\fm^\ell)/\rP{{}m-1}(G)(\fm^{\ell+1})$ is $\mathbf{1}$, and
\[\overline{\prol{m}}^\ell(X)= \varpi^{\ell} \hd{1}{\varpi}^mY\in \Lie(G)(\fm^\ell)/\Lie(G)(\fm^{\ell+1}).\]
This also shows that up to a non-zero scalar factor, this map agrees with the projection map
\begin{align*}
G(\fm^{\ell+m})\to G(\fm^{\ell+m})/G(\fm^{\ell+m+1}) &\stackrel{\text{Lem.\ref{lem:isomorphism-to-Lie-algebra}}}{=}\Lie(G)(\fm^{\ell+m})/\Lie(G)(\fm^{\ell+m+1}) \\
& = \fm^{\ell+m} \cdot \Lie(G)(\cO)/\Lie(G)(\fm).
\end{align*} 
Hence, $G(\fm^{\ell+m})$ surjects onto $\fK$.

\medskip

We will now show by induction on $m$ that $\overline{\prol{m}}^\ell\left(G(\fm^\ell)\right)$ contains $\rP{m}(G)(\fm^\ell)/\rP{m}(G)(\fm^{\ell+1})$.

For $m=0$, the statement is trivial, as $\rP{0}(G)=G$, and $\overline{\prol{0}}^\ell$ is just the projection map.

For the induction step, we first recognize that $\overline{\pi}_{m,m-1}\circ \overline{\prol{m}}^\ell = \overline{\prol{m-1}}^\ell$.\\
Let $Z\in \rP{m}(G)(\fm^\ell)/\rP{m}(G)(\fm^{\ell+1})$. Then by induction hypothesis, there exists $X_1\in G(\fm^{\ell})$ with
$\overline{\prol{m-1}}^\ell(X_1)=\overline{\pi}_{m,m-1}(Z)\in \rP{{}m-1}(G)(\fm^\ell)/\rP{{}m-1}(G)(\fm^{\ell+1})$.
Therefore, $\overline{\prol{m}}^\ell(X_1)^{-1} \cdot Z \in \fK$, and by the first part of the proof, there is $X_2\in G(\fm^{\ell+m})$ such that $\overline{\prol{m}}^\ell(X_2)=\overline{\prol{m}}^\ell(X_1)^{-1} \cdot Z$.
Hence, $\overline{\prol{m}}^\ell(X_1X_2)=Z$.
\end{proof}

\begin{Remark}
If we look more closely into the proof above, we see that for the given choices of $m$ and $\ell$, the map $\overline{\prol{m}}^\ell$ induces an isomorphism
\[ \overline{\prol{m}}^\ell: G(\fm^\ell)/G(\fm^{\ell+m+1})\to \rP{m}(G)(\fm^\ell)/\rP{m}(G)(\fm^{\ell+1}).\]

\end{Remark}
	
\begin{Theorem}\label{T:prolongated-subgroup-Zariski-dense}
Let $G\subseteq \GL_r$, and $\Delta\subseteq G(\cO)$ be an $\fm$-adically open subgroup.
	Then for each $m\geq 1$, the group $\prol{m}\Delta:=\{ \prol{m}(X)\mid X\in\Delta \}$ is Zariski-dense in $\rP{m}(G)$.
\end{Theorem}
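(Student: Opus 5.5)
The plan is to reduce to Lemma~\ref{lem:Zariski-dense} applied to the group $\rP{m}(G)\subseteq \GL_{r(m+1)}$ and the subgroup $\prol{m}\Delta\subseteq \rP{m}(G)(\cO)$ (Lemma~\ref{lem:restriction-of-prol-map}), using Proposition~\ref{prop:rho-bar} to produce the required congruence surjectivity. First, I would deal with connectedness. Lemma~\ref{lem:Zariski-dense} needs a connected group. By Lemma~\ref{lem:connected-component}, $\rP{m}(G)_0=\rP{m}(G_0)$ and $\rP{m}(G)/\rP{m}(G)_0\cong G/G_0$. Since $\Delta$ is open, $\Delta\cap G_0(\cK)$ is open in $G_0(\cO)$. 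So it suffices to show two things:
\begin{enumerate}
\item[(i)] $\prol{m}(\Delta\cap G_0(\cK))$ is Zariski-dense in $\rP{m}(G_0)$;
\item[(ii)] $\Delta$ meets every coset of $G_0$ in $G$ that $\rP{m}(G)$ requires.
\end{enumerate}
Step (ii) is the delicate one. As stated, the theorem implicitly needs $\Delta$ to meet all components, since already for $m=0$ an open subgroup of $G(\cO)$ is dense only in a union of components. So I would either read the statement for connected $G$, or argue with the Zariski closure $H$ of $\Delta$. In the second reading the claim becomes that the closure of $\prol{m}\Delta$ is $\rP{m}(H)$, and the components are matched via the isomorphism $\rP{m}(G)/\rP{m}(G)_0\cong G/G_0$, which is compatible with $\prol{m}$ because $\pi_{m,0}\circ\prol{m}=\id$. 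Henceforth I assume $G$ is connected (and smooth, as in this section).

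Next I choose the level. Since $\Delta$ is $\fm$-adically open, there is $j_0$ with $G(\fm^{j_0})\subseteq\Delta$. Let $N_0'$ be the constant of Lemma~\ref{lem:isomorphism-to-Lie-algebra} for the embedded group $\rP{m}(G)\subseteq\GL_{r(m+1)}$; it is smooth since $G$ is. Let $N_0$ be the one for $G$. I pick $\ell=p^e\ell'$ with $p^e>m$ and $\ell\geq \max(N_0,N_0',j_0)$, for instance $\ell=p^e\cdot k$ with $k$ large.

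Then, by Proposition~\ref{prop:rho-bar}, $\overline{\prol{m}}^\ell(G(\fm^\ell))\supseteq \rP{m}(G)(\fm^\ell)/\rP{m}(G)(\fm^{\ell+1})$. One also needs $\prol{m}(G(\fm^\ell))\subseteq \rP{m}(G)(\fm^\ell)$. This holds because for $X=\one+\varpi^\ell Y$ and $1\le k\le m$ we have $\hd{k}{\varpi^\ell}=0$, so $\hd{k}{X}=\varpi^\ell\hd{k}{Y}\in\fm^\ell$. Since $G(\fm^\ell)\subseteq\Delta$, it follows that $\prol{m}\Delta\cdot\rP{m}(G)(\fm^{\ell+1})\supseteq\rP{m}(G)(\fm^\ell)$ with $\ell\geq N_0'$. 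Lemma~\ref{lem:Zariski-dense}, applied to the connected group $\rP{m}(G)$ (connected by Lemma~\ref{lem:connected-component}), then gives Zariski density.

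I expect the main obstacle to be the bookkeeping: ensuring that Lemma~\ref{lem:Zariski-dense} is applied with the constant $N_0'$ for $\rP{m}(G)$, not the one for $G$, and that $\rP{m}(G)$ is smooth and connected so that the lemma applies. The handling of non-connected $G$ described above is the other point needing care.
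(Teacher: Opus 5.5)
Your proof is essentially the paper's: in the connected case you choose $\ell$ divisible by a $p$-power exceeding $m$ with $G(\fm^\ell)\subseteq\Delta$, use Proposition~\ref{prop:rho-bar} to get $\prol{m}\Delta\cdot\rP{m}(G)(\fm^{\ell+1})\supseteq\rP{m}(G)(\fm^\ell)$, and finish with Lemma~\ref{lem:Zariski-dense} for the connected group $\rP{m}(G)$; your requirement that $\ell\geq N_0'$ (the constant for $\rP{m}(G)\subseteq\GL_{r(m+1)}$, not just for $G$) is exactly the bookkeeping the paper leaves implicit. Your caveat about components is also correct: the paper's reduction of the general case picks coset representatives of $G/G_0$ inside $\Delta$, which openness does not guarantee (e.g.\ $G=\{\pm1\}\subseteq\GL_1$ with $p$ odd and $\Delta=\{1\}$, or $\Delta=G(\fm^\ell)\subseteq G_0(\cK)$ for $\ell\gg 0$), so the statement genuinely needs $G$ connected or $\Delta$ meeting every component, which holds in the main application Theorem~\ref{T:galois-group-of-prolongation}, where $G=\Gamma_{\mot_E,0}$.
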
	

\begin{proof}
We first assume that $G$ is connected.

As $\Delta$ is $\fm$-adically open in $G(\cO)$, there is some $\ell\geq 0$ such that $\Delta\supset G(\fm^\ell)$. In particular, we can choose $\ell$ to be larger than $N_0$ of Lemma \ref{lem:isomorphism-to-Lie-algebra} and to be divisible by a $p$-power that is larger than $m$. Applying the map $\overline{\prol{m}}^\ell$ of Proposition~\ref{prop:rho-bar}, we obtain that $\overline{\prol{m}}^\ell(\Delta)$ contains $\rP{m}(G)(\fm^\ell)/\rP{m}(G)(\fm^{\ell+1})$.

Since, by Lemma \ref{lem:connected-component}, $\rP{m}(G)$ is connected, Lemma \ref{lem:Zariski-dense} shows that $\prol{m}(\Delta)$ is Zariski-dense in $\rP{m}(G)$.

For the general case, let $G_0$ and $\rP{m}(G)_0$ denote the connected components of $1$ in $G$ and $\rP{m}(G)$ respectively. Let $g_1,\ldots, g_s\in \Delta$ be representatives for the residue classes of $G/G_0$. Then by Lemma \ref{lem:connected-component}, the elements $\prol{m}(g_1),\ldots, \prol{m}(g_s)\in \prol{m}(\Delta)$ are representatives for the residue classes of $\rP{m}(G)/\rP{m}(G)_0$.

By the proof for the connected case, $\prol{m}(\Delta)\cap \rP{m}(G)_0(\cK)$ is Zariski-dense in $\rP{m}(G)_0=\rP{m}(G_0)$, and since $\prol{m}(\Delta)$ contains elements in every connected component of $\rP{m}(G)$, we conclude that $\prol{m}(\Delta)$ is Zariski-dense in $\rP{m}(G)$.
\end{proof}

\section{Anderson \texorpdfstring{$t$}{t}-modules and motivic Galois groups}\label{S:t-modules-and-motivic-galois-groups}

 In this section, we will review $t$-modules, $t$-motives, and rigid analytic trivializations to the extent they are needed in this paper.
Afterwards in Section~\ref{S:Galois groups of prolongations}, we will apply our results from Sections \ref{S:prolongation-of-group-schemes} and \ref{S:Zdensity} to our objects of interest in this paper, namely, the motivic Galois groups associated to $t$-motives.

\subsection{Basic notation}

Let $\Fq$ be a finite field where $q$ is the positive power of a prime number $p$. Consider the polynomial ring $\Fq[\theta]$ in the variable $\theta$ and the corresponding fraction field $K=\Fq(\theta)$. Then,  $K_\infty=\Fq\ls{\frac{1}{\theta}}$ is the completion of $K$ with respect to the absolute value $\lvert \cdot \rvert_\infty$ determined by $\lvert \theta \rvert_\infty=q$, and we denote by $\CC_\infty$ the completion of a fixed  algebraic closure of $K_\infty$. Furthermore, let $\oK$ be the algebraic closure of $K$ inside $\CC_{\infty}$.

For a variable $t$ independent from $\theta$, set $\bA=\Fq[t]$, $\bK=\Fq(t)$. 
We let $\CC_\infty \langle t \rangle$ denote the Tate algebra comprising of power series converging on the closed unit disk  
\[
\CC_\infty \langle t \rangle = \bigg\{ \sum_{i \geq 0} e_it^i \in \CC_\infty [\![t]\!] \mid \lim_{i \rightarrow \infty} |e_i|_\infty =0\bigg\}.
\]
On the field of Laurent series $\CI\ls{t}$, we define for $n\in\ZZ$, the $n$-th Frobenius twist $\tau^n: \CI\ls{t} \rightarrow \CI\ls{t}$ by setting for $f = \sum_i a_it^i$,
\[
\tau^n ( f) := f^{(n)} = \sum_i a_i^{q^n}t^i.
\]
Note that the Frobenius twists restrict to endomorphisms on  the subrings $\oK$, $\CI$, and $\CI\cs{t}$.
For matrices $C=(c_{ij})$ with entries in $\CI\ls{t}$, we define the twisting by applying it entry-wise $C^{(n)} = (c_{ij}^{(n)})$.

For any intermediate field $K\subseteq L\subseteq \CI$, we define the twisted polynomial ring $L\{\tau\}$ 
subject to the condition $\tau c = c^q\tau$ for all $c \in L$. 
 Finally, for a matrix $B$ with entries in $L\{\tau\}$, let $\rd B$ denote the matrix  of constant terms of $B$, i.e., the matrix of coefficients of $\tau^0$.

\medskip

We mainly consider two types of hyperdifferential operators or hyperderivatives as defined in Example~\ref{Example:Hyperd}. One type is $(\hde{k}_\theta)_{k\geq 0}$ on $\cK=K$, $K_\infty$, and any separable algebraic extension thereof. The other is $(\hdte{k})_{k\geq 0}$ on various rings $\cR$ containing $\bA=\Fq[t]$, e.g.,~$\cR=\bA_\fp$ for some prime $\fp$ of $\bA$, $\CI\ls{t}$, $\CI[t]$, $\CI\cs{t}$, and $(\bA\otimes \CI)_\fp$ resp.~$(\bA\otimes L^\sep)_\fp$ (see Section \ref{Sub:RATsAndGaloisReps} for the latter two).
Using the hyperdifferential operators $(\hdte{k})_{k\geq 0}$
on such rings $\cR$, for $n \geq 0$, we will make use of the operator $\prol{n}:\Mat_{e}(\cR) \rightarrow \Mat_{e(n+1)}(\cR)$ defined by 
\[
\prol{n}(\Xi) = \begin{pmatrix}
			\Xi & \hdt{1}{\Xi} & \dots& \hdt{n}{\Xi} \\
			&    \ddots     & \ddots&\vdots\\
			&&\ddots&\hdt{1}{\Xi}\\
			&&&\Xi
		\end{pmatrix},
\]
where the hyperdifferential operator is meant to apply on $\Xi$ entry-wise.
We emphasize that the product rule for hyperdifferential operators implies that this operator is a ring homomorphism. 

It is no coincidence that we have the same symbol $\prol{n}$ here as in Formula \ref{eq:gammatohyper}, since it is the very same operator for invertible matrices.

\subsection{Abelian \texorpdfstring{$t$}{t}-modules and  \texorpdfstring{$t$}{t}-motives} \label{SubS:motive}

 Let $L$ be a field such that $K\subseteq L \subseteq \CC_\infty$. An \emph{Anderson $t$-module of dimension $d\geq 1$} defined over $L$ is a pair $(E, \phi)$ consisting of  an algebraic group $E$  over $L$ isomorphic to the $d$-dimensional additive algebraic group $\GG_{a/L}^d$ over $L$ and an $\Fq$-algebra homomorphism
\[
\phi: \bA=\Fq[t] \longrightarrow \End_{\FF_q}(E) \cong \Mat_{d\times d}(L\{\tau\}), 
\]
  such that 
  $(\rd\phi(t)-\theta\Id_d)^\ell \Lie(E)=\{0\}$ for some $\ell>0$. 
  This means that if we write $\phi(t)= \rd \phi(t)  + B_1 \tau + \dots + B_s\tau^s$, then $\rd \phi(t)  - \theta \Id_d$ is a nilpotent matrix. 
  A $t$-module of dimension $1$ such that $s\geq 1$ is called a \emph{Drinfeld module}.

The \emph{Anderson $t$-motive}  associated to a $t$-module $(E, \phi)$ of dimension $d$ defined over $L$ is defined as the left-$L[t]\{\tau\}$-module $\mot_E :=\Hom_{\Fq}(E, \GG_a) \cong \Mat_{1 \times d}(L\{\tau\})$, the module of $\Fq$-linear algebraic group morphisms, where for $m \in  \Mat_{1 \times d}(L\{\tau\})$, the $t$-action is given by
\[
t\cdot m = m \phi(t)\in  \Mat_{1 \times d}(L\{\tau\}). 
\]
Note that naturally $\mot_E$ is free and finitely generated as an $L\{\tau\}$-module. If, in addition, $\mot_E$ is finitely generated as an $L[t]$-module, then $\mot_E$ and $E$ are both said to be \emph{abelian} and the rank of $\mot_E$ as an $L[t]$-module\footnote{By \cite[Lemma 1.4.5]{ga:tm}, the $t$-motive $\mot_E$ is a free $L[t]$-module in this case.} is called the \emph{rank} of $\mot_E$ and $E$.

Associated to an Anderson $t$-module $(E, \phi)$ of dimension $d$ defined over $L$, there exists a unique power series of the form $\Exp_E= \Id_d \tau^0 + \sum_{i\geq 1} \alpha_i \tau^i \in \power{\Mat_d(L)}{\tau}$ satisfying
\[
\Exp_E \rd\phi(t) = \phi(t)\Exp_E.
\] 
Then, $\Exp_E: \Lie(E)(\CC_\infty) \rightarrow E(\CC_\infty)$ 
is an entire function, where 
\[
\Exp_E(\bsz) = \bsz + \sum_{i\geq 1} \alpha_i \bsz^{(i)},
\]
is called the \emph{exponential series of $E$}. If the function $\Exp_E$ is surjective, then $E$ is called a \emph{uniformizable} $t$-module. The kernel $\Lambda_E:= \ker(\Exp_E)$ of $\Exp_E$, called the \emph{period lattice of $E$}, is a free and finitely generated discrete $\bA$-submodule of $\Lie(E)(\CC_\infty)$. The elements of $\Lambda_E$ are called \emph{periods} of $E$. 
If, in addition, $E$ is abelian, then $\rk_{\bA}(\Lambda_E) = \rk_{L[t]}(\mot_E)$. 

For more information on these objects, we direct the reader to Anderson's original paper \cite{ga:tm} and expository sources \cite{db-mp:ridmtt}, \cite{dg:bsffa}, \cite{dt:gsff}.

\begin{Remark}
    In most parts of this paper, $L$ is allowed to be any intermediate field. However, we have to restrict to algebraic extensions of $K$ whenever Papanikolas' theorem is involved, and we have to restrict to separable extensions of $K$ whenever we are using hyperderivatives with respect to $\theta$. These restrictions will be included in the statements.
\end{Remark}

\begin{Example}\label{E:Drinfeld1CarlitzTensor1}
\begin{itemize} 
\item[(i)] \emph{Drinfeld modules.}  Let $(E_\varphi, \varphi)$ be a Drinfeld module of rank $r$ defined over $L$. 
Then, $\varphi(t)$ is of the form
\[
\varphi(t) = \theta + b_1\tau+\dots + b_r\tau^r \in L\{\tau\}, \quad b_r \neq 0.
\]
Note that $\{1, \tau, \dots, \tau^{r-1}\}$ forms an $L[t]$-basis of the $t$-motive $\mot_{E_\varphi}\cong L\{\tau\}$.

\item[(ii)] \emph{Tensor powers of the Carlitz module.} The Carlitz module $(E_C,C)$ is the Drinfeld module of rank $1$ defined over $K$ such that
\[
C_t = \theta +\tau. 
\]
Then, its associated abelian $A$-motive is $\mot_{E_C} = L\{\tau\}$ with $L[t]$-basis $\{1\}$. 
For $k \geq 2$, we consider the $k$-th tensor power $\mot_{E_C}^{\otimes k}$ over $K[t]$ of $\mot_{E_C}$. We can define a $t$-module $(G_k, C^{\otimes k})$ of dimension $k$ and rank $1$ called the $k$-th tensor power of the Carlitz module such that
\[
C^{\otimes k}(t)= \begin{pmatrix}
\theta & 1& 0 && 0\\
&\theta & 1&  \ddots&\vdots\\
&&\ddots&\ddots&0\\
&&&\ddots&1\\
&&&&\theta
\end{pmatrix} +\begin{pmatrix}
0&\dots &\dots &\dots& 0\\
\vdots& & &  &\vdots\\
\vdots&&&&\vdots\\
0&&&&\vdots\\
1&0&\dots&\dots&0
\end{pmatrix}\tau \in \Mat_k(L\{\tau\}).
\]
Its associated abelian $t$-motive $\mot_{G_k} \cong \Mat_{1\times k}(L\{\tau\})$ is isomorphic to $\mot_{E_C}^{\otimes k}$ and it is generated over $L[t]$ by $e_1=(1,0, \dots, 0)$ such that
\begin{equation}\label{E:Cntaut}
(t-\theta)^k e_1 = \tau e_1. 
\end{equation}
\end{itemize}
\end{Example}

\subsection{Rigid analytic trivializations and Galois groups}
\label{Sub:RATsAndGaloisGroups}

Let $(E, \phi)$ be an abelian $t$-module of rank $r$ defined over $L$. Suppose the entries of $\bsm = (m_1, \dots, m_r)^{\tr} \in \Mat_{r\times 1}(\mot_E)$ form an $L[t]$-basis of $\mot_E$ and let $\Theta_E  \in \Mat_r(L[t])$ denote the action of $\tau$ on $\bsm$
\[
\tau(\bsm) = \Theta_E \bsm.
\]
If there exists $\Upsilon \in \GL_r(\CI\cs{t})$ such that $\tau (\Upsilon \bsm)= \Upsilon \bsm$ i.e., 
\begin{equation}\label{eq:difference-equation-for-Upsilon}
\Upsilon = \Upsilon^{(1)}\Theta_E,
\end{equation}
then $\mot_E$ is said to be \emph{rigid analytically trivial} and $\Upsilon$ is called a \emph{rigid analytic trivialization of $\mot_E$} (see \cite{ga:tm}). Anderson  showed (see \cite[Theorem 4]{ga:tm}) that $\mot_E$ is rigid analytically trivial if and only if $E$ is uniformizable.

There are several Galois groups attached to a uniformizable abelian Anderson $t$-module $E$. Associated to a rigid analytic trivialization $\Upsilon$ of its $t$-motive $\mot_E$ as above (or more precisely to $\Upsilon^{-1}$), there is a \emph{difference Galois group} $\Gamma_\Upsilon$ which is a linear algebraic group over $\bK$. Namely, consider the ring $\cR:=L(t)[\Upsilon,\Upsilon^{-1}]\subseteq \operatorname{Frac}(\CI\cs{t})$ generated over $L(t)$ by the entries of $\Upsilon$ and of $\Upsilon^{-1}$, then for any $\bK$-algebra $\sfA$, the $\sfA$-points of $\Gamma_\Upsilon$ are given by
\[  \Gamma_\Upsilon(\sfA) := \left\{ \alpha\in \Aut_{L(t)\otimes_{\bK}\sfA}(\cR\otimes_{\bK} \sfA) \,\middle|\, (\tau\otimes \id)\circ \alpha = \alpha\circ (\tau\otimes \id) \right\},\]
the group of $(L(t)\otimes_{\bK}\sfA)$-algebra automorphisms of $\cR\otimes_{\bK} \sfA$ that commute with the endomorphism $\tau\otimes \id$. One obtains an explicit embedding of linear algebraic groups $\Gamma_\Upsilon$ into $\GL_{r,\bK}$ via
\begin{equation}\label{eq:embedding-of-Gamma_Upsilon}
      C_\Upsilon: \Gamma_\Upsilon(\sfA)\to \GL_r(\sfA), \alpha \mapsto C_\Upsilon(\alpha):=\Upsilon\cdot \alpha(\Upsilon)^{-1}      
\end{equation} 
(see e.g.~\cite[Appendix A]{qg-am:ctmcrzvp}).

\begin{Remark}\label{rem:rats}
    Note that a rigid analytic trivialization $\Upsilon$ is not unique. All rigid analytic trivializations (with respect to the same basis $\bsm$) are obtained from $\Upsilon$ as $D\cdot \Upsilon$ with $D\in \GL_r(\bK)$. On the other hand, a different choice of basis $\bsm$ would change $\Upsilon$ to $\Upsilon\cdot  B$ for some matrix $B\in \GL_r(L[t])$.
 This implies that the group $\Gamma_\Upsilon$ does not depend on the specific chosen $\Upsilon$, but different choices lead to embeddings $C_\Upsilon$ that are conjugate by matrices in $\GL_r(\bK)$ (a possible base change matrix $B\in \GL_r(L[t])$ cancels out in the formula).
\end{Remark}

A second Galois group is obtained from the $t$-motive more abstractly. The $t$-motive $\mot_E$ can be seen as an object in a Tannakian category over $\bK$, and hence there is the Tannakian Galois group $\Gamma_{\mot_E}$ associated to $\mot_E$, called the \emph{motivic Galois group} of $\mot_E$ (see \cite[Definition 2.3.28]{uh-akj:pthshcff}).

Furthermore, attached to $E$, there is also a \emph{$t$-comotive} $\dmot_E$ (formerly called dual $t$-motive) which is \emph{coabelian/$t$-finite} by \cite[Theorem A]{am:aefam}, and a rigid analytic trivialization $\Psi\in \GL_r(\CI\cs{t})$ for $\dmot_E$.
Similar to the $t$-motive case, there is a motivic Galois group $\Gamma_{\dmot_E}$ for $\dmot_E$ and a difference Galois group $\Gamma_{\Psi}$ for $\Psi$.

Due to the (anti-)equivalence of categories between these $t$-motives and $t$-comotives (see \cite[Prop.~2.4.17]{uh-akj:pthshcff}), the Galois group $\Gamma_{\dmot_E}$ is isomorphic to 
$\Gamma_{\mot_E}$. Furthermore, a rigid analytic trivialization $\Psi$ of $\dmot_E$ can be obtained from the rigid analytic trivialization $\Upsilon$ of $\mot_E$ via
\begin{equation}\label{E:ratcomotive}
\Psi=(\Upsilon^{\tr})^{(1)} 
\end{equation}
when choosing an appropriate basis of $\dmot_E$.
This shows that $L(t)[\Psi,\Psi^{-1}]=L(t)[\Upsilon,\Upsilon^{-1}]$ and that the corresponding difference Galois group $\Gamma_{\Psi}$ equals $\Gamma_{\Upsilon}$. Finally by \cite[Theorem 4.5.10]{mp:tdadmaicl}, also the Galois groups $\Gamma_{\dmot_E}$ and $\Gamma_{\Psi}$ are isomorphic over $\bK$, and hence all four Galois groups $\Gamma_{\Upsilon}$, $\Gamma_{\mot_E}$ $\Gamma_{\Psi}$ and $\Gamma_{\dmot_E}$ are isomorphic.

\medskip

In what follows, we will not use the $t$-comotive at all. Instead, we will mainly talk about $\Gamma_{\mot_E}$, and tacitly use the isomorphism to $\Gamma_{\Upsilon}$ and its explicit description as an algebraic subgroup of $\GL_{r,\bK}$. The latter  will also be used in the description of the Galois representations. The above connections allow us to restate Papanikolas' Theorem (which is given for $\Psi$ and $\Gamma_{\dmot_E}$) in terms of $\Upsilon$ and $\Gamma_{\mot_E}$.

\begin{Theorem}[{Papanikolas' Theorem; see~\cite[Thm.~1.1.7]{mp:tdadmaicl}}] \label{T:Pap}
Let $\mot_E$ be a rigid analytically trivial abelian $t$-motive defined over an algebraic extension $L$ of $K$, and let $\Gamma_{\mot_E}$ denote its motivic Galois group. Suppose 
$\Upsilon \in \GL_r(\CC_\infty \langle t \rangle)$ is a rigid analytic trivialization of $\mot_E$. 
Then,
\[
\trdeg_{L} L(\Upsilon^{(1)}|_{t=\theta}) = \trdeg_{L(t)} L(t)(\Upsilon^{(1)}) = \trdeg_{L(t)} L(t)(\Upsilon) = \dim \Gamma_{\mot_E}. 
\]
\end{Theorem}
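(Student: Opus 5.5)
The plan is to deduce the statement from Papanikolas' original theorem \cite[Thm.~1.1.7]{mp:tdadmaicl}, which is formulated for the $t$-comotive $\dmot_E$ and a rigid analytic trivialization $\Psi$ of it. We transport it to $\Upsilon$ using the relation \eqref{E:ratcomotive}, $\Psi=(\Upsilon^{\tr})^{(1)}$, and the identification of the four Galois groups explained above.

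\textbf{Step 1: the difference equation for $\Psi$.} Transposing \eqref{eq:difference-equation-for-Upsilon}, $\Upsilon=\Upsilon^{(1)}\Theta_E$, gives $\Upsilon^{\tr}=\Theta_E^{\tr}(\Upsilon^{\tr})^{(1)}$. Hence
\[ \Psi^{(-1)}=\Phi\Psi \quad\text{with}\quad \Phi:=\Theta_E^{\tr}\in\Mat_r(L[t]). \]
This is exactly the shape of system used in Papanikolas' setting. We then check his remaining hypotheses.
\begin{itemize}
\item $\det\Phi=\det\Theta_E$ is of the form $c(t-\theta)^s$ with $c\in L^\times$. This holds because $\mot_E$ is abelian and $\tau$ acts with cokernel killed by a power of $t-\theta$.
\item $\dmot_E$ is coabelian ($t$-finite) by \cite[Theorem A]{am:aefam}.
\item $\Psi\in\GL_r(\CI\cs{t})$.
\item $L$ is algebraic over $K$, so we may work over $\oK$. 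Transcendence degrees over $L$ and over $\oK$ (respectively over $L(t)$ and $\oK(t)$) coincide, since the extensions are algebraic.
\item The entries of $\Psi$ extend to entire functions. This follows from the functional equation $\Psi=\Phi^{(1)}\Psi^{(1)}$, iterated, as in \cite[Prop.~3.1.3]{ga-db-mp:dcmhdf}-type arguments. So the specialization $\Psi|_{t=\theta}$ makes sense.
\end{itemize}
Papanikolas' theorem then yields
\[ \trdeg_L L(\Psi|_{t=\theta})=\trdeg_{L(t)}L(t)(\Psi)=\dim\Gamma_\Psi. \]

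\textbf{Step 2: rewriting in terms of $\Upsilon$.} The entries of $\Psi$ are those of $\Upsilon^{(1)}$ up to transposition. Hence $L(\Psi|_{t=\theta})=L(\Upsilon^{(1)}|_{t=\theta})$ and $L(t)(\Psi)=L(t)(\Upsilon^{(1)})$, which gives the first and second expressions in the statement. For the middle equality, note $\Upsilon=\Upsilon^{(1)}\Theta_E$ with $\Theta_E\in\Mat_r(L[t])$. Since $\det\Theta_E\neq 0$, also $\Upsilon^{(1)}=\Upsilon\Theta_E^{-1}$ with $\Theta_E^{-1}\in\GL_r(L(t))$. So $L(t)(\Upsilon)=L(t)(\Upsilon^{(1)})$ as fields, not just up to algebraic extension.

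\textbf{Step 3: the dimension.} We have $\Gamma_\Psi=\Gamma_\Upsilon$ because $L(t)[\Psi,\Psi^{-1}]=L(t)[\Upsilon,\Upsilon^{-1}]$. Also $\Gamma_\Psi\cong\Gamma_{\dmot_E}\cong\Gamma_{\mot_E}$ by \cite[Theorem 4.5.10]{mp:tdadmaicl} and the (anti-)equivalence \cite[Prop.~2.4.17]{uh-akj:pthshcff}. Therefore $\dim\Gamma_\Psi=\dim\Gamma_{\mot_E}$, which completes the chain.

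The main obstacle is Step 1. There one must make sure that the basis of $\dmot_E$ producing $\Psi=(\Upsilon^{\tr})^{(1)}$ is a genuine $\oK[t]$-basis with $\Phi$ satisfying Papanikolas' determinant and entireness conditions. A related subtlety is that his theorem is stated over $\oK$ rather than an arbitrary algebraic $L$. I would handle this by base-changing to $\oK$ and using that transcendence degrees are insensitive to algebraic base extension. The rest is bookkeeping.
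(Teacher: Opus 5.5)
Your proposal is correct and follows the paper's own route. The paper gives no separate proof; it transports Papanikolas' comotive statement \cite[Thm.~1.1.7]{mp:tdadmaicl} to $\Upsilon$ via $\Psi=(\Upsilon^{\tr})^{(1)}$, the identifications $\Gamma_\Psi=\Gamma_\Upsilon\cong\Gamma_{\dmot_E}\cong\Gamma_{\mot_E}$, coabelianity from \cite[Theorem A]{am:aefam}, and regularity at $t=\theta$ from \cite[Prop.~3.1.3]{ga-wb-mp:darasgvpc}, while your derivation of $L(t)(\Upsilon)=L(t)(\Upsilon^{(1)})$ from $\Upsilon^{(1)}=\Upsilon\Theta_E^{-1}$ and your reduction from $L$ to $\oK$ only make explicit steps the paper leaves tacit.
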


Note that by \cite[Prop.~3.1.3]{ga-wb-mp:darasgvpc}, the entries of  $\Psi=(\Upsilon^{\tr})^{(1)}$ converge for all elements in $\CI$, and hence are regular at $t=\theta$, so that the specialization of the entries of $\Psi^{\tr}=\Upsilon^{(1)}$ at $t=\theta$, denoted by $\Upsilon^{(1)}|_{t=\theta}$, is meaningful.

\subsection{Prolongation of \texorpdfstring{$t$}{t}-modules and \texorpdfstring{$t$}{t}-motives}\label{Sub:Promodulesmotives}

To introduce prolongations of $t$-modules and $t$-motives, we use the hyperdifferential operator $(\hde{k}_t)_{k\geq 0}$ on $L[t]$.

As defined in \cite{am:ptmaip}, for $n \geq 0$, the \emph{$n$-th prolongation $\rho_n\mot_E$} of the $t$-motive $\mot_E$ defined over $L$ is the $L[t]\{\tau\}$-module generated by symbols $D_im$ for $m \in \mot_E$ and $0\leq i \leq n$ subject to the relations
\begin{itemize}
\item[(a)] $D_i(m_1+m_2) = D_i(m_1)+D_i(m_2)$,
\item[(b)] $D_i(a\cdot m) = \sum_{i=i_1+i_2} \hde{i_1}_{t}(a)\cdot D_{i_2}m$,
\item[(c)] $\tau(a\cdot D_i m) = a^{(1)} \cdot D_i(\tau m)$,
\end{itemize}
where $m, m_1, m_2 \in \mot_E$ and $a \in L[t]$.

Suppose that the $t$-module $(E, \phi)$ with $E \cong \GG_{a/L}^d$ where $d\geq 1$ is given by the $\Fq$-linear algebra homomorphism $\phi: \bA \rightarrow \Mat_d(L\{\tau\})$. Then, by \cite[Theorem~5.2]{am:ptmaip}, the \emph{$n$-th prolongation of $E$} is the $t$-module $(\rho_nE, \rho_n\phi)$ of dimension $(n+1)d$ with $\rho_n E \cong \GG_{a/L}^{(n+1)d}$ and is given by $\rho_n\phi: \Fq[t] \rightarrow \Mat_{(n+1)d}(L\{\tau\})$ where
\[
\rho_n\phi(t)= \begin{pmatrix}
\phi(t) & 0 & \dots & 0 \\
-\Id_d &\ddots &\ddots & \vdots\\
\vdots&\ddots &\ddots &  0\\
0 & \dots & -\Id_d & \phi(t)
\end{pmatrix}.
\]
One can show that the $t$-motive $\mot_{\rho_n E}$ associated to $(\rho_nE, \rho_n\phi)$ is isomorphic to $\rho_n \mot_E$ (see  \cite[Theorem~5.2]{am:ptmaip} or \cite[Proposition~5.2.11]{cn-mp:hpqpam}). 
Note that for $n \geq 0$, 
$\mot_{\rho_n E}$ is abelian if $\mot_E$ is abelian. 

For uniformizability, we have the following theorem.

\begin{Theorem}[{\cite[Proof of Theorem~3.6(a)]{am:ptmaip}}]\label{T:RATpro}
 Let $E$ be an abelian $t$-module, $\mot=\mot_E$ be its corresponding $t$-motive, and for $n \geq 0$, $\rho_n \mot$ be its $n$-th prolongation. If $\Upsilon \in \GL_{r}(\CI\cs{t})$ is a rigid analytic trivialization of $\mot$, then $\prol{n}(\Upsilon) \in \GL_{r(n+1)}(\CI\cs{t})$ is a rigid analytic trivialization of $\rho_n \mot$. 
\end{Theorem}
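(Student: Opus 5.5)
We must show: with $\Upsilon$ a rigid analytic trivialization of $\mot$ with respect to an $L[t]$-basis $\bsm$ (so $\Upsilon=\Upsilon^{(1)}\Theta_E$), the matrix $\prol{n}(\Upsilon)$ is invertible over $\CI\cs{t}$ and satisfies the difference equation for a suitable $L[t]$-basis of $\rho_n\mot$.

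\emph{Step 1: a basis and its $\tau$-matrix.} Take the family $D_i m_j$, $0\le i\le n$, $1\le j\le r$, stacked as a column $\boldsymbol{D}\bsm=(D_n\bsm,\dots,D_0\bsm)^{\tr}$, in an order chosen to match the upper-triangular shape of $\prol{n}$. Relation (b) gives $D_i(a m)=\sum_{i_1+i_2=i}\hdt{i_1}{a}D_{i_2}m$. Combined with the freeness of $\mot$ over $L[t]$, this shows the family is an $L[t]$-basis of $\rho_n\mot$. This is essentially \cite{am:ptmaip}, and can also be seen from the filtration by the $D_{\le i}$ parts, with successive quotients isomorphic to $\mot$.

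Next, apply $\tau$ using (c) and (b):
\[
\tau D_i\bsm = D_i(\tau\bsm)=D_i(\Theta_E\bsm)=\sum_{i_1+i_2=i}\hdt{i_1}{\Theta_E}D_{i_2}\bsm .
\]
With the chosen ordering, this says that $\tau(\boldsymbol D\bsm)=\prol{n}(\Theta_E)\,\boldsymbol D\bsm$, possibly after transposing or reversing the block order. I will fix the ordering so that the matrix is exactly $\prol{n}(\Theta_E)$.

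\emph{Step 2: the difference equation.} The maps $\hdte{k}$ act only on coefficients of powers of $t$, so they commute with the Frobenius twist: $\hdt{k}{f^{(1)}}=\hdt{k}{f}^{(1)}$ for $f\in\CI\cs{t}$. Hence $\prol{n}(\Upsilon^{(1)})=\prol{n}(\Upsilon)^{(1)}$. Since $\prol{n}$ is a ring homomorphism on matrices,
\[
\prol{n}(\Upsilon)=\prol{n}(\Upsilon^{(1)}\Theta_E)=\prol{n}(\Upsilon)^{(1)}\prol{n}(\Theta_E).
\]
This is precisely $\tau(\prol{n}(\Upsilon)\boldsymbol D\bsm)=\prol{n}(\Upsilon)\boldsymbol D\bsm$.

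\emph{Step 3: invertibility.} The matrix $\prol{n}(\Upsilon)$ is block upper triangular with diagonal blocks $\Upsilon$, so its determinant is $\det(\Upsilon)^{n+1}$, a unit in $\CI\cs{t}$. Equivalently, $\prol{n}(\Upsilon^{-1})$ is an inverse, again by multiplicativity. We also need that $\hdte{k}$ preserves $\CI\cs{t}$: since $\binom{i}{k}$ is bounded, the coefficients of $\hdt{k}{f}$ still tend to $0$. Hence $\prol{n}(\Upsilon)\in\GL_{r(n+1)}(\CI\cs{t})$.

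\emph{Main obstacle.} The main difficulty is bookkeeping in Step 1. One must check that the $D_i m_j$ really form an $L[t]$-basis, and that the block ordering makes the $\tau$-matrix upper triangular with $\hdt{k}{\Theta_E}$ in the $k$-th superdiagonal, rather than its transpose. If the natural ordering produces the transpose, I would instead use the reversed order, $D_0$ block last, which conjugates by a block permutation and turns lower triangular into upper triangular form.
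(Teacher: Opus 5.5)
Your proposal is correct and follows the argument of the cited proof in \cite{am:ptmaip}: the $D_i m_j$ form an $L[t]$-basis of $\rho_n\mot$ on which $\tau$ acts by $\prol{n}(\Theta_E)$, and since $\prol{n}$ is a ring homomorphism commuting with Frobenius twisting, $\prol{n}(\Upsilon)=\prol{n}(\Upsilon)^{(1)}\prol{n}(\Theta_E)$, with inverse $\prol{n}(\Upsilon^{-1})$. Your bookkeeping worry is unnecessary: with the ordering $(D_n\bsm,\dots,D_0\bsm)^{\tr}$ you chose, the $a$-th block (counting from $0$ at the top) of $\prol{n}(\Theta_E)\,\boldsymbol{D}\bsm$ is $\sum_{i_1+i_2=n-a}\hdt{i_1}{\Theta_E}D_{i_2}\bsm=\tau(D_{n-a}\bsm)$, so the $\tau$-matrix is exactly $\prol{n}(\Theta_E)$ with no transposing or reordering.
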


\subsection{Rigid analytic trivializations, periods and quasi-periods}\label{SS:RatPeriodsTractable}

The theory of biderivations and quasi-periodic functions for Drinfeld modules  was developed by Anderson, Deligne, Gekeler, and Yu (see \cite{MR1018059}, \cite{jy:opaqpodm})) and was extended to the setting of Anderson $t$-modules by Brownawell and Papanikolas (see \cite{db-mp:ligvpc}).
Let $\{\lambda_1, \dots, \lambda_r\}$ be an $\bA$-basis of the period lattice $\Lambda_{E}$ of an abelian $t$-module $(E, \phi)$ of dimension $d$ and rank $r$. Then, for $1\leq j \leq r$, there exist quasi-periods $F_{j}(\lambda_i)$ for each $1\leq i \leq r$. The $L$-span of these quasi-periods is independent of the choices of the $\lambda_i$'s and the $F_{j}(\lambda_i)$'s.

 If $E$ is a Drinfeld module, then one can pick $F_r$ such that $F_r(\lambda_i)=\lambda_i$ and show that $\Span_{L}(\Lambda_E) \subseteq \Span_L(\Upsilon_E^{(1)}|_{t=\theta})$ (see \cite[\S4.2]{fp:aiacnn} and \cite[Prop.~3.4.7(c)]{cc-mp:aipldm}). This is not the case for higher dimensional $t$-modules as the situation depends on the nilpotent matrix $\rd\phi_t-\theta\Id_d$, which provides only the tractable coordinates of the periods of $E$.  If $\rd\phi_t$ is in Jordan canonical form, then for each $\lambda\in \Lambda_E$, the \emph{tractable} coordinates of $\lambda$ are those coordinates that lie at the bottom of a Jordan block of $\rd\phi_t$. 
Since the $L$-span of the quasi-periods of $E$ is unique up to isomorphisms of $E$, we may assume that $\rd\phi_t$ is in Jordan canonical form.

 In \cite{am:ptmsv}, the first author showed that the entries of a $\prol{n}(\Upsilon_E^{(1)})$ can be related to all coordinates of the periods of the $d$-dimensional $t$-module $E$ in a nice way. Then,  in \cite{cn-mp:hpqpam}, Papanikolas and the second author showed that the entries of $\prol{n}(\Upsilon_E^{(1)})$ for sufficiently large $n \geq 0$ can be related to the hyperderivatives with respect to $\theta$ of periods and quasi-periods also. Since $(\hde{k}_\theta)_{k\geq 0}$ on $K$ and $K_{\infty}$ extend to separable extensions, but not to inseparable extensions, we let $L$ be a separable extension of $K$.

\begin{Theorem}[see~{\cite[Main Theorem]{am:ptmsv}, \cite[Theorem F]{cn-mp:hpqpam}}]\label{T:hyperRATnPer}
 Let $(E,\phi)$ be a uniformizable abelian  $t$-module of rank $r$ and dimension $d$ defined over a finite separable extension $L$ of $K$.  
 Let $j \geq 0$ and choose $n\geq 0$ so that $(d\phi(t)-\theta \Id_d)^{n-j}=0$.
 If $\lambda_1, \dots, \lambda_r$ is an $\bA$-basis of $\Lambda_E$, then
 \[
 \Span_{L}\left(\hde{j}_\theta(\lambda_i)\right) \subseteq  \Span_{L}\left(\prol{n}(\Upsilon_E^{(1)}) \mid_{t=\theta} \right). 
 \]
 Moreover, the $L$-linear combinations of entries of $\prol{n}(\Upsilon_E^{(1)}) \mid_{t=\theta}$ generate all quasi-periods and tractable coordinates of periods of $E$, and their hyperderivatives upto $n$. 
\end{Theorem}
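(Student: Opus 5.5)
The plan is to reduce the statement to the case $j=0$ applied to the prolongation $\rho_nE$, using the rigid analytic trivialization $\prol{n}(\Upsilon_E)$ of $\rho_n\mot_E$ from Theorem~\ref{T:RATpro}. There are three ingredients. First, the periods of $\rho_nE$ should be the hyperderivatives of the periods of $E$. Second, the tractable coordinates of a $t$-module are $L$-linear combinations of the entries of its rigid analytic trivialization twisted once and specialized at $t=\theta$. Third, the Jordan structure of $\rd\rho_n\phi(t)$ pushes every coordinate of $\hde{j}_\theta(\lambda_i)$ into a tractable position once $(\rd\phi(t)-\theta\Id_d)^{n-j}=0$.

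\emph{Step 1 (periods of the prolongation).} Since $L$ is separable over $K$, the operators $\hde{k}_\theta$ extend to $L$ and to $\CI$-valued convergent series with coefficients in $L$. Apply $\hde{k}_\theta$ coefficientwise to the functional equation $\Exp_E\,\rd\phi(t)=\phi(t)\Exp_E$, using that $\hde{k}_\theta$ commutes with $q$-th powers up to the rule $\hd{k}{a^q}=\hd{k/q}{a}^q$ (and $=0$ if $q\nmid k$). I expect to get an explicit formula expressing $\Exp_{\rho_nE}$ through $\Exp_E$ and its $\theta$-hyperderivatives. The model is the block shape of $\rho_n\phi(t)$, with $-\Id_d$ on the subdiagonal mirroring $\hd{1}{\theta}=1$. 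From this formula, the vector $(\lambda,\hd{1}{\lambda},\dots,\hd{n}{\lambda})$, with suitable signs and ordering, lies in $\Lambda_{\rho_nE}$. By a rank count ($\rk\Lambda_{\rho_nE}=(n+1)r$, together with the $\bA$-action through $\hde{*}_t$), these vectors and their $\bA$-multiples span $\Lambda_{\rho_nE}$.

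\emph{Step 2 (tractable coordinates).} For an abelian uniformizable $t$-module $F$ with rigid analytic trivialization $\Upsilon_F$, I will establish that the tractable coordinates of periods, and the quasi-periods, lie in $\Span_L(\Upsilon_F^{(1)}|_{t=\theta})$. The route is through Anderson generating functions
\[ G_\lambda(t)=\sum_{k\ge0}\Exp_F\bigl(\rd\phi(t)^{-k-1}\lambda\bigr)t^k. \]
Pairing $G_\lambda$ against the $L[t]$-basis $\bsm$ of $\mot_F$ yields the entries of $\Upsilon_F^{-1}$, up to a twist and a matrix in $\GL_r(\bK)$. The principal part of $G_\lambda$ at $t=\theta$ is $\sum_k N^k\lambda/(t-\theta)^{k+1}$, where $N=\rd\phi(t)-\theta\Id$. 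So the residue-type coefficients read off exactly the bottom-of-Jordan-block coordinates. Applying this to $F=\rho_nE$ with $\Upsilon_F=\prol{n}(\Upsilon_E)$ places all tractable coordinates of $\rho_nE$ in $\Span_L(\prol{n}(\Upsilon_E^{(1)})|_{t=\theta})$; this is the $j=0$ statement for $\rho_nE$.

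\emph{Step 3 (combinatorics of Jordan blocks).} The matrix $\rd\rho_n\phi(t)-\theta\Id$ equals $N$ on the diagonal blocks and $-\Id_d$ on the subdiagonal blocks. After a change of basis over $\Fq$, its Jordan blocks are concatenations that chain $d$-blocks across the $n+1$ levels. The coordinate block sitting at level $j$ lies at the bottom of a Jordan block exactly when the remaining chain below it is killed. This holds precisely when $N^{n-j}=0$, and then every coordinate of $\hd{j}{\lambda}$ is tractable for $\rho_nE$. Combining with Steps 1 and 2 gives the inclusion. The ``moreover'' part follows by the same reasoning applied to the quasi-periods of $\rho_nE$, which via the biderivations of the prolongation are the $\theta$-hyperderivatives of those of $E$.

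I expect Step 1 to be the main obstacle. Making the identification between $\theta$-hyperderivatives of $\Exp_E$ and the $t$-structure of $\rho_n\phi$ precise, including signs and the interaction of $\hde{*}_\theta$ with Frobenius twists, is delicate. I would first verify it on the Carlitz module and its tensor powers (Example~\ref{E:Drinfeld1CarlitzTensor1}) and then argue in general by the uniqueness of $\Exp_{\rho_nE}$.
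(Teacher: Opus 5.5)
\textbf{Verdict: genuine gap.} Your Step~1 is false for the prolongation used in this paper, and Steps~2 and~3 are applied to the vectors it produces.

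The subdiagonal blocks of $\rho_n\phi(t)$ are $-\Id_d$, whose entries lie in $\Fq$ and therefore commute with $\tau$. Hence $\operatorname{diag}(\Exp_E,\dots,\Exp_E)$ has linear term $\Id$ and satisfies $\Exp\cdot\rd\rho_n\phi(t)=\rho_n\phi(t)\cdot\Exp$. By uniqueness it equals $\Exp_{\rho_nE}$, so $\Lambda_{\rho_nE}=\Lambda_E^{\oplus(n+1)}$. The periods of $\rho_nE$ are just tuples of periods of $E$; in any model over $L$, their coordinates lie in the $L$-span of coordinates of $\Lambda_E$. So $\hde{k}_\theta(\lambda)$ with $k\geq1$ does not appear among them. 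Concretely, $\Lambda_{\rho_1C}=\Fq[\theta]\widetilde{\pi}\oplus\Fq[\theta]\widetilde{\pi}$. On the other hand, $\hde{1}_\theta(\widetilde{\pi})/\widetilde{\pi}=\sum_{i\geq1}(\theta^{q^i}-\theta)^{-1}$ has absolute value $q^{-q}$, so it is not in $\Fq[\theta]$. Hence no vector with entries $\widetilde{\pi},\pm\hde{1}_\theta(\widetilde{\pi})$ is a period of $\rho_1C$.

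The heuristic that $-\Id_d$ mirrors $\hde{1}_\theta(\theta)=1$ is where things break. $\rho_n$ is built from $\hdte{*}$ on $L[t]$, which commutes with Frobenius twisting. By contrast, $\hde{k}_\theta(c^q)$ is $\bigl(\hde{k/q}_\theta(c)\bigr)^q$ or $0$. Hyperdifferentiating $\Exp_E(\lambda)=0$ in $\theta$ does exhibit $(\lambda,\hde{1}_\theta(\lambda),\dots)$ as a period, but of a different, non-abelian $t$-module. For Carlitz and $n=1$ it is $t\mapsto\bigl(\begin{smallmatrix}\theta+\tau&0\\1&\theta\end{smallmatrix}\bigr)$, which contains $\GG_a$ with $t$ acting by $\theta$. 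Your Step~2 cannot be applied to it.

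The missing idea is where the $\theta$-hyperderivatives actually come from. One needs a comparison between $\hde{k}_\theta$ of a specialization $h|_{t=\theta}$ and the specializations $\hdte{i}(h)|_{t=\theta}$, $i\leq k$, for entries $h$ of $(\Upsilon^{(1)})^{-1}$. For Carlitz, the $t$-coefficients of $\Omega^{-1}$ are $q$-th powers, so $\hde{1}_\theta(\widetilde{\pi})=-\hdte{1}(\Omega^{-1})|_{t=\theta}$. In general, hyperdifferentiating the coefficients produces correction terms that must be controlled. This is what \cite{cn-mp:hpqpam} supplies; the paper cites it rather than reproving it, and your proposal has nothing in its place.

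In $t$-module terms, $\hde{1}_\theta(\widetilde{\pi})$ is, up to sign, a quasi-period of $\rho_1C$. For the biderivation $t\mapsto(0,\tau)$, the quasi-periodic function is $(z_0,z_1)\mapsto(\hde{1}_\theta\Exp_C)(z_0)+\Exp_C(z_1)-z_1$, with the first term hyperdifferentiated coefficientwise. Its value at the period $(\widetilde{\pi},0)$ is $-\hde{1}_\theta(\widetilde{\pi})$.

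What survives of your Step~3 is the mechanism of \cite{am:ptmsv}, applied to periods $(\lambda,0,\dots,0)$ rather than to hyperderivative vectors. With $N:=\rd\phi(t)-\theta\Id_d$, the functionals on $\Lie(\rho_mE)$ vanishing on the image of $\rd\rho_m\phi(t)-\theta\Id$ are exactly $(\ell_0,\ell_0N,\dots,\ell_0N^m)$ with $\ell_0N^{m+1}=0$. So all coordinates of $\lambda$ become tractable for $\rho_mE$ once $N^{m+1}=0$. The hypothesis $N^{n-j}=0$ therefore reflects a split of the total order $n$: $n-j$ comes from this step and $j$ from the $\theta$/$t$ comparison. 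It does not come from Jordan chains at level $j$. Your concatenation picture of the Jordan blocks is also wrong: for a single $2\times2$ block and $n=1$, the Jordan type is $(3,1)$ when $p\neq2$.

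Finally, your Step~2 produces entries of $\Upsilon_F^{-1}$. Correspondingly, the span that contains the periods is that of $\prol{n}(\Upsilon_E^{(1)})^{-1}|_{t=\theta}$, not $\prol{n}(\Upsilon_E^{(1)})|_{t=\theta}$; for Carlitz, $\Upsilon^{(1)}|_{t=\theta}=\Omega(\theta)=-1/\widetilde{\pi}$. This slip is already in the printed statement. It is harmless for the paper's applications, which only use the generated fields.
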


\subsection{Rigid analytic trivializations and Galois representations}\label{Sub:RATsAndGaloisReps}

For $0\neq \mathfrak{a} \in \bA$ and an abelian $t$-module $(E, \phi)$, we consider the finite kernel 
$\phi[\mathfrak{a}]:= \ker(\phi(\mathfrak{a})) = \{ \boldsymbol{x} \in \CC_\infty^d \ \mid \ \phi(\mathfrak{a})\boldsymbol{x}=0\}$ which is isomorphic to $(\bA/(\mathfrak{a}))^{\oplus r}$ (see \cite[Thm.~7.2.1]{dt:ffa}). We define the \emph{$\fp$-adic Tate module} $T_\fp(E)$ for a non-zero prime $\fp \in \bA$ to be
\[
T_{\fp}(E) := \lim_{\substack{\longleftarrow \\ n}} \phi[\fp^{n+1}] \cong \bA_\fp^{\oplus r}, 
\]
where $\bA_\fp$ denotes the completion of $\bA$ at $\fp$. 
The elements of $\phi[\mathfrak{a}]$ are separable over $L$ i.e., $\phi[\mathfrak{a}] \subseteq E(L^{\sep})$, where $L^{\sep}$ is the separable closure of $L$ inside $\CC_{\infty}$. The elements of the Galois group $\Gal(L^\sep/L)$ act on $T_{\fp}(E)$ inducing automorphisms on it and this defines the representation
\[
\varrho_{E,\fp}:\Gal(L^\sep/L)\to\GL_r(\bA_\fp),\]
called the $\fp$-adic Galois representation associated to $E$. 

\begin{Theorem}[{\cite[Theorem 9.10]{am:naamcigp}}] \label{thm:Galois-representation}
Let $(E, \phi)$ be a uniformizable abelian $t$-module of rank $r$ defined over $L$.
Let $\Upsilon\in \GL_{r}(\CI\cs{t})$ be a rigid analytic trivialization of the $t$-motive $\mot_E$, and $\Gamma_\Upsilon\cong \Gamma_{\mot_E}$ be the corresponding Galois groups. Consider $\Gamma_\Upsilon\subseteq \GL_{r,\bK}$ via the embedding $C_\Upsilon$ of \eqref{eq:embedding-of-Gamma_Upsilon}. For a non-zero prime $\fp$ of $\bA$, let $\bK_{\fp}$ denote the field of fractions of $\bA_{\fp}$. Then the $\fp$-adic Galois representation $\varrho_{E,\fp}:\Gal(L^\mathrm{sep}/L)\to \GL_r(\bA_\fp)$ attached to $E$ is such that
\[ 
\varrho_{E,\fp}(\Gal(L^\mathrm{sep}/L)) \subseteq \Gamma_{\Upsilon} (\bA_{\fp}) := \GL_r(\bA_\fp) \cap \Gamma_{\Upsilon}(\bK_\fp).
\]
\end{Theorem}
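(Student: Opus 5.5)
The plan is to realize the Galois action on $T_\fp(E)$ as difference-algebra automorphisms of a ``$\fp$-adic Picard--Vessiot ring'', and then transport these automorphisms to $\Gamma_\Upsilon$ through a comparison between the analytic and the $\fp$-adic trivializations.

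First I would describe the Tate module through the $t$-motive. Let $\cR_\fp:=(\bA\otimes L^\sep)_\fp$ be the $\fp$-adic completion of $\bA\otimes_{\Fq} L^\sep$, with $\tau$ acting on the second factor. The standard argument identifies $T_\fp(E)$ with the $\bA_\fp$-dual of the $\tau$-invariants $(\cR_\fp\otimes_{L[t]}\mot_E)^{\tau}$; one uses $\phi[\fp^n]\cong\Hom_{\bA}(\mot_E/\fp^n\mot_E,\ldots)$, and the $\tau$-fixed points are computed by Lang's theorem level by level. Concretely, there is a matrix $\Upsilon_\fp\in\GL_r(\cR_\fp)$ with $\Upsilon_\fp=\Upsilon_\fp^{(1)}\Theta_E$, i.e.\ a \emph{$\fp$-adic trivialization}. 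Its rows are a $\tau$-invariant basis, and they are in bijection with an $\bA_\fp$-basis of $T_\fp(E)$. Since $\Theta_E$ has coefficients in $L[t]$, for $\sigma\in\Gal(L^\sep/L)$ acting coefficientwise, $\sigma(\Upsilon_\fp)$ is again a trivialization. Hence $\sigma(\Upsilon_\fp)=\varrho_{E,\fp}(\sigma)^{-1}\Upsilon_\fp$, after adjusting the convention, because two trivializations differ by a matrix in $\GL_r(\cR_\fp^{\tau})=\GL_r(\bA_\fp)$. Thus $\varrho_{E,\fp}(\sigma)=\Upsilon_\fp\,\sigma(\Upsilon_\fp)^{-1}$, exactly of the shape of $C_\Upsilon$ in \eqref{eq:embedding-of-Gamma_Upsilon}.

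Second, I would let $\cR'=L(t)[\Upsilon_\fp,\Upsilon_\fp^{-1}]$ inside a suitable fraction ring of $\cR_\fp$. Each $\sigma$ induces an automorphism of $\cR'$ over $L(t)$ commuting with $\tau$, because $\sigma$ commutes with $\tau$ on $L^\sep$. So $\varrho_{E,\fp}(\sigma)$ is a $\bA_\fp$-point of the difference Galois group $\Gamma_{\Upsilon_\fp}\subseteq\GL_{r,\bK_\fp}$ of this $\fp$-adic Picard--Vessiot extension. It therefore remains to show that $\Gamma_{\Upsilon_\fp}$ coincides with $\Gamma_\Upsilon\times_{\bK}\bK_\fp$ inside $\GL_r$, possibly up to conjugation by an element that is harmless for the statement.

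Third, the comparison step, which I expect to be the main obstacle. Both $\Upsilon$ and $\Upsilon_\fp$ solve the same equation $X=X^{(1)}\Theta_E$, but they live in different rings ($\CI\cs{t}$ versus $\cR_\fp$), with no common overring. I would use the Tannakian description: $\Gamma_{\mot_E}$ is the stabilizer of all sub-$t$-motives of tensor constructions of $\mot_E$, and $\Upsilon$ realizes the Betti fibre functor. Likewise, $\Upsilon_\fp$ realizes the $\fp$-adic fibre functor $\mot\mapsto(\cR_\fp\otimes\mot)^\tau\otimes\bK_\fp$, and every Galois element acts as a tensor automorphism of it. A tensor automorphism of this fibre functor preserves the realizations of all sub-objects, so it lies in the $\bK_\fp$-form of $\Gamma_{\mot_E}$ attached to that functor.

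To land in $\Gamma_\Upsilon$ with the specific embedding $C_\Upsilon$, I would compare the two fibre functors after base change to a common field. The natural candidate is the completion of $\CI\cs{t}$ at $\fp$ ($\CI$-coefficients suffice since $L^\sep\subseteq\CI$). There, $\Upsilon$ and $\Upsilon_\fp$ both make sense and differ by a constant matrix $D\in\GL_r(\bK_\fp)$, $\Upsilon_\fp=D\Upsilon$, since the $\tau$-constants of that ring are $\bK_\fp$. Then $C_{\Upsilon_\fp}=D\,C_\Upsilon D^{-1}$, and choosing the basis of $T_\fp(E)$ compatibly with the embedding (which is what the explicit identification provides) gives $\varrho_{E,\fp}(\Gal(L^\sep/L))\subseteq\Gamma_\Upsilon(\bK_\fp)\cap\GL_r(\bA_\fp)$. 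The delicate points are:
\begin{itemize}
\item verifying that $\Upsilon$ indeed extends to this $\fp$-adic completion;
\item verifying that the ideal of algebraic relations of $\Upsilon$ over $L(t)$ is preserved under this extension, so that the groups $\Gamma_\Upsilon$ and $\Gamma_{\Upsilon_\fp}$ match.
\end{itemize}
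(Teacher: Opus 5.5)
Your steps 1 and 3 are essentially right, and they reconstruct the explicit description that the paper imports from \cite{am:naamcigp}. The Taylor expansion $\D_\fp:\CI\cs{t}\to(\bA\otimes\CI)_\fp$ exists because the roots of $\up$ lie in $\overline{\FF}_q$, hence in the closed unit disc, and it commutes with $\tau$ by continuity. Moreover, $\Theta_E$ is invertible over $(\bA\otimes\CI)_\fp$, since $\theta$ is not a root of $\up$. So $\Upsilon_\fp\,\D_\fp(\Upsilon)^{-1}$ is $\tau$-invariant and lies in $\GL_r(\bA_\fp)$, not merely $\GL_r(\bK_\fp)$; this is what legitimizes your change of basis of $T_\fp(E)$. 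You thus get $\D_\fp(\Upsilon)\in\GL_r((\bA\otimes L^\sep)_\fp)$ and, after that basis change, $\varrho_{E,\fp}(\sigma)=\D_\fp(\Upsilon)\,\sigma(\D_\fp(\Upsilon))^{-1}$, which is \eqref{eq:explicit-Galois-represenation}.

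\textbf{The gap.} The remaining step is the whole content of the statement: why this matrix lies in $\Gamma_\Upsilon(\bK_\fp)$. You defer it to ``delicate points'' and plan to show that a $\fp$-adic Picard--Vessiot group $\Gamma_{\Upsilon_\fp}$ coincides with $\Gamma_\Upsilon\times_\bK\bK_\fp$, so that $C_{\Upsilon_\fp}=DC_\Upsilon D^{-1}$. As set up, this is not available, for three reasons.
\begin{itemize}
\item $L(t)[\Upsilon_\fp,\Upsilon_\fp^{-1}]$ has no reason to be a Picard--Vessiot ring with group over $\bK_\fp$. The $\tau$-constants of $L(t)$ are $\bK$, and $(\bA\otimes L^\sep)_\fp$ is not even a domain when $\deg\fp>1$.
\item $\sigma$ does not act on $\CI\cs{t}$ or on $\cR=L(t)[\Upsilon,\Upsilon^{-1}]$, since it is in general not $|\cdot|_\infty$-continuous. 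So you cannot attach to $\sigma$ an automorphism $\alpha$ of $\cR\otimes_\bK\bK_\fp$ with $C_\Upsilon(\alpha)=\varrho_{E,\fp}(\sigma)$. That would first require injectivity of $\cR\otimes_\bK\bK_\fp\to(\bA\otimes L^\sep)_\fp[\up^{-1}]$, a nontrivial linear-disjointness statement.
\item The Tannakian remark only places the image in $\underline{\Aut}^{\otimes}$ of the $\fp$-adic fibre functor. That group agrees with $C_\Upsilon(\Gamma_\Upsilon)\times\bK_\fp$ only once you prove that $\D_\fp$ induces a $\otimes$-isomorphism of fibre functors, which you have not done.
\end{itemize}

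\textbf{The missing idea} is the torsor property of $\Gamma_\Upsilon$, in the form of Lemma~\ref{lem:description-of-Gamma-Upsilon}: $\Upsilon\boxtimes\Upsilon^{-1}\in\Gamma_\Upsilon(\cR\otimes_{L(t)}\cR)$. Its defining equations hold in $\fracTate\otimes_{L(t)}\fracTate$, which contains $\cR\otimes_{L(t)}\cR$.

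Put $R:=(\bA\otimes L^\sep)_\fp[\up^{-1}]$. Every non-zero $f\in L[t]$ is a unit in $R$, so $R$ is an $L(t)$-algebra. Both $\D_\fp$ and $\sigma\circ\D_\fp$ are then $L(t)$-algebra homomorphisms $\cR\to R$, using the $L^\sep$-rationality you established. Hence $a\otimes b\mapsto\D_\fp(a)\,\sigma(\D_\fp(b))$ is a well-defined $\bK$-algebra homomorphism $\cR\otimes_{L(t)}\cR\to R$. It sends $\Upsilon\boxtimes\Upsilon^{-1}$ to $\D_\fp(\Upsilon)\,\sigma(\D_\fp(\Upsilon))^{-1}=\varrho_{E,\fp}(\sigma)$.

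So $\varrho_{E,\fp}(\sigma)$ satisfies the $\bK$-equations of $\Gamma_\Upsilon$ in $R$. Its entries lie in $\bA_\fp\subseteq\bK_\fp\hookrightarrow R$, so $\varrho_{E,\fp}(\sigma)\in\Gamma_\Upsilon(\bK_\fp)\cap\GL_r(\bA_\fp)$. Only the trivial direction of ``preservation of relations'' is used: relations of $\Upsilon$ over $L(t)$ are relations of $\D_\fp(\Upsilon)$ because $\D_\fp$ is an $L(t)$-algebra map. No equality of groups, no $\fp$-adic Galois theory, and no injectivity statement is needed.
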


Theorem \ref{thm:Galois-representation} is made more explicit in \cite[Rem.~2.18]{am:naamcigp}: For a non-zero prime ideal $\fp$ of $\bA$, let $\up$ be the monic generator of $\fp$, and let $(\bA\otimes\CI)_\fp$ be the $\fp$-adic completion $\varprojlim\limits_n(\bA/\fp^{n+1}\otimes \CC_\infty)$. 
Let $\D_\fp:\CI\cs{t}\to (\bA\otimes\CI)_\fp$ be the Taylor series expansion at $\fp$, i.e.,~
\[  \D_{\fp}(h):= \sum_{n=0}^\infty \up^n \left( \partial_{\up}^{(n)}(h)\textrm{ mod }\fp\right). \]
Note, by \cite[Theorem 9.6]{am:naamcigp}, the coefficients of $\up^n$ in $\D_\fp(\Upsilon)$ are indeed in $\Fpp\otimes L^{\sep}$, and for $\gamma\in \Gal(L^\sep/L)$, we have
\begin{equation}\label{eq:explicit-Galois-represenation}
     \varrho_{E,\fp}(\gamma) = \D_\fp(\Upsilon)\cdot \gamma(\D_\fp(\Upsilon))^{-1}. 
\end{equation}

\section{Galois groups of prolongations and transcendence}\label{S:Galois groups of prolongations}

In this section, we determine Galois groups of prolongations of uniformizable abelian $t$-modules, and specifically determine their dimensions. We further apply this to obtain transcendence results for quasi-periods and coordinates of periods.

\subsection{Galois group of prolongations}

For a uniformizable abelian $t$-module $(E, \phi)$ defined over $L$, we use the notations $\mot_E$, $\Gamma_{\mot_E}$, $\Upsilon_E$, to represent the $t$-motive of $E$, its motivic Galois group, and a rigid analytic trivialization of $\mot_E$, respectively. Furthermore, $\varrho_{E,\fp}:\Gal(L^\sep/L)\to \GL_r(\bA_\fp)$ represents the $\fp$-adic Galois representation of $E$ with respect to $\Upsilon_E$ as described in the lines after Theorem \ref{thm:Galois-representation}. Accordingly, for its prolongations $\rho_n E$ ($n\geq 0$), we have the corresponding objects 
$\mot_{\rho_n E}=\rho_n\mot_E$, $\Gamma_{\rho_n\mot_E}$, $\prol{n}(\Upsilon)$, and $\varrho_{\rho_nE,\fp}:\Gal(L^\sep/L)\to \GL_{r(n+1)}(\bA_\fp)$. Further, recall the notion of the $n$-th prolongation $\rP{n}(\Gamma_{\mot_E})$ of 
$\Gamma_{\mot_E}$ from Definition~\ref{D:proAGS}.

\medskip

The first goal is to give an upper bound on the Galois group $\Gamma_{\rho_n\mot_E}$.

Let $\fracTate=\operatorname{Frac}(\CI\cs{t})$ be the field of fractions of the Tate algebra. For two matrices $C,D\in \GL_s(\fracTate)$, we denote by $C\boxtimes D$ the matrix in $\GL_s(\fracTate\otimes_{L(t)}\fracTate)$ with $(i,j)$-entry $\sum_{l=1}^s C_{il}\otimes D_{lj}$.

\begin{Lemma}\label{lem:description-of-Gamma-Upsilon}
    Let $(E,\phi)$ be a uniformizable abelian $t$-module of rank $r$ defined over $L$, and $\Upsilon$ a rigid analytic trivialization of the $t$-motive $\mot_E$.
    The difference Galois group $\Gamma_\Upsilon$ is the smallest closed subgroup $G$ of $\GL_{r,\bK}$ defined over $\bK$ such that $\Upsilon\boxtimes \Upsilon^{-1}\in G(\fracTate\otimes_{L(t)} \fracTate)$.
\end{Lemma}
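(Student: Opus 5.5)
This is Papanikolas' description of the difference Galois group as the torsor/stabilizer group, and I would prove it following that argument (cf.~\cite[\S4]{mp:tdadmaicl}), translated from $\Psi$ to $\Upsilon$.

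\textbf{Setup.} Let $\cR=L(t)[\Upsilon,\Upsilon^{-1}]$. Consider the $L(t)$-algebra homomorphism
\[ \mu: L(t)[X,1/\det X]\to \fracTate,\quad X\mapsto \Upsilon , \]
with kernel $\mathfrak{P}$. The key ingredient is that the constants of $\fracTate$ under $\tau$ are $\bK$; this holds because $\fracTate^{\tau}=\bK$. I would then introduce the variety
\[ Z_\Upsilon=\Spec(L(t)[X,1/\det X]/\mathfrak{P}),\]
which is a torsor, and define $G_0\subseteq \GL_{r,\bK}$ as the smallest closed $\bK$-subgroup with $\Upsilon\boxtimes\Upsilon^{-1}\in G_0(\fracTate\otimes_{L(t)}\fracTate)$.

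\textbf{Step 1: $\Gamma_\Upsilon\subseteq G_0$.} Substitute $X\mapsto \Upsilon\boxtimes X$ into $\mathfrak{P}$. The relation $\Upsilon^{(1)}=\Upsilon\Theta_E^{-1}$, where $\Theta_E$ has entries in $L[t]$, shows that the ideal of $\bK$-relations satisfied by $\Upsilon\boxtimes\Upsilon^{-1}$ is $\tau$-stable. By the standard Papanikolas argument (\cite[Thm.~4.3.1, Prop.~4.3.2]{mp:tdadmaicl}), $\mathfrak{P}$ is determined by these relations, and an element of $\Gamma_\Upsilon(\sfA)$ corresponds exactly to a matrix $C\in\GL_r(\sfA)$ such that $X\mapsto C^{-1}\cdot X$ (matching the convention $C_\Upsilon(\alpha)=\Upsilon\alpha(\Upsilon)^{-1}$) preserves $\mathfrak{P}\otimes\sfA$. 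So $\Gamma_\Upsilon$ is the stabilizer of $Z_\Upsilon$.

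\textbf{Step 2: stabilizer and torsor.} Since $Z_\Upsilon$ is a $\Gamma_\Upsilon$-torsor over $L(t)$, we have $Z_\Upsilon\times_{L(t)} Z_\Upsilon\cong Z_\Upsilon\times \Gamma_\Upsilon$ via $(a,b)\mapsto (a, a^{-1}b)$ (up to orientation). The two-factor point $(\Upsilon\otimes1,1\otimes\Upsilon)$ of $Z_\Upsilon\times Z_\Upsilon$ over $\fracTate\otimes_{L(t)}\fracTate$ therefore maps to $\Upsilon\boxtimes\Upsilon^{-1}$ (with the order of factors fixed by the convention of $C_\Upsilon$). This yields $\Upsilon\boxtimes\Upsilon^{-1}\in\Gamma_\Upsilon(\fracTate\otimes\fracTate)$, hence $G_0\subseteq\Gamma_\Upsilon$.

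\textbf{Step 3: $\Gamma_\Upsilon\subseteq G_0$ via minimality.} Conversely, let $f\in\bK[\GL_r]$ vanish on $G_0$. Then $f(\Upsilon\boxtimes\Upsilon^{-1})=0$. Writing this out with a $\bK$-basis argument, I would show that for every $\sfA$ and every $\alpha\in\Gamma_\Upsilon(\sfA)$, $f(C_\Upsilon(\alpha))=0$. The idea is that $\alpha$ acts on the second tensor factor, and the ring homomorphism $\fracTate\otimes_{L(t)}\fracTate\supseteq\cR\otimes\cR\to \cR\otimes\sfA$, given by $a\otimes b\mapsto a\cdot\alpha(b)$ after base change, sends $\Upsilon\boxtimes\Upsilon^{-1}$ to $\Upsilon\alpha(\Upsilon)^{-1}=C_\Upsilon(\alpha)$. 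This requires $\Upsilon\boxtimes\Upsilon^{-1}$ to have entries in $\cR\otimes_{L(t)}\cR$, which is clear. Hence the points of $\Gamma_\Upsilon$ lie in $G_0$, and the two groups coincide.

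Step 3 is the cleanest route. The main obstacle is Step 2, namely establishing the torsor property and, in particular, that $\fracTate\otimes_{L(t)}\fracTate$ sees the universal point, which rests on $\fracTate^\tau=\bK$. There I would simply cite Papanikolas' Theorem 4.3.1 and the comparison $\Gamma_\Psi=\Gamma_\Upsilon$ from \eqref{E:ratcomotive}.
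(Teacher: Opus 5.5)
Your argument can be made to work, but it takes a different route from the paper's. The paper re-runs no Picard--Vessiot argument. It cites Papanikolas' description of $\Gamma_\Psi$ as the smallest closed subgroup containing $\Psi^{-1}\boxtimes\Psi$ and transports it through $\Psi=(\Upsilon^{\tr})^{(1)}$. The two embeddings are related by the automorphism $X\mapsto (X^{-1})^{\tr}$ of $\GL_{r,\bK}$, which sends $\Psi^{-1}\boxtimes\Psi$ to $\Upsilon^{(1)}\boxtimes(\Upsilon^{-1})^{(1)}$. This equals $\Upsilon\boxtimes\Upsilon^{-1}$ because $\Upsilon^{(1)}=\Upsilon\Theta_E^{-1}$ and $\Theta_E\otimes 1=1\otimes\Theta_E$ in $\fracTate\otimes_{L(t)}\fracTate$.

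You instead prove two inclusions directly for $\Upsilon$. Your Step 3 is correct and self-contained. The $L(t)$-balanced ring map $\cR\otimes_{L(t)}\cR\to\cR\otimes_{\bK}\sfA$, $a\otimes b\mapsto a\,\alpha(b)$, sends $\Upsilon\boxtimes\Upsilon^{-1}$ to $C_\Upsilon(\alpha)$. Injectivity of $\sfA\to\cR\otimes_\bK\sfA$ and of $\cR\otimes_{L(t)}\cR\to\fracTate\otimes_{L(t)}\fracTate$ then gives $\Gamma_\Upsilon\subseteq G_0$. Note that this, not Step 1, is the inclusion $\Gamma_\Upsilon\subseteq G_0$. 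Step 1 only identifies $\Gamma_\Upsilon$ with the stabilizer of $Z_\Upsilon$ under $X\mapsto C^{-1}X$, which is the definition plus $\cR^\tau=\bK$; its heading is mislabeled. What your route buys is an explicit comparison between the automorphism-defined $\Gamma_\Upsilon$ and the ``smallest subgroup'' description. It also isolates exactly which part of Papanikolas' theory is needed. The paper's route buys brevity: after the citation, it is pure bookkeeping of conventions.

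The substantive inclusion $G_0\subseteq\Gamma_\Upsilon$ (your Step 2) is still delegated to Papanikolas, and you should phrase that citation carefully. Writing ``$Z_\Upsilon$ is a $\Gamma_\Upsilon$-torsor'', with $\Gamma_\Upsilon$ the automorphism group, already presupposes the lemma; your Step 3 would then be superfluous, and the paper's one-line transport would suffice. What you may legitimately cite is narrower. Papanikolas' group, defined as the smallest closed subscheme through $\Psi^{-1}\boxtimes\Psi$, stabilizes $Z_\Psi$ under right multiplication. Transport this via $\Upsilon=\Psi^{\tr}\Theta_E$. Right multiplication by $D$ on $Z_\Psi$ becomes left multiplication by $D^{\tr}=C^{-1}$ on $Z_\Upsilon$, where $C=(D^{-1})^{\tr}$. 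Moreover $\Upsilon\boxtimes\Upsilon^{-1}=\bigl((\Psi^{-1}\boxtimes\Psi)^{-1}\bigr)^{\tr}$, so Papanikolas' group corresponds exactly to $G_0$. Step 1 then yields $G_0\subseteq\Gamma_\Upsilon$. You dismiss this translation as ``up to orientation'', but it is the entire content of the paper's proof and should be written out.
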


\begin{proof}
In \cite[\S 4.2.1]{mp:tdadmaicl}, Papanikolas showed that the difference Galois group $\Gamma_\Psi$ is the smallest closed subgroup of $\GL_{r,\bK}$ containing  $\Psi^{-1}\boxtimes \Psi$, where $\Psi$ is a rigid analytic trivialization of the $t$-comotive.
Our embedding of $\Gamma_\Upsilon$ into $\GL_{r,\bK}$, however corresponds to the matrix 
$\Upsilon^{-1}=((\Psi^{-1})^{\tr})^{(-1)}$ which is obtained from the embedding used for $\Gamma_\Psi$ by the homomorphism $X\mapsto (X^{-1})^{\tr}$.
So we get from Papanikolas' result that $\Gamma_\Upsilon$ is the smallest closed subgroup containing $\Upsilon^{(1)}\boxtimes (\Upsilon^{-1})^{(1)}=\left(\Upsilon\boxtimes \Upsilon^{-1}\right)^{(1)}$.
From the difference equation \eqref{eq:difference-equation-for-Upsilon} for $\Upsilon$, we see that $\left(\Upsilon\boxtimes \Upsilon^{-1}\right)^{(1)}=\Upsilon\boxtimes \Upsilon^{-1}$, since the tensor product is over $L(t)$.
This concludes the proof.
\end{proof}

\begin{Theorem}\label{thm:upper-bound-on-group-of-prolongation}
    Let $(E,\phi)$ be a uniformizable abelian $t$-module and $\mot=\mot_E$ be its corresponding $t$-motive and for $n \geq 0$, $\rho_n \mot$ be its $n$-th prolongation.
    Then the Galois group of the prolongation, $\Gamma_{\rho_n \mot}$, is contained in $\rP{n}(\Gamma_\mot)$, the prolongation of the Galois group of $\mot$,  as a subgroup.
\end{Theorem}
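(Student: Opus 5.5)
The plan is to use the characterization of the difference Galois group in Lemma~\ref{lem:description-of-Gamma-Upsilon}, applied twice: once to $\Upsilon$ and once to the rigid analytic trivialization $\prol{n}(\Upsilon)$ of $\rho_n\mot$ from Theorem~\ref{T:RATpro}. Since $\Gamma_{\rho_n\mot}\cong\Gamma_{\prol{n}(\Upsilon)}$ is the smallest closed subgroup of $\GL_{r(n+1),\bK}$ containing $\prol{n}(\Upsilon)\boxtimes\prol{n}(\Upsilon)^{-1}$, it suffices to show one thing. Namely, I will show that this matrix is a $(\fracTate\otimes_{L(t)}\fracTate)$-point of $\rP{n}(\Gamma_\Upsilon)$, viewed inside $\GL_{r(n+1)}$ via the block-matrix embedding of Section~\ref{SubS:prolongation-as-subgroup-of-GL}. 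Here $\rP{n}(\Gamma_\Upsilon)$ is taken over $\cK=\bK$ with the hyperderivatives $\hdte{*}$. I would also note that $\rP{n}$ preserves closed immersions, since a surjection $R\to R/I$ induces a surjection on $HS^n$ by Remark~\ref{rem:explicit-description-of-HS}. Hence $\rP{n}(\Gamma_\Upsilon)$ is a closed subgroup of $\rP{n}(\GL_r)\subseteq\GL_{r(n+1)}$, and minimality then gives the claimed containment.

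First I will equip $B:=\fracTate\otimes_{L(t)}\fracTate$ with a truncated higher derivation extending $\hdte{*}$ on $\bK$. The hyperderivatives $\hdte{k}$ extend from $\CI\cs{t}$ to $\fracTate$ by the quotient rule, and they stabilize $L(t)$. So the truncated Taylor maps $\D:\fracTate\to\fracTate[T]/(T^{n+1})$ are ring homomorphisms lying over $\D:L(t)\to L(t)[T]/(T^{n+1})$. Tensoring gives a ring homomorphism
\[
\D\otimes\D:\ B\longrightarrow \fracTate[T]/(T^{n+1})\otimes_{L(t)[T]/(T^{n+1})}\fracTate[T]/(T^{n+1})\cong B[T]/(T^{n+1}),
\]
which reduces to the identity modulo $T$. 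By the remark following Definition~\ref{D:hyperder} and Rosen's Remark~1.5, this is a higher derivation of order $n$ on $B$ extending the one on $\bK$. Checking that this identification is well defined is the step I expect to need the most care. The key point is that the two structure maps $L(t)[T]/(T^{n+1})\to\fracTate[T]/(T^{n+1})$ are both $\D$, so the $T$-adic truncation commutes with the tensor product.

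Next, writing $\Upsilon\boxtimes\Upsilon^{-1}=(\Upsilon\otimes1)\cdot(1\otimes\Upsilon^{-1})$ as a product in $\GL_r(B)$, I will use that $\prol{n}$ is a ring homomorphism on matrices and is compatible with both inclusions $\fracTate\to B$. This gives $\prol{n}(\Upsilon)\boxtimes\prol{n}(\Upsilon)^{-1}=\prol{n}(\Upsilon\boxtimes\Upsilon^{-1})$. By Lemma~\ref{lem:description-of-Gamma-Upsilon}, $\Upsilon\boxtimes\Upsilon^{-1}\in\Gamma_\Upsilon(B)$. The map $\prol{n}:\Gamma_\Upsilon(B)\to\rP{n}(\Gamma_\Upsilon)(B)$ of \eqref{eq:gammatohyperoverB} is induced by the $\bK$-algebra map $\D:B\to\tilde B_n$ together with $\rP{n}(G)(B)\cong G(\tilde B_n)$ from Proposition~\ref{prop:rosens-equality}. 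In block form it is exactly the matrix operator $\prol{n}$. Concretely, for $Q\in I_{\Gamma_\Upsilon}$ one has $Q^{\D}(\D(X))=\D(Q(X))=0$, so $\prol{n}(\Upsilon\boxtimes\Upsilon^{-1})$ lies in $\rP{n}(\Gamma_\Upsilon)(B)$. This completes the argument.
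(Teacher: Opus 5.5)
Your proposal is correct and follows the paper's proof essentially step for step. You apply Lemma~\ref{lem:description-of-Gamma-Upsilon} to the rigid analytic trivialization $\prol{n}(\Upsilon)$, push $\Upsilon\boxtimes\Upsilon^{-1}\in\Gamma_\Upsilon(\fracTate\otimes_{L(t)}\fracTate)$ into $\rP{n}(\Gamma_\Upsilon)(\fracTate\otimes_{L(t)}\fracTate)$ via $\prol{n}$, and use that $\prol{n}$ is a ring homomorphism to identify $\prol{n}(\Upsilon\boxtimes\Upsilon^{-1})$ with $\prol{n}(\Upsilon)\boxtimes\prol{n}(\Upsilon)^{-1}$. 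The extra points you check are left implicit in the paper: the higher derivation $\D\otimes\D$ on $\fracTate\otimes_{L(t)}\fracTate$ (which works with the tensor product over $L(t)[T]/(T^{n+1})$ via the coefficientwise inclusions), and the fact that $\rP{n}(\Gamma_\Upsilon)$ is closed in $\GL_{r(n+1)}$.
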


 \begin{proof}
    Let $\Upsilon:=\Upsilon_E$ be a rigid analytic trivialization of $\mot=\mot_E$.
    By Theorem \ref{T:RATpro}, the matrix $\prol{n}(\Upsilon)$ is a rigid analytic trivialization for $\rho_n \mot$, and by Lemma \ref{lem:description-of-Gamma-Upsilon} applied to $\rho_n E$, the Galois group $\Gamma_{\rho_n \mot}=\Gamma_{\prol{n}(\Upsilon)}$ is the smallest closed $\bK$-subgroup $G$ of $\GL_{r(n+1),\bK}$ such that $\prol{n}(\Upsilon)\boxtimes \prol{n}(\Upsilon)^{-1}\in G(\fracTate\otimes_{L(t)}\fracTate)$.
  
    Since $\Upsilon\boxtimes\Upsilon^{-1}\in \Gamma_\Upsilon(\fracTate\otimes_{L(t)}\fracTate)$ (again by Lemma \ref{lem:description-of-Gamma-Upsilon}), the map $\prol{n}$ provides that
    \[ \prol{n}(\Upsilon\boxtimes\Upsilon^{-1})\in \rP{n}(\Gamma_\Upsilon)(\fracTate\otimes_{L(t)}\fracTate).\]
    However, since $\prol{n}$ is a homomorphism of algebras, we have
    \[ \prol{n}(\Upsilon\boxtimes\Upsilon^{-1})=\prol{n}(\Upsilon)\boxtimes \prol{n}(\Upsilon)^{-1}.\]
    So, $\prol{n}(\Upsilon)\boxtimes \prol{n}(\Upsilon)^{-1}\in \rP{n}(\Gamma_\Upsilon)(\fracTate\otimes_{L(t)}\fracTate)$, and hence $\Gamma_{\rho_n\mot}\subseteq \rP{n}(\Gamma_\Upsilon)=\rP{n}(\Gamma_\mot)$.
 \end{proof}

For the lower bound, we need some lemmas on the $\fp$-adic Galois representation of a prolongation.

\begin{Lemma}\label{lem:repr-of-prol}
    Let $(E,\phi)$ be a uniformizable abelian $t$-module over $L$, let $n\geq 0$, and let $\fp$ be a non-zero prime of $\bA$. The $\fp$-adic Galois representation 
    \[ \varrho_{\rho_nE,\fp}:\Gal(L^\sep/L)\to \GL_{r(n+1)}(\bA_\fp) \] is given by
    \[ \varrho_{\rho_nE,\fp} = \prol{n}\circ \varrho_{E,\fp}. \]
\end{Lemma}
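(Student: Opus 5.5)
The plan is to use the explicit description \eqref{eq:explicit-Galois-represenation} of the $\fp$-adic Galois representation in terms of a rigid analytic trivialization, applied to both $E$ and $\rho_nE$. By Theorem~\ref{T:RATpro}, $\prol{n}(\Upsilon)$ is a rigid analytic trivialization of $\rho_n\mot_E=\mot_{\rho_nE}$, and this is the one we take in the setup of the section. Hence for $\gamma\in\Gal(L^\sep/L)$,
\[ \varrho_{\rho_nE,\fp}(\gamma)=\D_\fp(\prol{n}(\Upsilon))\cdot \gamma\bigl(\D_\fp(\prol{n}(\Upsilon))\bigr)^{-1}. \]
The goal is to rewrite the right-hand side as $\prol{n}\bigl(\D_\fp(\Upsilon)\,\gamma(\D_\fp(\Upsilon))^{-1}\bigr)$. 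Here $\prol{n}$ on $(\bA\otimes\CI)_\fp$ is defined via the hyperderivatives $\hdte{k}$, extended continuously to this completion.

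\textbf{Step 1: $\D_\fp$ commutes with $\hdte{k}$.} The map $\D_\fp:\CI\cs{t}\to(\bA\otimes\CI)_\fp$ is the $\fp$-adic completion map, i.e.\ the inclusion of $\CI\cs{t}$ into its $\fp$-adic completion. It is an injective ring homomorphism that is continuous for the $\fp$-adic topology on $\CI[t]$, and it is the identity on $\CI[t]$. The operators $\hdte{k}$ on $\CI[t]$ are $\fp$-adically continuous: $\hdte{k}(\up^{m}f)\in \fp^{m-k}$ by the Leibniz rule. So they extend uniquely to $(\bA\otimes\CI)_\fp$. On $\CI\cs{t}$, the hyperderivatives are given coefficientwise and agree with the restriction of the extended ones. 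One checks this by density of $\CI[t]$ in $\CI\cs{t}$ for the Gauss norm, together with the compatibility of Gauss-norm convergence with $\fp$-adic convergence modulo each $\fp^{m}$, since $\CI\cs{t}/\fp^m\cong \CI[t]/\fp^m$. This yields $\D_\fp\circ\hdte{k}=\hdte{k}\circ\D_\fp$ and hence $\D_\fp(\prol{n}(\Upsilon))=\prol{n}(\D_\fp(\Upsilon))$. I expect this compatibility to be the main technical point; the rest is formal.

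\textbf{Step 2: $\gamma$ commutes with $\hdte{k}$.} The Galois action on $(\bA\otimes\CI)_\fp$ is through the $L^\sep$-coefficients, that is, on $(\bA\otimes L^\sep)_\fp$, where the coefficients of $\D_\fp(\Upsilon)$ lie by \cite[Theorem 9.6]{am:naamcigp}. It is $\bA$-linear and continuous, while $\hdte{k}$ acts only on the $\bA$-part. Therefore $\gamma\circ\hdte{k}=\hdte{k}\circ\gamma$, and so $\gamma(\prol{n}(X))=\prol{n}(\gamma(X))$.

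\textbf{Step 3: combine.} Since $\prol{n}$ is a ring homomorphism on matrices, it is multiplicative and sends inverses to inverses. Thus
\[ \varrho_{\rho_nE,\fp}(\gamma)=\prol{n}(\D_\fp(\Upsilon))\,\prol{n}\bigl(\gamma(\D_\fp(\Upsilon))\bigr)^{-1}=\prol{n}\bigl(\varrho_{E,\fp}(\gamma)\bigr). \]
Finally, $\prol{n}$ on $\GL_r(\bA_\fp)$ built from $\hdte{k}$ restricts to the one induced from $(\bA\otimes\CI)_\fp$, since $\bA_\fp\subseteq(\bA\otimes\CI)_\fp$ compatibly with the hyperderivatives. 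This proves the lemma.
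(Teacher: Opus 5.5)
Your proposal is correct and follows essentially the same route as the paper. Like the paper, you use Theorem~\ref{T:RATpro} and formula \eqref{eq:explicit-Galois-represenation} to reduce the lemma to two compatibilities. The first is that $\prol{n}$ commutes with $\D_\fp$, by density of $\CI[t]$ in $\CI\cs{t}$ and continuity. The second is that $\prol{n}$ commutes with $\gamma$, since $\hdte{k}$ acts on the $\bA$-factor while $\gamma$ acts on the $L^\sep$-factor, extended by continuity to $(\bA\otimes L^\sep)_\fp$. You then conclude, as the paper does, from the fact that $\prol{n}$ is a ring homomorphism. Your Step~1 spells out the continuity argument (via $\fp$-adic continuity of $\hdte{k}$ and $\CI\cs{t}/\fp^m\cong\CI[t]/\fp^m$) where the paper simply quotes the description of $\D_\fp$ as the continuous extension of the inclusion $\CI[t]\hookrightarrow(\bA\otimes\CI)_\fp$.
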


\begin{proof}
    Using Theorem \ref{T:RATpro} and Equation \eqref{eq:explicit-Galois-represenation}, our task is to show that for all $\gamma\in \Gal(L^\sep/L)$,
    \begin{equation}\label{eq:identity-for-gal-rep}
        \prol{n}\left( \D_\fp(\Upsilon)\cdot \gamma(\D_\fp(\Upsilon))^{-1} \right) = \D_\fp\bigl(\prol{n}(\Upsilon)\bigr)\cdot \gamma\left(\D_\fp\bigl(\prol{n}(\Upsilon)\bigr)\right)^{-1}.
    \end{equation}
    Since $\prol{n}$ is a ring homomorphism, this boils down to showing that $\prol{n}$ commutes with $\D_\fp$ and with $\gamma$, i.e., that the following two diagrams commute:
    
\[    \xymatrix{ \Mat_r(\CI\cs{t}) \ar[r]^(.47){\D_\fp} \ar[d]^{\prol{n}} &  \Mat_r((\bA\otimes \CI)_\fp) \ar[d]^{\prol{n}} \\
    \Mat_{r(n+1)}(\CI\cs{t}) \ar[r]^(.45){\D_\fp} & \Mat_{r(n+1)}((\bA\otimes \CI)_\fp)        
    }\]
    
    and
    
    \[ \xymatrix{ \Mat_r((\bA\otimes L^\sep)_\fp) \ar[r]^{\gamma} \ar[d]^{\prol{n}} &  \Mat_r((\bA\otimes L^\sep)_\fp) \ar[d]^{\prol{n}} \\
    \Mat_{r(n+1)}((\bA\otimes L^\sep)_\fp) \ar[r]^{\gamma} & \Mat_{r(n+1)}((\bA\otimes L^\sep)_\fp)        
    }\]
    
For the first diagram, we recognize that by \cite[Remark 2.18 \& Prop.~2.3]{am:naamcigp}, the map $\D_{\fp}$ is the continuous extension (with respect to the Gauss norm) of the inclusion $\CI[t]=\bA\otimes \CI \hookrightarrow (\bA\otimes \CI)_\fp$ which is compatible with $\prol{n}$ by construction. As $\CI[t]$ is dense in $\CI\cs{t}$, the continuous extension is unique, and hence the whole diagram commutes.

The second diagram clearly commutes when we restrict to matrices with coefficients in $\bA\otimes L^\sep$, since $\prol{n}$ comprises of hyperderivatives $\hdte{k}$ which are $L^\sep$-linear extensions of the hyperderivatives on $\bA$, whereas the action of $\gamma$ is the $\bA$-linear extension of the natural action on $L^\sep$. 
As $\bA\otimes L^\sep$ is dense in the $\fp$-adic completion $(\bA\otimes L^\sep)_\fp$, and both $\gamma$ and $\prol{n}$ are extended continuously, also the given diagram commutes.
\end{proof}

\begin{Lemma}\label{lem:dense-prolongation-of-representation}
	Let $(E,\phi)$ be a uniformizable abelian $t$-module over $L$, and let $n\geq 0$. Further, let $\fp$ be a non-zero prime of $\bA$, and $G\subseteq \Gamma_{\mot_E}$ be a closed subgroup defined over $\bK$.
    If $\image(\varrho_{E,\fp})\cap G(\bK_\fp)$ is $\fp$-adically open in $G(\bK_\fp)$, then $\image(\varrho_{\rho_nE,\fp})\cap \rP{n}(G)(\bK_\fp)$
    is Zariski-dense in $\rP{n}(G)$.
\end{Lemma}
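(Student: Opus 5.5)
We will reduce the statement to Theorem~\ref{T:prolongated-subgroup-Zariski-dense}, applied with $\cK=\bK_\fp$, $\cO=\bA_\fp$, $\fm=\fp\bA_\fp$, the higher derivation $(\hdte{k})_{k\geq 0}$ on $\bK_\fp$ (which stabilizes $\bA_\fp$), and the group $G\subseteq \Gamma_{\mot_E}\subseteq\GL_{r,\bK}$ base-changed to $\bK_\fp$.

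\textbf{Step 1: choose $\Delta$.} Set
\[ \Delta:=\image(\varrho_{E,\fp})\cap G(\bK_\fp). \]
By Theorem~\ref{thm:Galois-representation}, $\image(\varrho_{E,\fp})\subseteq \GL_r(\bA_\fp)$, so $\Delta\subseteq G(\bK_\fp)\cap\GL_r(\bA_\fp)=G(\cO)$. It is a subgroup, and by hypothesis it is $\fp$-adically open in $G(\bK_\fp)$, hence also in $G(\cO)$. Note that $G$ is smooth, since it is a linear algebraic group over a field of characteristic $p$ that is defined over $\bK$. If needed, the smoothness hypothesis of Section~\ref{S:Zdensity} can be arranged by passing to the reduced subgroup, whose $\bK_\fp$-points are the same.

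\textbf{Step 2: identify the prolongated subgroup.} Theorem~\ref{T:prolongated-subgroup-Zariski-dense} gives that $\prol{n}(\Delta)$ is Zariski-dense in $\rP{n}(G)$; for $n=0$ this is trivially the same statement via Lemma~\ref{lem:Zariski-dense} and the connected-component argument. It remains to show that
\[ \prol{n}(\Delta)\subseteq \image(\varrho_{\rho_nE,\fp})\cap \rP{n}(G)(\bK_\fp). \]
By Lemma~\ref{lem:repr-of-prol}, $\varrho_{\rho_nE,\fp}=\prol{n}\circ\varrho_{E,\fp}$, so $\prol{n}(\Delta)\subseteq\image(\varrho_{\rho_nE,\fp})$. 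Since $\Delta\subseteq G(\bK_\fp)$, Definition~\ref{def:gammatohyper} gives $\prol{n}(\Delta)\subseteq\rP{n}(G)(\bK_\fp)$.

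Here one must check that the matrix map $\prol{n}$ built from $\hdte{k}$ in Section~\ref{Sub:RATsAndGaloisReps} coincides with the abstract $\prol{n}$ of Definition~\ref{def:gammatohyper}. This holds via the explicit description \eqref{eq:gammatohyper}, because the hyperderivation on $\bK_\fp$ is exactly the continuous extension of $\hdte{*}$. One must also check that prolongation commutes with the base change $\bK\to\bK_\fp$; this follows from Remark~\ref{rem:explicit-description-of-HS}, since the defining equations $S_\ell$ are built from $\D$ applied to coefficients in $\bK$.

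\textbf{Step 3: conclude.} A set containing a Zariski-dense subset is itself Zariski-dense, which gives the claim.

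The main obstacle I expect is the compatibility bookkeeping in Step 2, together with the smoothness requirement. Concretely, one must check that $(\hdte{k})$ on $\bA_\fp$ is a genuine higher derivation stabilizing $\bA_\fp$, for which I would use the standard extension to completions from Example~\ref{Example:Hyperd}. One must also check that the embedding $\rP{n}(G)\subseteq\GL_{r(n+1)}$ used in Section~\ref{S:Zdensity} matches the block embedding in which $\varrho_{\rho_nE,\fp}$ takes values.
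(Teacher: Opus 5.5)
Your argument is correct and is essentially the paper's proof: take $\Delta=\image(\varrho_{E,\fp})\cap G(\bK_\fp)\subseteq G(\bA_\fp)$, use Lemma~\ref{lem:repr-of-prol} to relate $\prol{n}(\Delta)$ to $\image(\varrho_{\rho_nE,\fp})\cap\rP{n}(G)(\bK_\fp)$, and conclude with Theorem~\ref{T:prolongated-subgroup-Zariski-dense}; the paper records equality of these two sets, while you use only the inclusion $\prol{n}(\Delta)\subseteq\image(\varrho_{\rho_nE,\fp})\cap\rP{n}(G)(\bK_\fp)$, which is all that density needs. Your smoothness aside is wrong as reasoning: a closed subgroup in characteristic $p$ need not be smooth, and over the imperfect field $\bK_\fp$ passing to the reduced subscheme need not give a smooth subgroup, so smoothness should simply be taken as part of the standing hypotheses of Section~\ref{S:Zdensity}, as the paper tacitly does.
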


\begin{proof}
Let $\Delta:=\varrho_{E,\fp}(\Gal(L^\sep/L))\cap G(\bK_\fp)$.
By Lemma \ref{lem:repr-of-prol}, $\varrho_{\rho_nE,\fp}=\prol{n}\circ \varrho_{E,\fp}$, and hence, 
\begin{equation*}
    \image(\varrho_{\rho_nE,\fp})\cap \rP{n}(G)(\bK_\fp) =
    \prol{n}\left( \image(\varrho_{E,\fp})\right)\cap \rP{n}(G)(\bK_\fp)
    = \prol{n}\left( \image(\varrho_{E,\fp})\cap G(\bK_\fp)\right)=\prol{n}(\Delta).
\end{equation*} 
The claim now follows from Theorem \ref{thm:Galois-representation}.
\end{proof}

\begin{Theorem}\label{T:galois-group-of-prolongation}
	Let $(E,\phi)$ be a uniformizable abelian  $t$-module over $L$, let $n\geq 0$, and denote by $\Gamma_{\mot_E, 0}$ the connected component of $1$ in $\Gamma_{\mot_E}$.
    Assume that there exists a non-zero prime $\fp\subset \bA$ such that $\image(\varrho_{E,\fp})\cap \Gamma_{\mot_E, 0}(\bK_\fp)$ is $\fp$-adically open in $\Gamma_{\mot_E, 0}(\bK_\fp)$. Then
    \[
    \Gamma_{\rho_{n}\mot_E} = \rP{n}(\Gamma_{\mot_E}),
    \]
	where $\Gamma_{\rho_{n}\mot_E}$ is the Galois group of the $n$-th prolongation $\rho_{n}\mot_E$ of $\mot_E$ and $\rP{n}(\Gamma_{\mot_E})$  is the $n$-th prolongation of the Galois group of $\mot_E$.
	In particular, $\dim(\Gamma_{\rho_{n}\mot_E})=(n+1)\cdot \dim(\Gamma_{\mot_E})$.
\end{Theorem}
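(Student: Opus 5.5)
The plan is to sandwich $\Gamma_{\rho_n\mot_E}$ between an upper and a lower bound that agree. The upper bound $\Gamma_{\rho_n\mot_E}\subseteq \rP{n}(\Gamma_{\mot_E})$ is exactly Theorem~\ref{thm:upper-bound-on-group-of-prolongation}. For the lower bound we use the Galois representation. By Theorem~\ref{thm:Galois-representation}, applied to the uniformizable abelian $t$-module $\rho_nE$ with rigid analytic trivialization $\prol{n}(\Upsilon)$ (Theorem~\ref{T:RATpro}), we have $\image(\varrho_{\rho_nE,\fp})\subseteq \Gamma_{\rho_n\mot_E}(\bK_\fp)$, with the embedding into $\GL_{r(n+1)}$ given by $C_{\prol{n}(\Upsilon)}$. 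It is consistent with the embedding of $\rP{n}(\Gamma_\Upsilon)$ described in Section~\ref{SubS:prolongation-as-subgroup-of-GL}, since both use $\prol{n}$ applied to $\Upsilon$, and by Lemma~\ref{lem:repr-of-prol} the representation is $\prol{n}\circ\varrho_{E,\fp}$. Since $\Gamma_{\rho_n\mot_E}$ is Zariski closed, it therefore suffices to show that $\image(\varrho_{\rho_nE,\fp})$ is Zariski dense in $\rP{n}(\Gamma_{\mot_E})$.

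First handle the identity component. Apply Lemma~\ref{lem:dense-prolongation-of-representation} with $G=\Gamma_{\mot_E,0}$, which is a closed subgroup defined over $\bK$. The hypothesis gives that $\image(\varrho_{E,\fp})\cap\Gamma_{\mot_E,0}(\bK_\fp)$ is $\fp$-adically open, hence $\prol{n}$ of it is Zariski dense in $\rP{n}(\Gamma_{\mot_E,0})$.

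Some care is needed here. Lemma~\ref{lem:dense-prolongation-of-representation} feeds into Theorem~\ref{T:prolongated-subgroup-Zariski-dense}, which requires a subgroup $\Delta\subseteq G(\cO)$, $\fm$-adically open in $G(\cO)$, with $\cK=\bK_\fp$ and $\cO=\bA_\fp$ equipped with $\hdte{*}$. This holds: $\hdte{*}$ stabilizes $\bA_\fp$ and $\bK_\fp$. The image lies in $\GL_r(\bA_\fp)$, so $\Delta\subseteq G(\bA_\fp)$, and openness in $G(\bK_\fp)$ implies openness in $G(\bA_\fp)$. The theorem also needs $G$ smooth, which holds because we are in a field situation for connected reduced algebraic groups. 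The only subtlety is whether $\Gamma_{\mot_E}$ is reduced in characteristic $p$. Papanikolas' theory gives that $\Gamma_\Psi$ is smooth, so we cite that. By Lemma~\ref{lem:connected-component}, $\rP{n}(\Gamma_{\mot_E,0})=\rP{n}(\Gamma_{\mot_E})_0$, so we obtain $\rP{n}(\Gamma_{\mot_E})_0\subseteq \Gamma_{\rho_n\mot_E}$.

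Next handle the components. Again by Lemma~\ref{lem:connected-component}, $\rP{n}(\Gamma_{\mot_E})/\rP{n}(\Gamma_{\mot_E})_0\cong \Gamma_{\mot_E}/\Gamma_{\mot_E,0}$, via the projection $\pi_{n,0}$. So it remains to show that $\Gamma_{\rho_n\mot_E}$ meets every component, i.e., that $\pi_{n,0}(\Gamma_{\rho_n\mot_E})=\Gamma_{\mot_E}$. For this we use the Tannakian picture. $\mot_E$ is a subquotient of $\rho_n\mot_E$; concretely, the projection onto the diagonal block $\prol{n}(\Upsilon)\mapsto\Upsilon$ realizes $\mot_E$ as a quotient via the successive-extension structure. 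Hence the restriction map $\Gamma_{\rho_n\mot_E}\to\Gamma_{\mot_E}$ is surjective, and this restriction is exactly $\pi_{n,0}$ in the chosen embeddings. Combined with the previous step, $\Gamma_{\rho_n\mot_E}$ contains the identity component and surjects onto the component group, so it equals $\rP{n}(\Gamma_{\mot_E})$.

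The dimension formula then follows from Proposition~\ref{prop:ses-and-dim-of-prolongations}, using smoothness of $\Gamma_{\mot_E}$.

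The main obstacle is the component step together with the compatibility of embeddings. One has to check that the surjection of Tannakian groups coincides with $\pi_{n,0}$ under $C_{\prol{n}(\Upsilon)}$ and $C_\Upsilon$. This is an explicit computation: $\prol{n}(\Upsilon)\,\alpha(\prol{n}(\Upsilon))^{-1}$ has diagonal blocks $\Upsilon\alpha(\Upsilon)^{-1}$. If this step proves awkward, the fallback is to note that all component issues disappear when $\Gamma_{\mot_E}$ is connected.
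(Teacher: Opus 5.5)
Your proposal is correct and follows the paper's proof. It uses the upper bound from Theorem~\ref{thm:upper-bound-on-group-of-prolongation}, Lemma~\ref{lem:dense-prolongation-of-representation} (via Theorem~\ref{T:prolongated-subgroup-Zariski-dense}) with $G=\Gamma_{\mot_E,0}$ plus Theorem~\ref{thm:Galois-representation} to get $\rP{n}(\Gamma_{\mot_E})_0\subseteq\Gamma_{\rho_n\mot_E}$, and the surjection $\Gamma_{\rho_n\mot_E}\twoheadrightarrow\Gamma_{\mot_E}$ together with Lemma~\ref{lem:connected-component} to handle the components. The only difference is cosmetic: the paper just compares the numbers of connected components, which spares you the check that this surjection equals $\pi_{n,0}$ under $C_{\prol{n}(\Upsilon)}$ and $C_\Upsilon$, though your block-diagonal computation settles that check correctly anyway.
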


\begin{proof}
By Theorem~\ref{thm:upper-bound-on-group-of-prolongation}, we already have
$\Gamma_{\rho_{n}\mot_E}\subseteq \rP{n}(\Gamma_{\mot_E})$, and by  Lemma \ref{lem:dense-prolongation-of-representation} applied to $G=\Gamma_{\mot_E,0}$, we have that 
$\image(\varrho_{\rho_nE,\fp})\cap \rP{n}(\Gamma_{\mot_E,0})(\bK_\fp)$ is Zariski-dense in $\rP{n}(\Gamma_{\mot_E,0})=\rP{n}(\Gamma_{\mot})_0$.
Since by Theorem~\ref{thm:Galois-representation}, this image is also contained in $\Gamma_{\rho_{n}\mot_E}$, we conclude $\Gamma_{\rho_{n}\mot_E}\supseteq \rP{n}(\Gamma_{\mot})_0$.

However, the projection $\rho_{n}\mot_E\to \mot_E$ induces a surjection $\Gamma_{\rho_{n}\mot_E}\to \Gamma_{\mot_E}$ which implies that the number of connected components of $\Gamma_{\rho_{n}\mot_E}$ has to be at least the number of connected components of $\Gamma_{\mot_E}$. Since by Lemma \ref{lem:connected-component}, $\rP{n}(\Gamma_{\mot_E})$ has the same number of connected components as $\Gamma_{\mot_E}$, we obtain
$\Gamma_{\rho_{n}\mot_E}=\rP{n}(\Gamma_{\mot_E})$.

The formula for the dimension is now simply a consequence of Proposition~\ref{prop:ses-and-dim-of-prolongations}.
\end{proof}

From what we proved in this section, we also get more evidence for the Mumford-Tate conjecture (see Subsection~\ref{Sub:BMGG}).

\begin{Theorem}\label{thm:evidence-Mumford-Tate-conjecture}
    Let $(E, \phi)$ be a uniformizable abelian $t$-module over a finite extension $L$ of $K$ and let $n \geq 0$. Assume that for every non-zero prime $\fp\subset \bA$, the image of the $\fp$-adic Galois representation $\varrho_{E,\fp}$ is $\fp$-adically open in $\Gamma_{\mot_E}(\bK_\fp)$.
    Then, the Mumford-Tate conjecture holds for the $n$-th prolongation $\rho_nE$ of $E$, i.e., for every non-zero prime $\fp\subset \bA$, the image of $\varrho_{\rho_nE,\fp}$ is Zariski-dense in $\Gamma_{\rho_n\mot_E}$.
\end{Theorem}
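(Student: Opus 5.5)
The plan is to fix an arbitrary non-zero prime $\fp\subset\bA$ and $n\geq 0$, and show two things. First, $\Gamma_{\rho_n\mot_E}=\rP{n}(\Gamma_{\mot_E})$. Second, $\image(\varrho_{\rho_nE,\fp})$ is Zariski-dense in this group. The first step reduces the hypothesis to the connected component, so that Theorem~\ref{T:galois-group-of-prolongation} applies. By assumption $\Delta:=\image(\varrho_{E,\fp})$ is $\fp$-adically open in $\Gamma_{\mot_E}(\bK_\fp)$. The identity component $\Gamma_{\mot_E,0}$ is Zariski-closed and of finite index, so $\Gamma_{\mot_E,0}(\bK_\fp)$ is a closed subgroup of $\Gamma_{\mot_E}(\bK_\fp)$ in the $\fp$-adic topology. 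Intersecting an open subgroup with it gives $\Delta\cap\Gamma_{\mot_E,0}(\bK_\fp)$, which is open in $\Gamma_{\mot_E,0}(\bK_\fp)$ for the subspace topology. Theorem~\ref{T:galois-group-of-prolongation} then yields $\Gamma_{\rho_n\mot_E}=\rP{n}(\Gamma_{\mot_E})$.

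The second step proves density. Write $G=\Gamma_{\mot_E}$. By Lemma~\ref{lem:repr-of-prol} we have $\image(\varrho_{\rho_nE,\fp})=\prol{n}(\Delta)$, and Theorem~\ref{thm:Galois-representation} places $\Delta$ inside $G(\bA_\fp)$. The group $G$ is smooth, since it is a linear algebraic group in the Tannakian setting; if necessary I would justify this via reducedness of the difference Galois group. We may therefore apply Theorem~\ref{T:prolongated-subgroup-Zariski-dense} with $\cK=\bK_\fp$, $\cO=\bA_\fp$, the hyperderivatives $\hdte{*}$ (which preserve $\bA_\fp$), and $\Delta$ an $\fm$-adically open subgroup of $G(\cO)$. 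This gives that $\prol{n}(\Delta)$ is Zariski-dense in $\rP{n}(G)=\Gamma_{\rho_n\mot_E}$. Alternatively, one can use Lemma~\ref{lem:dense-prolongation-of-representation} directly with $G=\Gamma_{\mot_E}$.

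Two matching issues need checking. The first is that the hypothesis concerns openness in $\Gamma_{\mot_E}(\bK_\fp)$, whereas Theorem~\ref{T:prolongated-subgroup-Zariski-dense} asks for openness in $G(\cO)$. This is harmless: $G(\cO)$ is itself open in $G(\bK_\fp)$, and $\Delta\subseteq G(\cO)$. The second is that the embedding of $\rP{n}(G)$ into $\GL_{r(n+1)}$ used for the Galois representation of $\rho_nE$ must coincide with the one in Section~\ref{SubS:prolongation-as-subgroup-of-GL}. This follows from $C_{\prol{n}(\Upsilon)}$ together with Theorem~\ref{T:RATpro}.

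I expect the only delicate point to be this bookkeeping: passing from openness in the full group to the hypotheses needed on the identity component and on $G(\cO)$, and confirming that the embeddings agree. Once those are checked, everything else is a direct combination of the earlier results.
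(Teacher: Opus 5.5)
Your proposal is correct and follows the paper's proof. You first pass from openness of $\image(\varrho_{E,\fp})$ in $\Gamma_{\mot_E}(\bK_\fp)$ to openness of its intersection with $\Gamma_{\mot_E,0}(\bK_\fp)$ and invoke Theorem~\ref{T:galois-group-of-prolongation} to get $\Gamma_{\rho_n\mot_E}=\rP{n}(\Gamma_{\mot_E})$, then obtain density from Lemma~\ref{lem:dense-prolongation-of-representation} with $G=\Gamma_{\mot_E}$ (equivalently, Lemma~\ref{lem:repr-of-prol} combined with Theorem~\ref{T:prolongated-subgroup-Zariski-dense}); the extra bookkeeping you flag (smoothness of $\Gamma_{\mot_E}$, openness in $G(\bA_\fp)$ versus $G(\bK_\fp)$, compatibility of the embeddings) is left implicit in the paper and does not change the argument.
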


\begin{proof}
    If $\image(\varrho_{E,\fp})$ is $\fp$-adically open in $\Gamma_{\mot_E}(\bK_\fp)$, then $\image(\varrho_{E,\fp})\cap \Gamma_{\mot_E,0}(\bK_\fp)$ is $\fp$-adically open in $\Gamma_{\mot_E,0}(\bK_\fp)$. So we conclude from Theorem \ref{T:galois-group-of-prolongation}, that $\Gamma_{\rho_n\mot_E}=\rP{n}(\Gamma_{\mot_E})$. Lemma \ref{lem:dense-prolongation-of-representation} applied to $G=\Gamma_{\mot_E}$ gives the desired conclusion.
\end{proof}

\subsection{Transcendence results}

As mentioned earlier, results on the dimension of Galois groups give us transcendence results on the quasi-periods  and the tractable coordinates of periods, and their hyperderivatives. Bear in mind how these elements are obtained from specializing rigid analytic trivializations (see Theorem \ref{T:Pap} and Theorem \ref{T:hyperRATnPer}). As previously mentioned, since the hyperdifferential operators $(\hde{k}_\theta)_{k\geq 0}$ on $K$ and $K_{\infty}$ extend to separable extensions, but not to inseparable extensions, we let $L$ be a separable extension of $K$ whenever we consider hyperderivatives with respect to $\theta$ of periods and quasi-periods.

\begin{Theorem}\label{Thm:transcendence-degrees}
Let $(E,\phi)$ be a uniformizable abelian $t$-module over a finite extension $L$ of $K$, and let $n \geq 0$.   
   Further, let 
	$\fp$ be a non-zero prime of $\bA$ and $\varrho_{E,\fp}:\Gal(L^{\mathrm{sep}}/L)\to \GL_r(\bA_\fp)$ the $\fp$-adic Galois representation associated to $E$.
	If the image of $\varrho_{E,\fp}$ is $\fp$-adically open in $\Gamma_{\mot_E}(\bK_\fp)$, then 
	\begin{equation}\label{E:tr1}
	\trdeg_{L}L(\prol{n}(\Upsilon_E^{(1)}) \mid_{t=\theta}) = (n+1)\cdot \trdeg_{L}L(\Upsilon_E ^{(1)}|_{t=\theta}). 
	\end{equation}
	In particular, let $L$ be a finite separable extension of $K$ and let $\mathcal{G}_n$ denote the field generated over $L$ by all quasi-periods  and tractable coordinates of the periods of $E$, and their hyperderivatives upto $n$. Then,
    \begin{equation}\label{E:tr1Q}
        \trdeg(\mathcal{G}_n:L) = (n+1)\cdot \trdeg(\mathcal{G}_0:L). 
          \end{equation} 
\end{Theorem}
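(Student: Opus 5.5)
The plan is to combine Papanikolas' Theorem~\ref{T:Pap} with Theorem~\ref{T:galois-group-of-prolongation}, and then use Theorem~\ref{T:hyperRATnPer} to translate from entries of rigid analytic trivializations to periods and quasi-periods.

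\textbf{Step 1: the Galois-group hypothesis.} Since $\image(\varrho_{E,\fp})$ is $\fp$-adically open in $\Gamma_{\mot_E}(\bK_\fp)$, its intersection with the open subgroup $\Gamma_{\mot_E,0}(\bK_\fp)$ is $\fp$-adically open in $\Gamma_{\mot_E,0}(\bK_\fp)$. This is the same remark as in the proof of Theorem~\ref{thm:evidence-Mumford-Tate-conjecture}. Hence Theorem~\ref{T:galois-group-of-prolongation} applies and gives
\[ \dim\Gamma_{\rho_n\mot_E}=(n+1)\dim\Gamma_{\mot_E}. \]

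\textbf{Step 2: equation \eqref{E:tr1}.} By Theorem~\ref{T:RATpro}, $\prol{n}(\Upsilon_E)$ is a rigid analytic trivialization of $\rho_n\mot_E=\mot_{\rho_nE}$. Moreover $\rho_nE$ is again a uniformizable abelian $t$-module over the algebraic extension $L$. Applying Theorem~\ref{T:Pap} to $\rho_n\mot_E$ gives
\[ \trdeg_L L\bigl(\prol{n}(\Upsilon_E)^{(1)}|_{t=\theta}\bigr)=\dim\Gamma_{\rho_n\mot_E}. \]
Applying it to $\mot_E$ gives $\trdeg_L L(\Upsilon_E^{(1)}|_{t=\theta})=\dim\Gamma_{\mot_E}$. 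It remains to identify $\prol{n}(\Upsilon_E)^{(1)}$ with $\prol{n}(\Upsilon_E^{(1)})$. This holds because $\hdte{k}$ acts coefficientwise in $t$, while the twist acts on the coefficients in $\CI$ only, so the two operations commute on $\CI\cs{t}$. Combining these with Step 1 gives \eqref{E:tr1}.

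\textbf{Step 3: equation \eqref{E:tr1Q}.} Let $L$ be finite separable. By the "moreover" part of Theorem~\ref{T:hyperRATnPer}, the $L$-linear span of the entries of $\prol{n}(\Upsilon_E^{(1)})|_{t=\theta}$ contains all quasi-periods and tractable coordinates of periods together with their hyperderivatives up to order $n$. Hence $\mathcal{G}_n\subseteq L(\prol{n}(\Upsilon_E^{(1)})|_{t=\theta})$.

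For the equality of transcendence degrees we also need the reverse inclusion, at least up to an algebraic extension. I expect this to be the main obstacle. The plan is to show that the entries of $\Upsilon_E^{(1)}|_{t=\theta}$ lie in the $L$-span of the quasi-periods and tractable coordinates, via the known description of $\Upsilon^{(1)}|_{t=\theta}$ in \cite{cn-mp:hpqpam}. The entries of $\prol{n}(\Upsilon_E^{(1)})$ are the $\hdte{k}$-derivatives of these entries. Evaluating at $t=\theta$ then relates them to $\hdthe{k}$-derivatives of the specialized values through a triangular chain-rule identity, with coefficients in $L$ given by the $\hdthe{j}$-derivatives of the $L$-coefficients.

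If this holds, the two fields coincide, possibly after passing to an algebraic extension, which does not change transcendence degrees. Then \eqref{E:tr1Q} follows from \eqref{E:tr1} by taking the case $n=0$ alongside general $n$. Should the full reverse inclusion for $\Upsilon$ be delicate, I would instead invoke the equality of spans stated in \cite[Theorem F]{cn-mp:hpqpam}, which is exactly the source of Theorem~\ref{T:hyperRATnPer}.
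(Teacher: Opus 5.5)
Your proposal is correct and follows the paper's proof. Equation \eqref{E:tr1} comes from Theorem~\ref{T:Pap} applied to $\rho_n\mot_E$ with trivialization $\prol{n}(\Upsilon_E)$, combined with Theorem~\ref{T:galois-group-of-prolongation}, and \eqref{E:tr1Q} comes from Theorem~\ref{T:hyperRATnPer}; your reduction to $\Gamma_{\mot_E,0}$, the identity $\prol{n}(\Upsilon_E)^{(1)}=\prol{n}(\Upsilon_E^{(1)})$, and the point that the reverse inclusion of fields is needed are details the paper leaves implicit. One caveat: your chain-rule sketch for that reverse inclusion is not an argument on its own, since the $\CI$-coefficients of $\Upsilon_E^{(1)}$ also depend on $\theta$ and so $t$- and $\theta$-hyperderivatives are not formally related; Step~3 is carried by your fallback, reading the span statement of \cite[Theorem F]{cn-mp:hpqpam} (the ``moreover'' part of Theorem~\ref{T:hyperRATnPer}) as an equality, exactly as the paper does.
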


\begin{proof}
    Equation \eqref{E:tr1} follows from Theorems~\ref{T:Pap} and \ref{T:galois-group-of-prolongation}. Then, \eqref{E:tr1Q} follows from Theorem~\ref{T:hyperRATnPer}. 
\end{proof}

Using a significant property of rigid analytic trivializations of uniformizable abelian $t$-modules, we have the following result. 
\begin{Theorem}\label{Thm:periodentries}
Let $(E,\phi)$ be a uniformizable abelian $t$-module of dimension $d$ and rank $r$ defined over a finite extension $L$ of $K$.
   Further, let 
	$\fp$ be a non-zero prime of $\bA$ and $\varrho_{E,\fp}:\Gal(L^{\mathrm{sep}}/L)\to \GL_r(\bA_\fp)$ the $\fp$-adic Galois representation associated to $E$. Suppose that the image of $\varrho_{E,\fp}$ is $\fp$-adically open in $\Gamma_{\mot_E}(\bK_\fp)$. If the tractable coordinates of periods of $E$ are algebraically independent over $L$, then all coordinates of these periods are algebraically independent over $L$.
\end{Theorem}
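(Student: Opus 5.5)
Fix a Jordan normal form for $\rd\phi(t)$ and choose $n\geq 0$ with $(\rd\phi(t)-\theta\Id_d)^{n}=0$; for instance $n$ equal to the maximal size of a Jordan block. The idea is to compare transcendence degrees on both sides: all coordinates of the periods lie in $\mathcal{G}_n$, and the dimension count of Theorem~\ref{Thm:transcendence-degrees} leaves no room for algebraic relations among them.

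First I would count the coordinates. Write $s$ for the number of Jordan blocks and $d_1,\dots,d_s$ for their sizes, so $d=\sum_k d_k$. Fix an $\bA$-basis $\lambda_1,\dots,\lambda_r$ of $\Lambda_E$. The coordinates of all periods are $L$-linear combinations of the $rd$ coordinates of the $\lambda_i$. Among these, $rs$ are tractable, and by hypothesis those $rs$ tractable coordinates are algebraically independent over $L$. By Theorem~\ref{T:hyperRATnPer} with $j=0$, every coordinate of every $\lambda_i$ lies in the $L$-span of the entries of $\prol{n}(\Upsilon_E^{(1)})|_{t=\theta}$. So the field $\mathcal{P}$ generated by all period coordinates is contained in $L(\prol{n}(\Upsilon_E^{(1)})|_{t=\theta})$.

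Second, I would use the explicit structure from \cite{am:ptmsv} and \cite[Theorem~F]{cn-mp:hpqpam}: the non-tractable coordinates in a Jordan block are expressed through hyperderivatives of the tractable ones. More precisely, the coordinate $k$ steps above the bottom of a block should equal, up to $L$-linear combinations of lower-order terms, $\hde{k}_\theta$ of the tractable (bottom) coordinate. Granting this, $\mathcal{P}$ is contained in the field generated over $L$ by the hyperderivatives $\hde{k}_\theta(x)$ of the tractable coordinates $x$ of each block, with $0\le k<d_{\text{block}}$. The claim then reduces to showing that these $\sum_k r d_k = rd$ elements are algebraically independent.

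Third, and this is the step I expect to be the main obstacle, I would prove that hyperderivatives of algebraically independent elements inside $\mathcal{G}_0$ stay independent. The available tool is equation~\eqref{E:tr1Q}, $\trdeg(\mathcal{G}_n:L)=(n+1)\trdeg(\mathcal{G}_0:L)$, combined with a general lemma about hyperdifferential fields. Let $F$ be a field with a higher derivation, $x_1,\dots,x_m$ part of a transcendence basis of $\mathcal{G}_0$ over $L$, and set $\mathcal{G}_n=\mathcal{G}_0(\hde{k}(\mathcal{G}_0):k\le n)$. I would show that $\mathcal{G}_n$ is generated algebraically over $L$ by the $\hde{k}_\theta$, $k\le n$, of a transcendence basis of $\mathcal{G}_0$. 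Since $L$ is separable over $K$, separably algebraic elements have hyperderivatives determined algebraically by those of the basis, by differentiating the minimal polynomial. Counting then gives at most $(n+1)\trdeg(\mathcal{G}_0:L)$ generators, and \eqref{E:tr1Q} forces all of them to be algebraically independent. The point to check carefully is separability of $\mathcal{G}_0$ over $L(\text{basis})$; I would choose a separating transcendence basis containing the tractable coordinates, which is possible because they are independent and everything lies in $K_\infty^{\sep}$.

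Finally, restricting to the subset $\{\hde{k}_\theta(x): x \text{ tractable},\ k<d_{\text{block}(x)}\}$ of this independent family gives $rd$ algebraically independent elements. Since the coordinates of the $\lambda_i$ generate the same field and there are exactly $rd$ of them, they are algebraically independent over $L$. Every other period is an $\bA$-combination of the $\lambda_i$, and its coordinates are $L$-linear in those of the $\lambda_i$, so the statement for all periods follows.
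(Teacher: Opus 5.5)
Your route differs from the paper's: you work on the $\theta$-side with \eqref{E:tr1Q}, while the paper stays on the $t$-side. Your Step~2 is where this breaks. What \cite[Prop.~3.28]{cn-mp:hpqpam} (used in the paper's proof) provides is an element $\boldsymbol g$ of the $\bA$-span of the rows of $\Psi^{-1}$ with the following properties. The values $g_1|_{t=\theta},\dots,g_m|_{t=\theta}$ are the tractable coordinates, and $\Span_L(\lambda)\subseteq\Span_L\{\hde{j}_t(g_i)|_{t=\theta}\}$. These are $t$-hyperderivatives taken \emph{before} specializing; they are entries of $\prol{n}(\Psi^{-1})$ and are governed by \eqref{E:tr1}. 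Passing to $\hde{k}_\theta(g_i|_{t=\theta})$ adds chain-rule terms coming from differentiating the coefficients of $g_i$ in $\theta$. These are in general not $L$-combinations of lower-order terms of the same block, so your Step~2 is false as a general statement. Concretely, for a Drinfeld module $(E,\phi)$ the formula for $\rho_1\phi(t)$ gives $\Exp_{\rho_1E}=\Exp_E\oplus\Exp_E$. Hence for $\omega\in\Lambda_E$, $(\omega,0)$ is a period of $\rho_1E$ whose non-tractable coordinate is $0$. On the other hand $\hde{1}_\theta(\omega)\notin L\omega$ (e.g.\ for $E=C$, by hypertranscendence of $\tilde\pi$). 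Your final counting step also needs the reverse inclusion, which again rests on Step~2.

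The more serious gap is in Step~3. Transferring independence from $\mathcal G_0$ to the hyperderivatives requires the tractable coordinates to extend to a \emph{separating} transcendence basis of $\mathcal G_0/L$, and your justification does not give this. The fact $\mathcal G_0\subseteq K_\infty^{\sep}$ only shows that \emph{some} separating transcendence basis exists, not one containing a prescribed algebraically independent set. The obstruction is $p$-th powers, since $\hde{k}(x^p)=0$ for $0<k<p$.

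This really happens under the open-image hypothesis. Let $E=C^{\otimes p}\oplus\phi$ with $\phi$ a rank-$2$ Drinfeld module without CM, normalized as in Section~\ref{S:non-isogenous-Drinfeld-modules}.
\begin{itemize}
\item $\Gamma_{\mot_E}\cong\GL_2$, embedded by $g\mapsto(\det(g)^p,g)$.
\item $\varrho_{E,\fp}=(\det\varrho_{\phi,\fp})^p\oplus\varrho_{\phi,\fp}$ has open image in $\Gamma_{\mot_E}(\bK_\fp)$ by Pink's theorem.
\item For the period $(\Pi_p,\omega_1)$, the tractable coordinates $\tilde\pi^p,\omega_1$ are algebraically independent.
\item By Legendre's relation, $\tilde\pi$ lies in a finite separable extension of $\mathcal G_0$, so $\tilde\pi^p$ belongs to no separating transcendence basis.
\item In fact $\Pi_p=(0,\dots,0,\pm\tilde\pi^p)$, because $\hdte{k}(\Omega^{-p})=0$ for $0<k<p$.
\end{itemize}
This example concerns a single period rather than a full $\bA$-basis. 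It still shows that independence of the tractable coordinates does not give the separability you need, so some $p$-independence input is genuinely required. The paper's proof only invokes Theorem~\ref{Thm:transcendence-degrees} at this point and does not supply it either, so you cannot borrow it from there.
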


\begin{proof}
    After choosing a basis of $\dmot_E$ such that the conditions of \cite[Prop.~3.28]{cn-mp:hpqpam} are satisfied, let $\Psi$ be the corresponding rigid analytic trivialization of $\dmot_E$. By Remark~\ref{rem:rats} and \eqref{E:ratcomotive}, $\Psi = \mathfrak{C}\cdot(\Upsilon^{\tr})^{(1)}$ for some $\mathfrak{C}\in \GL_r(L[t])$.  
    Let $\lambda$ be a non-zero period of $E$ with $m \leq d$ of the coordinates of $\lambda$ tractable. Note that we have $m \leq r$ (see \cite[Rem.~3.30]{cn-mp:hpqpam}).  Then, \cite[Thm.~2.5.32]{uh-akj:pthshcff} (see also \cite[Prop.~3.18(b)]{cn-mp:hpqpam}) imply that by \cite[Prop.~3.28]{cn-mp:hpqpam}, there is a non-zero element $\boldsymbol{g}=(g_1, \dots, g_r)$ of the $\bA$-linear span of the rows of $\Psi^{-1}$ such that the tractable coordinates of $\lambda$ are given by $\{g_1\!\mid_{t=\theta}, \dots, g_m\!\mid_{t=\theta}\}$ and 
    \[
 \Span_{L}(\lambda) \subseteq \Span_L \Biggl( \bigcup_{j=0}^n  \Bigl\{ \hde{j}_t(g_1)\mid_{t=\theta}, \dots, \hde{j}_t(g_m)\mid_{t=\theta}  \Bigr\} \Biggr)
    \]
    for big enough $n\geq 0$ chosen so that $(\rd\phi(t)-\theta \Id_d)^n=0$. Thus, if the tractable coordinates of periods $\lambda_1, \dots, \lambda_k$ are algebraically independent over $L$, then by Theorem~\ref{Thm:transcendence-degrees}, we conclude that all coordinates of $\lambda_1, \dots, \lambda_k$ are algebraically independent over $L$.  
\end{proof}

\begin{Example}\label{E:Drinfeld}
 \emph{Drinfeld modules.} Let $(E_\varphi, \varphi)$  be a Drinfeld module of rank $r\geq 1$ defined over a finite separable extension $L$ of $K$  from Example~\ref{E:Drinfeld1CarlitzTensor1}(i). 
By \cite[Thm.~0.2]{rp:mtcdm} and \cite[Thm.~3.5.4]{cc-mp:aipldm}, the $\fp$-adic Galois representation associated to a Drinfeld module has open image in the motivic Galois group $\Gamma_{\mot_{E_{\varphi}}}$, hence Theorem \ref{T:galois-group-of-prolongation} recovers \cite[Thm.~1.1.3]{cn:arhpldm} which determines all algebraic relations among all hyperderivatives of all periods and quasi-periods of $E_{\varphi}$.  

To explain the results more explicitly, let $\{\lambda_1, \dots, \lambda_r\}$ be an $\bA$-basis of the period lattice $\Lambda_{E_\varphi}$ and for $1 \leq j \leq r-1$, let $F_{j}(\lambda_i)$ be the quasi-periods of $E_{\varphi}$ (see Subsection~\ref{Sub:Promodulesmotives}). Then, by Theorem~\ref{T:hyperRATnPer}, one can construct a rigid analytic trivialization $\Upsilon_{E_\varphi}$ such that  
\[
 \Span_{\oK}\left( \bigcup_{\ell=0}^n\bigcup_{i=1}^r \bigcup_{j=1}^{r-1} \{\hde{n}_\theta(\lambda_i), \hde{n}_\theta(F_{j}(\lambda_i))\}\right)= \Span_{\oK}\left(\prol{n}(\Upsilon_E^{(1)}) \mid_{t=\theta} \right)
\]
 Let $\End(\varphi)\subseteq \CI$ denote the endomorphism ring  of $E_{\varphi}$, which can be identified with
 \begin{equation}\label{E:endDM}
\{c \in \CI \ | \ c\Lambda_{E_\varphi} \subseteq \Lambda_{E_\varphi}\},      
 \end{equation}
  and let $\bK_{\varphi}$ denote the field of fractions of $\End(\varphi)$. 
Let $s:=[\bK_{\varphi}:K]$, the degree of the extension. Then, by \cite[Theorem~1.2.2]{cc-mp:aipldm}, Theorem~\ref{T:galois-group-of-prolongation}, and Theorem~\ref{Thm:transcendence-degrees}, we obtain \linebreak $\trdeg_{\oK}\oK\left(\bigcup_{\ell=0}^n\bigcup_{i=1}^r \bigcup_{j=1}^{r-1} \{\hde{n}_\theta(\lambda_i), \hde{n}_\theta(F_{j}(\lambda_i))\}\right) = (n+1)r^2/s$.
\end{Example}
\begin{Example}
\emph{Tensor powers of Carlitz module.} Recall from Example~\ref{E:Drinfeld1CarlitzTensor1}(ii) that the $t$-motive $\mot_{G_k}$ associated to the $k$-th tensor power $(G_k,C^{\otimes k})$ of the Carlitz module $(E_C,C)$ is generated over $K[t]$ by $e_1=(1,0, \dots, 0)$ such that
\begin{equation}\label{E:Cntaut1}
(t-\theta)^k e_1 = \tau e_1. 
\end{equation}
Thus, $\Theta_k:=\Theta_{G_k} = (t-\theta)^k$. Consider the Anderson-Thakur power series 
\[
\Omega := \big(\sqrt[q-1]{-\theta}\big)^{-q} \prod_{i=1}^\infty \bigg(1-\dfrac{t}{\theta^{q^i}}\bigg) \in \CC_\infty \langle t \rangle,
\]
where $\sqrt[q-1]{-\theta}$ is a fixed $(q-1)$-st root of $-\theta$. 
Note that $\Upsilon_k:=(\Omega^k)^{(-1)}$ is a rigid analytic trivialization for $M_{C^{\otimes k}}$ since $\Omega^{(-1)} = (t-\theta)\Omega$.

The Galois representation associated to $G_k$ is 
\[\varrho_{G_k,\fp}=(\varrho_{E_C,\fp})^k:\Gal(K^\sep/K)\to \bA_\fp^\times, \gamma\mapsto a_\gamma^k,
\]
where
$\varrho_{E_C,\fp}$ is the Galois representation associated to the Carlitz module $E_C$. It is well known 
that 
$\varrho_{C,\fp}$ is surjective (see \cite[Proposition 12.7]{mr:ntff}). If we pick $k$ such that $p\nmid k$, the subgroup of $1$-units, $1+\fp \bA_\fp\subset \bA_\fp^\times$ is $k$-divisible, i.e., $x^k=a$ is solvable for every $a\in 1+\fp \bA_\fp$.
Therefore, if $p\nmid k$, the subgroup of $1$-units is contained in the image of $(\varrho_{C,\fp})^n$. In particular, the image is $\fp$-adically open in $\bA_\fp^\times$. Using Theorem~\ref{T:hyperRATnPer} and Theorem~\ref{Thm:transcendence-degrees}, we therefore recover \cite[Theorem 8.1]{am:ptmaip} that the coordinates of a fundamental period of $(G_k,C^{\otimes k})$ are algebraically independent if $p$ does not divide $k$. In particular, by Theorem~\ref{T:hyperRATnPer} for any $n\geq 0$ the set $\{\widetilde{\pi}^k, \hde{1}_{\theta}(\widetilde{\pi}^k), \dots, \hde{n}_{\theta}(\widetilde{\pi}^k)\}$, where $\widetilde{\pi}$ is the fundamental period of the Carlitz module $(E_C,C)$,  is algebraically independent over $\oK$ if $p$ does not divide $k$. For the case of the Carlitz module which is the case $k=1$, the fundamental period $\widetilde{\pi}$ is hypertranscendental i.e., for any $n\geq 0$ the set $\{\widetilde{\pi}, \hde{1}_{\theta}(\widetilde{\pi}), \dots, \hde{n}_{\theta}(\widetilde{\pi})\}$ is algebraically independent over $\oK$, recovering \cite[Theorem 2.1]{am:aicph}.
\end{Example}

\section{Several Drinfeld modules}\label{S:non-isogenous-Drinfeld-modules}
Our method and result are quite general and powerful.
In Subsection~\ref{SubS:nDMs}, we illustrate this by showing that under certain hypothesis, the periods and quasi-periods, and their hyperderivatives of one Drinfeld module are algebraically independent from those of other Drinfeld modules, except for some imminent relations. In Subsection~\ref{SubS:examples}, we will provide explicit families of such Drinfeld modules to illustrate how our result can be applied. As before, let $L$ be a finite separable extension of $K=\Fq(\theta)$.

\subsection{Algebraic relations among periods}\label{SubS:nDMs}

 Consider $m\geq 2$ Drinfeld modules $(E_1,\varphi_1), \ldots, (E_m,\varphi_m)$ over $L$, and let $r_1,\ldots, r_m$ be their respective ranks.
Further, for each $i=1,\ldots, m$, let $\mathbf{K}_i\subseteq \CI$ denote the field of fractions of the endomorphism ring of $\varphi_i$ as given in \eqref{E:endDM}, and let $s_i:=[\mathbf{K}_i:K]$, the degree of the extension.

In order to simplify the computations, we assume that all these Drinfeld modules are normalized so that for each $i=1,\ldots, m$, the top coefficient of $\varphi_i(t)$ (i.e.,~the coefficient of $\tau^{r_i}$ in $\varphi_i(t)$) is $(-1)^{r_i-1}$. This can be obtained by replacing $L$ by a suitable finite separable extension and by replacing $\varphi_i$ by an isomorphic one over this finite extension. 
Due to this normalization, the top exterior power $\wedge^{r_i} \mot_{E_i}$ of the corresponding $t$-motive $\mot_{E_i}$ with respect to $L[t]$ is such that $\wedge_{L[t]}^{r_i} \mot_{E_i} \cong \mot_{E_C}$, where $\mot_{E_C}$ is the $t$-motive associated to the Carlitz module $(E_C,C)$. 
Replacing Drinfeld modules by isomorphic ones does not change the dimensions of Galois groups and algebraic relations. Moreover,  the numbers $r_i$ and $s_i$ defined above remain unchanged.

\medskip

For $0\neq \mathfrak{a} \in \bA$, recall the finite kernel 
$\varphi_i[\mathfrak{a}]:= \ker(\varphi_{i}(\mathfrak{a})) = \{ \,\boldsymbol{x} \in \CC_\infty \, \mid \, \varphi_i(\mathfrak{a})\boldsymbol{x}=0\,\}$ which is isomorphic to $(\bA/(\mathfrak{a}))^{\oplus r_i}$ (see \cite[Thm.~7.2.1]{dt:ffa}).
For $i=1,\ldots, m$ and a non-zero prime $\fp\in \bA$, we let
\[   L_{i,\fp} := L(\varphi_i[\fp^\infty]):= \bigcup_{k\geq 1} L(\varphi_i[\fp^k]) \subseteq L^{\sep}\]
be the extension of $L$ obtained by adjoining all $\fp$-power torsions of $\varphi_i$, and let 
\[  \check{L}_{i,\fp}:= \bigvee_{j\ne i} L_{j,\fp} \subseteq L^{\sep}\]
denote the compositum of the fields $L_{j,\fp}$ inside $L^{\sep}$. We further let
\[ L_{C,\fp}:= L(C[\fp^\infty]):=\bigcup_{k\geq 1} L(C[\fp^k]) \subseteq L^{\sep}\]
be the extension of $L$ obtained by adjoining all  $\fp$-power torsions of the Carlitz module. We emphasize that due to our normalization, each field $L_{i,\fp}$ for all $i=1, \dots, m$, contains the field $L_{C,\fp}$.

In what follows, we will assume the following hypothesis.

\begin{Hypothesis}\label{hypo:severalDMs}
For Drinfeld modules $(E_1,\varphi_1), \ldots, (E_m,\varphi_m)$ over $L$, we will assume that they are normalized as given above. We assume, in addition, that there exists a non-zero prime $\fp \in \bA$ such that the extensions $L_{1,\fp},\ldots, L_{m,\fp}$ are linearly disjoint field extensions of $L_{C,\fp}$, i.e., inside $L^\sep$ we have $L_{i,\fp}\cap \check{L}_{i,\fp}=L_{C,\fp}$ for all $i=1,\ldots, m$.
\end{Hypothesis}

\begin{Remark}
If two of the Drinfeld modules $(E_i,\varphi_i)$ and $(E_j,\varphi_j)$ are isogenous over some extension of $L$, then Hypothesis~\ref{hypo:severalDMs} is not fulfilled. It is not clear to the authors if this is the only obstruction. 
\end{Remark}

\begin{Theorem}\label{T:severalDMs}
Let $(E_1,\varphi_1), \ldots, (E_m,\varphi_m)$ be Drinfeld modules over $L$ satisfying Hypothesis \ref{hypo:severalDMs}.
Furthermore, let $(E, \psi)=\bigoplus_{i=1}^m (E_i,\varphi_i)$ be the $t$-module of dimension $m$ defined over $L$ given by the direct sum of these Drinfeld modules, i.e., where $\psi(t)$ is given by
\[
\psi(t) = \begin{pmatrix} \varphi_1(t) & & \\ & \ddots &  \\ & & \varphi_m(t) \end{pmatrix}.
\]
Then, we have the following:
\begin{enumerate}
    \item \label{item:fibre-product} The motivic Galois group $\Gamma_{\mot_E}$ of $E$ is the fibre product of the motivic Galois groups $\Gamma_{\mot_{E_i}}$ over the motivic Galois group $\Gamma_{\mot_{C}}$, and the image of the $\fp$-adic Galois representation of $E$ is $\fp$-adically open in $\Gamma_{\mot_E}$.
    \item \label{item:dimension-formula} For all $n\geq 0$, we have $\Gamma_{\rho_{n}\mot_E}=\rP{n}(\Gamma_{\mot_E})$ and 
    \begin{equation}\label{E:dimproDMeg}
\dim(\Gamma_{\rho_{n}\mot_E}) = (n+1)\cdot \left( 1 + \sum_{i=1}^m \Bigl(\frac{r_i^2}{s_i} -1 \Bigr) \right) 
    \end{equation} 
    \item \label{item:fibre-product-prolong} For all $m\geq 0$, the motivic Galois group $\Gamma_{\rho_{m}\mot_E}$  is the fibre product of the motivic Galois groups $\Gamma_{\rho_{m}\mot_{E_i}}$ over the motivic Galois group $\Gamma_{\rho_{m}\mot_{C}}$.    
\end{enumerate}
\end{Theorem}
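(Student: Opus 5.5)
The plan is to prove (1) first, since (2) and (3) then follow formally from the results of Section~\ref{S:Galois groups of prolongations}.

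\textbf{Upper bound in (1).} The $t$-motive $\mot_E=\bigoplus_i \mot_{E_i}$ has rigid analytic trivialization $\Upsilon=\operatorname{diag}(\Upsilon_{E_1},\ldots,\Upsilon_{E_m})$. Since $\mot_E$ generates a Tannakian subcategory containing each $\mot_{E_i}$, we get $\Gamma_{\mot_E}\subseteq\prod_i\Gamma_{\mot_{E_i}}$. By the normalization, $\wedge^{r_i}\mot_{E_i}\cong\mot_{E_C}$, so each projection $\Gamma_{\mot_E}\to\Gamma_{\mot_{E_i}}\to\Gamma_{\mot_C}=\GG_m$ is the determinant map, and these all coincide on $\Gamma_{\mot_E}$. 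Hence $\Gamma_{\mot_E}$ lies in the fibre product
\[
F:=\Gamma_{\mot_{E_1}}\times_{\Gamma_{\mot_C}}\cdots\times_{\Gamma_{\mot_C}}\Gamma_{\mot_{E_m}}.
\]
By Chang--Papanikolas, each $\Gamma_{\mot_{E_i}}\cong\Cent_{\GL_{r_i}}(\mathbf{K}_i)$ is connected of dimension $r_i^2/s_i$ and surjects onto $\GG_m$ via the determinant. Its kernel has dimension $r_i^2/s_i-1$, so $\dim F=1+\sum_i(r_i^2/s_i-1)$, and $F$ should be connected because the kernels are connected.

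\textbf{Lower bound via Galois images.} By Theorem~\ref{thm:Galois-representation} and Pink's openness theorem (as used in Example~\ref{E:Drinfeld}), each $\image(\varrho_{E_i,\fp})$ is open in $\Gamma_{\mot_{E_i}}(\bK_\fp)$. The image of $\varrho_{E,\fp}$ is the image of $\Gal(L^\sep/L)$ acting on $\prod_i T_\fp(E_i)$, which factors through $\Gal(L'/L)$ for the compositum $L'=L_{1,\fp}\cdots L_{m,\fp}$. Hypothesis~\ref{hypo:severalDMs} gives, by Galois theory for linearly disjoint extensions over $L_{C,\fp}$, that
\[
\Gal(L'/L_{C,\fp})\cong\prod_i\Gal(L_{i,\fp}/L_{C,\fp}).
\]
Hence $\image(\varrho_{E,\fp})$ is the fibre product of the groups $\image(\varrho_{E_i,\fp})$ over $\image(\varrho_{C,\fp})$. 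This fibre product contains $\prod_i(\image(\varrho_{E_i,\fp})\cap\ker\det)$, which is open in $\prod_i\ker(\det)(\bK_\fp)$. Together with the image surjecting onto an open subgroup of $\bK_\fp^\times$, this shows $\image(\varrho_{E,\fp})$ is open in $F(\bK_\fp)$. Since $F$ is smooth and connected, an open subgroup of $F(\bK_\fp)$ is Zariski dense; Lemma~\ref{lem:Zariski-dense} essentially gives this. As the image lies in $\Gamma_{\mot_E}(\bK_\fp)$, we get $\Gamma_{\mot_E}=F$, which proves (1).

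The hardest step is the openness of the fibre product. One must verify that openness of each $\image(\varrho_{E_i,\fp})$ in $\Gamma_{\mot_{E_i}}$ transfers to openness of the intersection with the determinant-one part, and that linear disjointness gives exactly the fibre product structure on images rather than just an inclusion. The plan is to compare the $\fp$-adic analytic dimensions of the groups involved.

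\textbf{Parts (2) and (3).} For (2), apply Theorem~\ref{T:galois-group-of-prolongation}, whose hypothesis holds because $\Gamma_{\mot_E}$ is connected and the image is open. This gives $\Gamma_{\rho_n\mot_E}=\rP{n}(\Gamma_{\mot_E})$, and the dimension formula follows from Proposition~\ref{prop:ses-and-dim-of-prolongations}. For (3), apply the same theorem to each $E_i$ and to $C$ (openness holds for Drinfeld modules). Lemma~\ref{lem:prolongations-preserve-fibre-products} then gives
\[
\rP{n}(F)=\rP{n}(\Gamma_{\mot_{E_1}})\times_{\rP{n}(\Gamma_{\mot_C})}\cdots=\Gamma_{\rho_n\mot_{E_1}}\times_{\Gamma_{\rho_n\mot_C}}\cdots,
\]
which combined with (2) is (3).
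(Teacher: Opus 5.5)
Your proposal is correct and follows the same route as the paper. It embeds $\Gamma_{\mot_E}$ into the fibre product $F$ over $\Gamma_{\mot_C}$ via determinants, and proves openness of the $\fp$-adic image from Pink's theorem and the decomposition $\Gal(\bigvee_i L_{i,\fp}/L_{C,\fp})\cong\prod_i\Gal(L_{i,\fp}/L_{C,\fp})$; it then concludes by connectedness and gets (2) and (3) from Theorem~\ref{T:galois-group-of-prolongation} and Lemma~\ref{lem:prolongations-preserve-fibre-products}. The step you flag as hardest needs no comparison of $\fp$-adic analytic dimensions, which would be delicate in characteristic $p$: since $\det\circ\varrho_{E_i,\fp}=\varrho_{C,\fp}$, your fibre-product description of the image says exactly $\image(\varrho_{E,\fp})=F(\bK_\fp)\cap\prod_i\image(\varrho_{E_i,\fp})$, and this is open in $F(\bK_\fp)$ because each $\image(\varrho_{E_i,\fp})$ is open in $\Gamma_{\mot_{E_i}}(\bK_\fp)$.
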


\begin{proof}
We start by showing \eqref{item:fibre-product}.
Since $E$ is the direct sum of all $E_i$, its $t$-motive is $\mot_E = \bigoplus_{i=1}^m\mot_{E_i}$, the direct sum of the $t$-motives $\mot_{E_i}$. 
Therefore, we have an embedding of motivic Galois groups
\begin{equation}\label{eq:embedding-in-direct-product}
\Gamma_{\mot_E}\subseteq \prod_{i=1}^m \Gamma_{\mot_{E_i}}.\end{equation}
Since, for all $i=1, \dots, m$, we have 
$\wedge^{r_i} \mot_{E_i} = \mot_C$, we see that $\mot_C$ is in the Tannakian category generated by $\mot_{E_i}$ and so, we have surjective homomorphisms $\Gamma_{\mot_{E_i}} \twoheadrightarrow\Gamma_{\mot_C}$, induced by restricting to the Tannakian subcategory generated by $\mot_C$.
Thus, the embedding \eqref{eq:embedding-in-direct-product} factors through the fibre product of all $\Gamma_{\mot_{E_i}}$ over $\Gamma_{\mot_{C}}$, which we will denote by $\Gamma'$.
In the standard embeddings of $\Gamma_{\mot_{E_i}}\subseteq \GL_{r_i}$ and $\Gamma_{\mot_C}\cong \Gm$, the homomorphisms $\Gamma_{\mot_{E_i}} \twoheadrightarrow \Gamma_{\mot_C}$ are given by taking determinants, and hence, we have
\[ \Gamma_{\mot_E} \subseteq \Gamma':= \left\{ \begin{pmatrix}
    D_1 & & & \\ & D_2 & & \\ & & \ddots & \\ & & & D_m
\end{pmatrix} \,\,\middle|\,\, D_i\in \Gamma_{\mot_{E_i}}\subseteq \GL_{r_i}, \det(D_1)=\cdots = \det(D_m)  \right\}.  \]
We will show that the image of the Galois representation $\varrho_{E,\fp}$ is $\fp$-adically open in the group $\Gamma'(\bK_\fp)$. Then, since $\Gamma'$ is connected, this shows that the inclusion $\Gamma_{\mot_E} \subseteq \Gamma'$ is, in fact, an equality. 
This proves \eqref{item:fibre-product}.

\medskip

Since the Galois extensions $L_{1,\fp},\ldots, L_{m,\fp}$ are linearly disjoint field extensions of $L_{C,\fp}$, we have 
\[  \Gal\Big(\bigvee_{i} L_{i,\fp}/ L_{C,\fp}\Big) \cong \Gal(L_{1,\fp}/L_{C,\fp})\times \dots \times \Gal(L_{m,\fp}/L_{C,\fp}), \]
where $\bigvee_{i} L_{i,\fp}$ denotes the compositum of the fields $L_{i, \fp}$, $i=1, \dots, m$ inside $L^{\sep}$.
Recall that for all $i=1, \dots, n$, the images of the Galois representations 
\[ \varrho_{E_i,\fp}(\Gal(L^\sep/L))=\varrho_{E_i,\fp}(\Gal(L_{i,\fp}/L))\subseteq \Gamma_{\mot_{E_i}}(\bK_\fp)\]
are $\fp$-adically open \cite[Thm.~0.2]{rp:mtcdm}, and note that $\det(\varrho_{E_i,\fp}(\gamma))=1$ if and only if $\gamma\in \Gal(L^\sep/L)$ fixes $L_{C,\fp}$. 
Thus, we see that the image of 
\begin{align*}
    &\Gal\Big(\bigvee_{i} L_{i,\fp}/ L_{C,\fp}\Big) \cong \Gal(L_{1,\fp}/L_{C,\fp})\times \dots \times \Gal(L_{m,\fp}/L_{C,\fp})
    \hookrightarrow \ker\Bigl(\Gamma'(\bK_{\fp})\to \Gamma_{\mot_C}(\bK_{\fp})\Bigr)
\end{align*} 
is also $\fp$-adically open. Finally, the image of $\Gal(L_{C,\fp}/L)$ in $\Gamma_{\mot_C}(\bK_{\fp})$ is  isomorphic to $\Gm(\bA_\fp)$, and hence the  Galois representation $\varrho_{E,\fp}$ has a $\fp$-adically open image in $\Gamma'$.

\medskip

Next, we prove part \eqref{item:dimension-formula}. By part \eqref{item:fibre-product} and Theorem \ref{T:galois-group-of-prolongation}, we have $\Gamma_{\rho_{n}\mot_E}=\rP{n}(\Gamma_{\mot_E})$, and
$\dim(\Gamma_{\rho_{n}\mot_E})=(n+1)\cdot \dim(\Gamma_{\mot_E})$. So it only remains to show the formula in \eqref{E:dimproDMeg} for $n=0$.
Since $\dim(\Gamma_{\mot_C})=1$, $\dim(\Gamma_{\mot_{E_i}})=\frac{r_i^2}{s_i}$ for all $i=1,\ldots, m$, and  by part \eqref{item:fibre-product}, $\Gamma_{\mot_E}$ is the fibre product of all $\Gamma_{\mot_{E_i}}$ over $\Gamma_{\mot_{C}}$, the dimension formula for fibre products gives
\[ \dim(\Gamma_{\mot_E})=\sum_{i=1}^m \dim(\Gamma_{\mot_{E_i}}) - (m-1)\cdot \dim(\Gamma_{\mot_C}) =  1 + \sum_{i=1}^m \Bigl(\frac{r_i^2}{s_i} -1 \Bigr) , \]
which is the claim for $n=0$.

For part \eqref{item:fibre-product-prolong}, we first recall that by Theorem~\ref{T:galois-group-of-prolongation}, we have  $\Gamma_{\rho_{n}\mot_C}=\rP{n}(\Gamma_{\mot_C})$ and $\Gamma_{\rho_{n}\mot_{E_i}}=\rP{n}(\Gamma_{\mot_{E_i}})$ for all $i=1,\ldots, m$.
Then by Lemma \ref{lem:prolongations-preserve-fibre-products}, we get
\begin{align*}
    \Gamma_{\rho_{n}\mot_E} &\cong 
    \rP{n}(\Gamma_{\mot_E}) \cong \rP{n}(\Gamma_{\mot_{E_1}}\times_{\Gamma_{\mot_C}}\cdots \times_{\Gamma_{\mot_C}} \Gamma_{\mot_{E_m}} ) \\
    & \cong \rP{n}(\Gamma_{\mot_{E_1}})\times_{\rP{n}(\Gamma_{\mot_C})} \cdots \times_{\rP{n}(\Gamma_{\mot_C})} \rP{n}(\Gamma_{\mot_{E_m}})\\ & \cong \Gamma_{\rho_{n}\mot_{E_1}}\times_{\Gamma_{\rho_{n}\mot_C}}\cdots \times_{\Gamma_{\rho_{n}\mot_C}} \Gamma_{\rho_{n}\mot_{E_m}} 
\end{align*}  
\end{proof}

The following corollary follows from  Theorems~\ref{T:Pap} and 
\ref{T:severalDMs}. 
\begin{Corollary}
  Let $(E_1,\varphi_1), \ldots, (E_n,\varphi_n)$ be Drinfeld modules over $L$ satisfying Hypothesis \ref{hypo:severalDMs}. 
  Then, 
the only algebraic relations between the entries of all 
$\Upsilon_{E_i}^{(1)} |_{t=\theta}$ ($i=1,\ldots, n$), and their hyperderivatives, are the algebraic relations among the entries of each 
$\Upsilon_{E_i}^{(1)} |_{t=\theta}$,  the relation $\det(\Upsilon_{E_1}^{(1)} |_{t=\theta})=\det(\Upsilon_{E_2}^{(1)} |_{t=\theta})=\dots = \det(\Upsilon_{E_m}^{(1)} |_{t=\theta})$, and the relations derived from these by hyperdifferentiation. That is, for any $n \geq 0$
\begin{align*}
\trdeg_{\oK}\oK(\Upsilon_{\rho_{n} E}^{(1)}|_{t=\theta}) &= (n+1)\cdot \trdeg_{\oK}\oK(\Upsilon_{E}^{(1)} |_{t=\theta})\\
&= (n+1)\cdot  \left( 1 + \sum_{i=1}^m \Bigl(\frac{r_i^2}{s_i} -1 \Bigr) \right)
\end{align*}
\end{Corollary}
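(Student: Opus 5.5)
The plan is to deduce the corollary directly from Theorem~\ref{T:severalDMs} together with Papanikolas' Theorem~\ref{T:Pap}, applied to the direct sum $(E,\psi)=\bigoplus_i(E_i,\varphi_i)$ and to its prolongations $\rho_nE$.

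First, I fix rigid analytic trivializations $\Upsilon_{E_i}$ of the $\mot_{E_i}$ and take the block diagonal matrix $\Upsilon_E=\operatorname{diag}(\Upsilon_{E_1},\ldots,\Upsilon_{E_m})$. This is a rigid analytic trivialization of $\mot_E=\bigoplus_i\mot_{E_i}$ with respect to the concatenated basis. By Theorem~\ref{T:RATpro}, $\prol{n}(\Upsilon_E)$ trivializes $\rho_n\mot_E\cong\mot_{\rho_nE}$. Since $\prol{n}$ acts entrywise through the $\hdte{k}$, the entries of $\prol{n}(\Upsilon_E^{(1)})|_{t=\theta}$ are exactly the specializations at $t=\theta$ of the $\hdte{k}$-derivatives ($k\le n$) of the entries of the $\Upsilon_{E_i}^{(1)}$. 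Via Theorem~\ref{T:hyperRATnPer}, these span the same $L$-space as the periods and quasi-periods and their $\theta$-hyperderivatives up to order $n$. This is what "hyperderivatives" means in the statement.

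Next, I apply Theorem~\ref{T:Pap} to the abelian, rigid analytically trivial $t$-motive $\rho_n\mot_E$ over the algebraic extension $L$. This gives
\[
\trdeg_L L(\prol{n}(\Upsilon_E^{(1)})|_{t=\theta})=\dim\Gamma_{\rho_n\mot_E}.
\]
Theorem~\ref{T:severalDMs}\eqref{item:dimension-formula} evaluates the right-hand side as
\[
(n+1)\Bigl(1+\sum_i\bigl(\tfrac{r_i^2}{s_i}-1\bigr)\Bigr),
\]
and the case $n=0$ gives the first equality in the displayed formula. Replacing $L$ by $\oK$ does not change transcendence degrees, since $\oK/L$ is algebraic.

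For the qualitative statement about which relations occur, I would argue by counting dimensions. Let $V$ be the variety cut out by all the listed relations: the relations within each $\Upsilon_{E_i}^{(1)}|_{t=\theta}$, the equalities of determinants, and the hyperderivatives of both. The determinant equalities hold because of the normalization: each $\det\Upsilon_{E_i}$ trivializes $\wedge^{r_i}\mot_{E_i}\cong\mot_C$, so all of them agree up to a factor in $\bK^\times$, which the choice of $\Upsilon_{E_i}$ can absorb. The key point is that $V$ is the torsor attached to the fibre product of the $\rP{n}(\Gamma_{\mot_{E_i}})$ over $\rP{n}(\Gamma_{\mot_C})$. By Theorem~\ref{T:severalDMs}\eqref{item:fibre-product-prolong}, this fibre product equals $\Gamma_{\rho_n\mot_E}$. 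By Papanikolas' theory, the full ideal of relations is the defining ideal of the $\Gamma_{\rho_n\mot_E}$-torsor containing the specialized matrix. The listed relations lie in that ideal and already define a variety of the correct dimension, so they generate the relations up to radical and components.

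I expect this last identification to be the main obstacle: showing that the torsor of $\rho_n\mot_E$ is cut out precisely by the prolonged relations of the individual torsors together with the determinant relation. I would first try to prove it by applying Lemma~\ref{lem:prolongations-preserve-fibre-products} to the torsors themselves, since $\rP{n}$ also preserves fibre products of the torsor schemes, and then specialize at $t=\theta$ using the torsor form of Theorem~\ref{T:Pap}.
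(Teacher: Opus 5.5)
Your proposal is correct and follows the paper's argument. The paper's proof consists only of applying Theorem~\ref{T:Pap} to the prolongation $\rho_nE$ of the direct sum and combining it with Theorem~\ref{T:severalDMs}\eqref{item:dimension-formula}. It treats the ``only relations'' sentence as a rephrasing of the resulting transcendence-degree identity, with the fibre-product shape supplied by Theorem~\ref{T:severalDMs}\eqref{item:fibre-product-prolong}.

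Your torsor argument goes beyond what the paper provides. If you pursue it, note that over $\oK(t)$ it is enough to observe that $Z_{\prol{n}(\Upsilon_E)}\subseteq \rP{n}(Z_{\Upsilon_E})$ are torsors under the same group $\Gamma_{\rho_n\mot_E}=\rP{n}(\Gamma_{\mot_E})$. Passing to $t=\theta$ then needs the Anderson--Brownawell--Papanikolas lifting criterion, not only the transcendence-degree statement of Theorem~\ref{T:Pap}.
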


 Combining Theorem~\ref{T:hyperRATnPer}, Example~\ref{E:Drinfeld}, and this corollary, we obtain the following. 

\begin{Corollary}\label{C:trdegSeveralDM} 
    Let $(E_1,\varphi_1), \ldots, (E_n,\varphi_n)$ be Drinfeld modules over $L$ satisfying Hypothesis \ref{hypo:severalDMs}.  
    Then, the only algebraic relations among all the periods, quasi-periods, and their hyperderivatives are the algebraic relations among the periods and quasi-periods of each $E_i$, $i=1,\dots, m$, Anderson's Legendre relation, and the relations derived thereof by hyperdifferentiating.
    
 In particular, the field extension $\mathcal{F}_n$ generated over $L$ by all the periods, quasi-periods and their hyperderivatives up to order $n$, satisfies 
    \[ \trdeg(\mathcal{F}_n:L) = (n+1)\cdot \left( 1 + \sum_{i=1}^m \Bigl(\frac{r_i^2}{s_i} -1 \Bigr) \right). \]
\end{Corollary}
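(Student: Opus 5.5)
The plan is to reduce Corollary~\ref{C:trdegSeveralDM} to the preceding corollary about entries of rigid analytic trivializations. The bridge is Theorem~\ref{T:hyperRATnPer}, applied to each $E_i$ and to the direct sum $(E,\psi)$.

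First, fix for each $i$ a rigid analytic trivialization $\Upsilon_{E_i}$ as in Example~\ref{E:Drinfeld}. For these, the $L$-span (after extending to $\oK$, which does not change transcendence degrees since $L\subseteq\oK$ is algebraic) of the entries of $\prol{n}(\Upsilon_{E_i}^{(1)})|_{t=\theta}$ equals the span of all $\hde{\ell}_\theta(\lambda)$ and $\hde{\ell}_\theta(F_j(\lambda))$ for $\ell\leq n$. Since a Drinfeld module has $d=1$ and $\rd\varphi_i(t)=\theta$, all coordinates are tractable, so no extra $n$ is needed.

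Second, for $E=\bigoplus_i E_i$ the matrix $\Upsilon_E=\operatorname{diag}(\Upsilon_{E_1},\dots,\Upsilon_{E_m})$ is a rigid analytic trivialization, and $\prol{n}(\Upsilon_E)$ is, up to a permutation of rows and columns, the block diagonal matrix of the $\prol{n}(\Upsilon_{E_i})$. Hence the field $\mathcal{F}_n$ and $L(\prol{n}(\Upsilon_E^{(1)})|_{t=\theta})$ are generated by mutually $L$-linear combinations of each other's generators. They therefore agree after base change to $\oK$, and in particular have the same transcendence degree. Combining Theorem~\ref{T:Pap} (for $\rho_nE$, which is uniformizable and abelian) with Theorem~\ref{T:severalDMs}\eqref{item:dimension-formula} gives
\[\trdeg(\mathcal{F}_n:L)=\dim\Gamma_{\rho_n\mot_E}=(n+1)\Bigl(1+\sum_{i=1}^m\bigl(\tfrac{r_i^2}{s_i}-1\bigr)\Bigr).\]

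Third, the statement that ``these are the only relations'' is a dimension comparison. Let $V$ be the subvariety cut out by the listed relations: the relations within each $E_i$, Legendre, and their hyperderivatives. The plan is to check that, via the span identification, these relations correspond exactly to the defining equations of the fibre product $\rP{n}(\Gamma_{\mot_{E_1}})\times_{\rP{n}(\Gamma_{\mot_C})}\cdots$ of Theorem~\ref{T:severalDMs}\eqref{item:fibre-product-prolong}. Anderson's Legendre relation expresses each $\det(\Upsilon_{E_i}^{(1)}|_{t=\theta})$ as an algebraic multiple of $\widetilde{\pi}$; this matches the determinant condition $\det D_1=\dots=\det D_m$, and its hyperderivatives match the prolonged conditions. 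Then $V$ has dimension equal to $\dim\Gamma_{\rho_n\mot_E}$ and contains the point, so the ideal of all relations agrees with that of $V$ up to radical, because $V$ is irreducible (prolongations of connected groups are connected by Lemma~\ref{lem:connected-component}).

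I expect the main obstacle to be this last identification: matching the Legendre relation precisely with the determinant equalities and verifying that no further relations arise from the linear change of generators. I would handle it by working with the entries of $\prol{n}(\Upsilon^{(1)})$ directly, where the preceding corollary already gives the clean statement, and then transporting it through the invertible $\oK$-linear substitution.
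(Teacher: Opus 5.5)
Your proposal is correct and follows the paper's own route: the paper obtains this corollary by combining the preceding corollary (which is Theorem~\ref{T:Pap} together with Theorem~\ref{T:severalDMs} applied to $\prol{n}(\Upsilon_E)$, as in your Step~2) with the span identifications of Theorem~\ref{T:hyperRATnPer} and Example~\ref{E:Drinfeld}, and this is exactly where your Step~3 ends up. One small slip: $\det\Upsilon_{E_i}^{(1)}$ is, up to factors in $\bK^\times$ and $L^\times$, the Carlitz trivialization $\Omega$, and $\Omega(\theta)=-1/\widetilde{\pi}$, so $\det(\Upsilon_{E_i}^{(1)}|_{t=\theta})$ is an algebraic multiple of $\widetilde{\pi}^{-1}$ rather than of $\widetilde{\pi}$, which does not affect the argument.
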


\subsection{Family of examples}\label{SubS:examples}

In this section, we illustrate Theorem \ref{T:severalDMs} by constructing a family of examples satisfying Hypothesis \ref{hypo:severalDMs}.
In this family of examples, we restrict ourselves to two Drinfeld modules. 

\begin{Theorem}
Let $(E_1, \varphi_1)$ be given by $\varphi_1(t)=\theta+a\tau-\tau^2$, and $(E_2, \varphi_2)$ be given by $\varphi_2(t)=\theta+b\tau-\tau^2$ over $K=\Fq(\theta)$ with $a,b\in K$ such that the following conditions hold:
\begin{itemize}
\item There is a prime $\ell\subseteq \Fq[\theta]$, $\ell\neq (\theta)$ such that $v_\ell(a)\geq 0$, $v_\ell(b)<0$, and $\operatorname{gcd}(q, v_\ell(b))=1$,
\item $\Gal(K(\varphi_2[t])/K)\supseteq \SL_2(\Fq)$,
\end{itemize}
Here $v_\ell$ denotes the $\ell$-adic valuation. For $i=1,2$, let  
$K_{i}:=\bigcup_{k\geq 1} K(\varphi_i[t^k])$, and let $ K_{C}:= \bigcup_{k\geq 1} K(C[t^k])$. 
Then, for the $t$-adic Galois representation $\varrho_{E_2,t}$, we have
\[ \varrho_{E_2,t}\bigl( \Gal(K_{2}/K_{C}) \bigr) = \SL_2(\Fq\ps{t}), \]
and $K_{1}$ and $K_{2}$ are linearly disjoint over $K_{C}$.

In particular, $(E_1,\varphi_1)$ and $(E_2, \varphi_2)$ satisfy Hypothesis \ref{hypo:severalDMs} for $\fp=(t)$.
\end{Theorem}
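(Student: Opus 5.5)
The proof has two parts: surjectivity of $\varrho_{E_2,t}$ onto $\SL_2(\Fq\ps{t})$ on $\Gal(K_2/K_C)$, and then linear disjointness via ramification at $\ell$.

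\emph{Surjectivity.} Because $\varphi_2$ is normalized, $\det\circ\varrho_{E_2,t}=\varrho_{C,t}$. Hence $\Gal(K_2/K_C)$ is exactly the preimage of $\SL_2$, and its image $H$ is a closed subgroup of $\SL_2(\Fq\ps{t})$. I would use the standard lifting criterion for closed subgroups of $\SL_2(\Fq\ps{t})$, of Serre/Pink type: if $H$ surjects onto $\SL_2(\Fq)$ modulo $t$ and $H\cap \SL_2(t)$ surjects onto $\mathfrak{sl}_2(\Fq)$ modulo $t^2$, then $H=\SL_2(\Fq\ps{t})$. This works because commutators then generate all higher layers $\SL_2(t^k)/\SL_2(t^{k+1})\cong\mathfrak{sl}_2(\Fq)$. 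If the bracket argument is problematic in small characteristic, e.g.\ $q=2,3$, I would instead fill the layers with $p$-th powers.

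The first hypothesis is exactly the mod-$t$ input. For the second layer I would use the prime $\ell$. Since $v_\ell(b)<0$ with $\gcd(q,v_\ell(b))=1$, $\varphi_2$ has bad reduction at $\ell$, and the Newton polygon has breaks at slopes prime to $p$. Tate uniformization gives
\[
\varphi_2 \sim \text{rank-}1 \text{ module } \psi \text{ with lattice of rank } 1,
\]
with $\psi$ of good reduction. The inertia group at $\ell$ then acts on $T_t(E_2)$ through a unipotent matrix $\begin{pmatrix}1&x\\0&1\end{pmatrix}$. Here $x$ comes from $t^k$-division points of the Tate period $\omega$, and $v_\ell(\omega)$ is controlled by $v_\ell(b)$, which is prime to $q$ and hence not a $t$-power obstruction. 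This forces $x$ to be a unit times an element generating $\Fq\ps{t}$ up to finite index. Combined with mod-$t$ surjectivity, conjugating these unipotents by $\SL_2(\Fq)$ should give all of $\mathfrak{sl}_2(\Fq)$ in the level-$t$ layer. In fact I would aim to show directly that inertia contains elements $\begin{pmatrix}1&t u\\0&1\end{pmatrix}$ together with elements $\equiv 1 \pmod t$ not in $\SL_2(t^2)$.

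\emph{Linear disjointness.} $\varphi_1$ has good reduction at $\ell$ because $v_\ell(a)\ge0$ and the top coefficient is a unit. The Carlitz module is also good at $\ell$, and $\ell\neq(\theta)$. So $K_1/K$ and $K_C/K$ are unramified at $\ell$. Let $M=K_1\cap K_2$, a Galois extension of $K_C$ inside $K_2$, so $\Gal(K_2/M)$ is a closed normal subgroup $N$ of $\SL_2(\Fq\ps{t})$. Since $M/K_C$ is unramified above $\ell$, $N$ contains the inertia subgroups at $\ell$ and all their conjugates. Those unipotents from the first part normally generate $\SL_2(\Fq\ps{t})$; for $q>3$ this uses perfectness of $\SL_2(\Fq)$ together with the layer argument. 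Hence $N$ is everything, $M=K_C$, and Hypothesis~\ref{hypo:severalDMs} holds for $\fp=(t)$.

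The main obstacle is the precise computation of the inertia image at $\ell$ via the Tate uniformization. One must show the unipotent entries are large enough (valuation exactly $0$ or $1$ in $t$) both to fill the first congruence layer and to normally generate. The condition $\gcd(q,v_\ell(b))=1$ is where this bites, and I would verify it by computing the valuation of the Tate period through the Newton polygon of $\varphi_2(t)$.
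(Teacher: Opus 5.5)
There is a genuine gap, mainly in the surjectivity step. Your criterion ``surjective mod $t$ and full first congruence layer $\Rightarrow$ all of $\SL_2(\Fq\ps{t})$'' is justified by $[\one_2+t^iA,\one_2+t^jB]\equiv\one_2+t^{i+j}[A,B]$. That justification breaks down for every even $q$. In characteristic $2$, $\mathfrak{sl}_2(\Fq)$ is spanned by $\begin{pmatrix}0&1\\0&0\end{pmatrix}$, $\begin{pmatrix}0&0\\1&0\end{pmatrix}$ and the central element $\one_2=\mathrm{diag}(1,-1)$, so $[\mathfrak{sl}_2(\Fq),\mathfrak{sl}_2(\Fq)]=\Fq\one_2$. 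Your $p$-th power fallback does not help either: in characteristic $2$, Cayley--Hamilton gives $g^2-\one_2=\tr(g)\,g$ for $g\in\SL_2$, so squares of congruence elements again have scalar leading term. Hence, for $q$ even, nothing in your plan fills the layers $\SL_2(t^n)/\SL_2(t^{n+1})$ for $n\geq 2$.

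The mod-$t$ input is also not ``exactly the hypothesis''. The hypothesis gives $\Gal(K(\varphi_2[t])/K(C[t]))=\SL_2(\Fq)$. To pass to $K_C$ you must exclude $K(\varphi_2[t])\cap K_C\supsetneq K(C[t])$. Perfectness does this for $q\geq 4$, but for $q=2,3$ the group $\SL_2(\Fq)$ has an abelian quotient of order $p$, which could come from the pro-$p$ extension $K_C/K(C[t])$. Finally, the Tate-curve heuristic does not transfer. Here the unipotent group is $(\Fq\ps{t},+)$, which is elementary abelian, so a single unipotent generates a group of order $p$, and ``$x$ generating $\Fq\ps{t}$ up to finite index'' has no content.

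The missing idea, which the paper takes from Zywina, is a full-strength inertia statement. After a totally ramified base change of degree $(q-1)/\gcd(q-1,v_\ell(b))$, the condition $p\nmid v_\ell(j(\varphi_2))=(q+1)v_\ell(b)$ shows that inertia at $\ell$ acts on each $\varphi_2[t^n]$ through the \emph{whole} group of matrices $\begin{pmatrix}1&\beta\\0&1\end{pmatrix}$, $\beta\in\Fq[t]/(t^n)$. So the inertia image contains $I=\bigl\{\begin{pmatrix}1&\beta\\0&1\end{pmatrix}:\beta\in\Fq\ps{t}\bigr\}$, and since $K_1/K$ is unramified at $\ell$, we get $I\subseteq\Gal(K_1K_2/K_1)$.

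With $I$ in hand, no lifting criterion is needed, in any characteristic. Mod $t$, $\Gal(K_1(\varphi_2[t])/K_1)$ is a normal subgroup of $\SL_2(\Fq)$ containing the transvections, hence all of $\SL_2(\Fq)$; this also settles $q=2,3$. For each $n$, the image in the abelian layer $\SL_2(t^n)/\SL_2(t^{n+1})\cong\mathfrak{sl}_2(\Fq)$ is stable under conjugation by $\SL_2(\Fq)$ and contains $\Fq\begin{pmatrix}0&1\\0&0\end{pmatrix}$. Conjugating by lower unipotents then shows it is everything.

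The paper runs this induction directly for $\Gal(K_1K_2/K_1)$. That gives $\Gal(K_1K_2/K_1)=\Gal(K_2/K_C)=\SL_2(\Fq\ps{t})$, i.e.\ surjectivity and linear disjointness at once. Your separate disjointness argument also works once $I$ is available: $I$ and its conjugate by $\begin{pmatrix}0&1\\-1&0\end{pmatrix}$ generate $\SL_2(\Fq\ps{t})$. However, perfectness of $\SL_2(\Fq)$ is irrelevant there. Also, unipotents of the form $\begin{pmatrix}1&tu\\0&1\end{pmatrix}$, which is what you propose to produce, would only normally generate a subgroup of the first congruence kernel.
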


\begin{proof}
The proof is done in several steps where we compute the Galois group of $K_2$ over $K_C$ and  the Galois group of the compositum $K_1\cdot K_2$ over $K_1$.\\

\paragraph{Claim 1:} \emph{With respect to an appropriate $\Fq\ps{t}$-basis of $\varphi_2[t^\infty]=\bigcup_{k\geq 0}\varphi_2[t^k]$, the inertia group $I_\ell$ of $K_2/K$ at $\ell$ contains 
the group
\[  I:= \left\{ \begin{pmatrix} 1 & \beta \\ 0 & 1 \end{pmatrix} \middle| \beta\in \Fq\ps{t} \right\} \]
of upper triangular matrices with coefficients in $\Fq\ps{t}$.}

\medskip

Let $m:=\operatorname{gcd}(q-1,v_\ell(b))$, and take $\tilde{K}$ to be an extension of $K$ totally ramified at $\ell$ of degree $\frac{q-1}{m}$, and let $\tilde{\ell}$ be the place in $\tilde{K}$ lying over $\ell$. 
Then the Drinfeld module $\varphi_2$ considered over the completion $\tilde{K}_{\tilde{\ell}}$ satisfies the hypotheses of \cite[Prop.~4.1]{dz:dmmgatp}, for $\mathfrak{a}=t^n$ (any $n\geq 0$).
Furthermore, the top exterior power of  $\varphi_2$ is isomorphic to the Carlitz module (i.e., $\wedge^{2} \mot_{E_2} \cong M_{E_C}$), 
and the $j$-invariant of $\varphi_2$ is $j(\varphi_2)=\frac{b^{q+1}}{-1}$, and satisfies $p \nmid v_\ell(j(\varphi_2))=(q+1)\cdot v_\ell(b)$ by assumption on $b$.

Hence, the proof of \cite[Prop.4.4]{dz:dmmgatp} shows that for any $n\geq 0$, the inertia group of the $t^n$-torsion of $\varphi_2$ over $\tilde{K}_{\tilde{\ell}}$ equals
\[   \left\{ \begin{pmatrix} 1 & \beta \\ 0 & 1 \end{pmatrix} \middle| \beta\in \Fq[t]/(t^n) \right\}. \] 
Therefore, the inertia group $I_\ell$ over the smaller field $K$ contains that group. The claim is obtained by going to the limit $n\to \infty$.\\

\paragraph{Claim 2:} \emph{The Galois group $\Gal(K_1\cdot K_2, K_1)$ contains the group $I$ from Claim 1.}

\medskip

Since $v_\ell(a)\geq 0$, the Drinfeld module $\varphi_1$ has good reduction at $\ell$. By \cite[Theorem 6.3.1]{mp:dm}, $K_1/K$ is unramified at $\ell$. Hence, the inertia group at $\ell$ (more precisely at a place above $\ell$) of $K_1\cdot K_2/K_1$ is the same as that of $K_2/K$.\\

\paragraph{Claim 3:} \emph{$\Gal(K_1(\varphi_2[t])/K_1)=\SL_2(\Fq)=\Gal(K(\varphi_2[t])/K(C[t]))$.}

\medskip

As $\Gal(K(\varphi_2[t])/K)\supseteq \SL_2(\Fq)$, and the restriction $\Gal(K(\varphi_2[t])/K)\to \Gal(K(C[t])/K)$ corresponds to the determinant homomorphism, we have $\Gal(K(\varphi_2[t])/K(C[t]))=\SL_2(\Fq)$.

Therefore $\Gal(K_1(\varphi_2[t])/K_1)\cong \Gal(K(\varphi_2[t])/K_1\cap K(\varphi_2[t]))$ is a subgroup of $\SL_2(\Fq)$, and even a normal subgroup, since $K_1\cap K(\varphi_2[t])$ is Galois over $K$. By Claim 2, $\Gal(K_1(\varphi_2[t])/K_1)$ contains all upper triangular matrices. As there is no strict normal subgroup of $\SL_2(\Fq)$ containing the upper triangular matrices, we have $\Gal(K_1(\varphi_2[t])/K_1)=\SL_2(\Fq)$.\\

\paragraph{Claim 4:} \emph{$\Gal(K_1\cdot K_2, K_1)=\SL_2(\Fq\ps{t})=\Gal(K_2/K_C)$.} 

\medskip

By construction, both Galois groups are contained in $\SL_2(\Fq\ps{t})$, and as in Claim 3, $\Gal(K_1\cdot K_2, K_1)$ is a normal subgroup of $\Gal(K_2/K_C)$. So it suffices to show that $\Gal(K_1\cdot K_2, K_1)=\SL_2(\Fq\ps{t})$.
By Claim 2, $\Gal(K_1\cdot K_2, K_1)$ contains the subgroup $I$ of upper triangular matrices, and by Claim 3, the composition $\Gal(K_1\cdot K_2, K_1)\to \SL_2(\Fq\ps{t})\twoheadrightarrow \SL_2(\Fq)$ is surjective.
Let $G$ be the image of $\Gal(K_1\cdot K_2, K_1)$ in $\SL_2(\Fq\ps{t})$, and denote for $n\geq 1$,
\[ \pi_n: \SL_2(\Fq\ps{t})\longrightarrow \SL_2(\Fq[t]/(t^n)),\]
as well as 
\[\Gamma_n:=\Ker(\pi_n). \]
We will show by induction that $\pi_n(G)=\SL_2(\Fq[t]/(t^n))$ for all $n$, and hence $G=\SL_2(\Fq\ps{t})$.

The case $n=1$ was given by Claim 3. So assume that $\pi_n(G)=\SL_2(\Fq[t]/(t^n))$ for some $n$.

By definition, we have a short exact sequence of groups
\[ 1 \to \pi_{n+1}(\Gamma_n) \to \SL_2(\Fq[t]/(t^{n+1}) \to \SL_2(\Fq[t]/(t^n)) \to 1, \]
where
\[ \pi_{n+1}(\Gamma_n)=\{ 1+t^nM\in \SL_2(\Fq[t]/(t^{n+1}) \mid M\in \Mat_{2\times 2}(\Fq) \} \cong \{ M\in \Mat_{2\times 2}(\Fq) \mid \tr(M)=0\}.\]
As this is a commutative group, an element $D\in \SL_2(\Fq[t]/(t^n))$ acts on $\pi_{n+1}(\Gamma_n)$ via conjugation by an arbitrary representative in $\SL_2(\Fq[t]/(t^{n+1})$ which turns out to be just the conjugation operation of $\pi_1(D)$ on $H:=\{ M\in \Mat_{2\times 2}(\Fq) \mid \tr(M)=0\}$.

As $\pi_1(G)=\SL_2(\Fq)$, the intersection $\pi_{n+1}(\Gamma_n)\cap \pi_{n+1}(G)$ is therefore a subgroup of $H$ stable under conjugation by $\SL_2(\Fq)$.
Further by Claim 2, it contains the subgroup of strict upper triangular matrices $\begin{pmatrix}
0 & b \\ 0 & 0
\end{pmatrix}$.

An easy calculation shows that this implies $\pi_{n+1}(\Gamma_n)\cap \pi_{n+1}(G)=H$. Combining this with $\pi_n(G)=\SL_2(\Fq[t]/(t^n))$ shows 
$\pi_{n+1}(G)=\SL_2(\Fq[t]/(t^{n+1}))$.
\end{proof}

From the calculations in the previous proof, we get a sufficient criterion for a surjective $t$-adic Galois representation.

\begin{Corollary}\label{cor:surjective-Galois-representation}
Let $(E_2, \varphi_2)$ be a rank $2$ Drinfeld module over  $K=\Fq(\theta)$ given by $\varphi_2(t)=\theta+b\tau-\tau^2$ with $b\in K$ such that the following conditions hold:
\begin{itemize}
\item There is a prime $\ell\subseteq \Fq[\theta]$, $\ell\neq (\theta)$ such that $v_\ell(b)<0$, and $\operatorname{gcd}(q, v_\ell(b))=1$,
\item $\Gal(K(\varphi_2[t])/K)\supseteq \SL_2(\Fq)$,
\end{itemize}
Then the $t$-adic Galois representation associated to $E_2$ is surjective, i.e.~$\Gal(K_2/K)\cong \GL_2(\Fq\ps{t})$.
\end{Corollary}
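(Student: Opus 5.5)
The plan is to reuse Claims 1, 3 and 4 from the proof of the preceding theorem, with $K_1$ replaced by $K$ itself, and then to recover the determinant part from the Carlitz module. Note that Claims 1, 3 and 4 never used $\varphi_1$ except through Claim 2. Claim 2 only served to transfer the inertia information from $K_2/K$ to $K_1\cdot K_2/K_1$. If we take ``$K_1=K$'' (formally, the trivial extension), Claim 2 reduces to the statement that $\Gal(K_2/K)$ contains the inertia group $I_\ell$. By Claim 1, $I_\ell$ contains the unipotent group $I$ with respect to a suitable $\Fq\ps{t}$-basis of $\varphi_2[t^\infty]$. Claim 1 uses exactly the hypotheses $v_\ell(b)<0$, $\gcd(q,v_\ell(b))=1$ and $\ell\neq(\theta)$, via \cite[Prop.~4.4]{dz:dmmgatp} applied to $j(\varphi_2)=-b^{q+1}$.

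Next, I would run the argument of Claims 3 and 4 for $G:=\varrho_{E_2,t}(\Gal(K_2/K_C))\subseteq \SL_2(\Fq\ps{t})$. The inclusion into $\SL_2$ holds because, by the normalization, $\det\circ\varrho_{E_2,t}=\varrho_{C,t}$. First, $\pi_1(G)=\Gal(K(\varphi_2[t])/K(C[t]))=\SL_2(\Fq)$ by the second hypothesis together with the determinant compatibility. Second, $I\subseteq G$, since inertia at $\ell$ lies in $\Gal(K_2/K_C)$. Here the point is that $K_C/K$ is unramified at $\ell\neq(\theta)$, because the Carlitz module has good reduction there; alternatively, one can simply note that $I\subseteq\SL_2$. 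The induction on $n$ from Claim 4 then gives $\pi_{n+1}(G)=\SL_2(\Fq[t]/(t^{n+1}))$. The key ingredient is the observation that the $\SL_2(\Fq)$-stable subgroup of the trace-zero matrices $H$ generated by the strict upper triangular matrices is all of $H$. Since $G$ is closed, it follows that $G=\SL_2(\Fq\ps{t})$.

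Finally, I would combine this with the surjectivity of the Carlitz representation, $\varrho_{C,t}(\Gal(K_C/K))=\Fq\ps{t}^\times$ (\cite[Proposition 12.7]{mr:ntff}). For $g\in\GL_2(\Fq\ps{t})$, choose $\sigma\in\Gal(K_2/K)$ with $\det\varrho_{E_2,t}(\sigma)=\det g$. Then $\varrho_{E_2,t}(\sigma)^{-1}g$ lies in $\SL_2(\Fq\ps{t})=G$, so $g$ is in the image. This shows $\Gal(K_2/K)\cong\GL_2(\Fq\ps{t})$.

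I expect the main obstacle to be the $\SL_2(\Fq)$-stability calculation on $H$ in small characteristic, especially $q=2$, where the trace-zero matrices contain the scalars and conjugation behaves differently. This is the same ``easy calculation'' as in Claim 4. I would verify it by conjugating $\left(\begin{smallmatrix}0&1\\0&0\end{smallmatrix}\right)$ by lower-unipotent matrices and taking sums. If this fails for $q=2$, I would restrict the statement to $q>2$ or argue separately in that case.
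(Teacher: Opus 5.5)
Your proposal follows the paper's proof. The paper also takes $\Gal(K_2/K_C)\cong\SL_2(\Fq\ps{t})$ from Claim 4, where $\varphi_1$ plays no role beyond Claim 2 (one may take $a=0$). It then applies the short five lemma to the determinant sequence, and your lifting argument via the surjective Carlitz character is the explicit form of that step. The correct substitute for $K_1$ is $K_C$, not $K$, since Claim 3 uses $K_1\supseteq K_C$; your choice of $G$ already does this implicitly.

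\textbf{A mis-justified step.} You assert $\pi_1(G)=\Gal(K(\varphi_2[t])/K(C[t]))$. In fact $\pi_1(G)=\Gal(K_C(\varphi_2[t])/K_C)\cong\Gal(K(\varphi_2[t])/F)$ with $F:=K(\varphi_2[t])\cap K_C$. The second hypothesis together with $\det\circ\varrho_{E_2,t}=\varrho_{C,t}$ only gives $\Gal(K(\varphi_2[t])/K(C[t]))=\SL_2(\Fq)$. The equality $F=K(C[t])$ is not automatic: for $q=2$, $\SL_2(\FF_2)\cong S_3$ has a quotient $\ZZ/2$ that could a priori be cut out by a subfield of the pro-$2$ extension $K_C/K(C[t])$.

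\textbf{The fix.} Use exactly the argument of Claim 3, with your two points taken in the opposite order. $\pi_1(G)$ is contained in $\SL_2(\Fq)$. It is normal in $\Gal(K(\varphi_2[t])/K)\supseteq\SL_2(\Fq)$ because $F/K$ is Galois. It contains the upper unipotent matrices because $I\subseteq G$. Conjugating by $\bigl(\begin{smallmatrix}0&1\\-1&0\end{smallmatrix}\bigr)$ gives the lower unipotents, and upper and lower unipotents together generate $\SL_2(\Fq)$.

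\textbf{The case $q=2$ in the induction step.} Your worry here is unfounded. Write $E_{12},E_{21}$ for the matrix units.
\begin{itemize}
\item Conjugating $\beta E_{12}$ by $\bigl(\begin{smallmatrix}1&0\\c&1\end{smallmatrix}\bigr)$ gives $\beta\bigl(\begin{smallmatrix}-c&1\\-c^2&c\end{smallmatrix}\bigr)$.
\item Conjugating $\beta E_{12}$ by $\bigl(\begin{smallmatrix}0&1\\-1&0\end{smallmatrix}\bigr)$ gives $-\beta E_{21}$.
\item Combining these yields $\beta c\,\mathrm{diag}(-1,1)$.
\end{itemize}
So the conjugation-stable additive subgroup containing $\Fq E_{12}$ is $\Fq E_{12}+\Fq E_{21}+\Fq\,\mathrm{diag}(1,-1)=H$ for every $q$. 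For $q=2$, $\mathrm{diag}(1,-1)$ is the identity, which indeed has trace $0$. No restriction on $q$ is needed there; the genuine $q=2$ subtlety sits in the base case $\pi_1(G)=\SL_2(\Fq)$ discussed above.
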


\begin{proof}
    We showed above (Claim 4) that $\Gal(K_2/K_C)\hookrightarrow \SL_2(\Fq\ps{t})$ is an isomorphism. So we have the following commutative diagram with short exact rows where the first and last vertical arrows are isomorphisms:
\[\xymatrix{
1\ar[r] & \Gal(K_2/K_C) \ar[r] \ar[d]^{\cong} & \Gal(K_2/K) \ar[r]\ar@{^{(}->}[d] & \Gal(K_C/K) \ar[r] \ar[d]^{\cong} & 1 \\
1\ar[r] &  \SL_2(\Fq\ps{t})\ar[r] & \GL_2(\Fq\ps{t}) \ar[r]^{\det} & \GG_m(\Fq\ps{t}) \ar[r] & 1. 
}\]
By the short 5-lemma, the middle vertical arrow also has to be an isomorphism.
\end{proof}

One way to obtain $\Gal(K(\varphi_2[t])/K)\supseteq \SL_2(\Fq)$ with the given assumptions on $b$ is given in the following proposition.

\begin{Proposition}
Let $(E_2, \varphi_2)$ be a rank $2$ Drinfeld module over  $K=\Fq(\theta)$ given by $\varphi_2(t)=\theta+b\tau-\tau^2$ with $b\in K$, and let $\ell\subseteq \Fq[\theta]$ be a prime ideal such that the following conditions hold:
\begin{enumerate}[(i)]
\item the polynomial $x^{q-1}+\theta$ is irreducible modulo $\ell$ (i.e.,~in $\FF_\ell[x]=(\Fq[\theta]/\ell)[x]$),
\item $v_\ell(b)<0$ is divisible by $q-1$, and $\operatorname{gcd}(q, v_\ell(b))=1$,
\item $-b_0^{-1}\theta$ is a $(q-1)$-st power modulo $\ell$ (i.e.,~in the field $\FF_\ell=\Fq[\theta]/\ell$).
\item $v_\infty(b)\geq 0$.
\end{enumerate}
Here, for a uniformizer $u_\ell$ at the prime $\ell$, $b_0$ is the leading coefficient of the $\ell$-adic expansion $b=b_0u_\ell^{v_\ell(b)}+b_1u_\ell^{v_\ell(b)+1}+\ldots$.\footnote{Although at first sight, it seems that condition (iii) depends on the choice of the uniformizer $u_\ell$, the assumption that $v_\ell(b)$ is divisible by $q-1$ in condition (ii) ensures that it does not depend on that choice.}

Then, $\Gal(K(\varphi_2[t])/K)=\GL_2(\Fq)$,
\end{Proposition}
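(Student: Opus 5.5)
The plan is to show that the image $G\subseteq\GL_2(\Fq)$ of $\Gal(K(\varphi_2[t])/K)$ is all of $\GL_2(\Fq)$. Three ingredients are needed: the determinant is surjective, there is an element of order $p$ coming from inertia at $\ell$, and there is an element of order $q^2-1$ coming from the place $(\theta)$ (or from $\infty$). A subgroup of $\GL_2(\Fq)$ with these properties contains $\SL_2(\Fq)$, and surjectivity of the determinant then gives $\GL_2(\Fq)$. Recall that $\varphi_2[t]$ consists of the roots of $\theta x+bx^q-x^{q^2}$, i.e.\ of $x$ and the roots of $x^{q^2-1}-bx^{q-1}-\theta$, which is a polynomial in $y=x^{q-1}$: $y^{q+1}-by-\theta$.

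For the determinant, we use $\wedge^2\mot_{E_2}\cong\mot_{E_C}$ (the top coefficient is $-1=(-1)^{2-1}$). Under this identification $\det\circ\varrho$ is the Carlitz character mod $t$, whose field is $K(C[t])=K(\sqrt[q-1]{-\theta})$ with group $\Fq^\times$. So $\det(G)=\Fq^\times$.

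For inertia at $\ell$, we proceed as in Claim 1 of the preceding theorem. Condition (ii) gives $(q-1)\mid v_\ell(b)$, so no auxiliary ramified extension is needed: $\varphi_2$ over $K_\ell$ has stable bad (Tate-type) reduction of rank $1$, and $p\nmid v_\ell(j)=(q+1)v_\ell(b)$. Then \cite[Prop.~4.4]{dz:dmmgatp} yields inertia containing the full unipotent group $\left\{\begin{pmatrix}1&\beta\\0&1\end{pmatrix}:\beta\in\Fq\right\}$ on $\varphi_2[t]$. Condition (iii) is what makes this work concretely: on the Newton polygon of $y^{q+1}-by-\theta$ at $\ell$, $q$ roots have valuation $v_\ell(b)/q$, and the remaining root $y_0\approx-\theta/b$ is a unit times $u_\ell^{-v_\ell(b)}$. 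Condition (iii) says $-\theta/b_0$ is a $(q-1)$-st power mod $\ell$, so the corresponding $q-1$ values of $x$ are unramified, spanning an $\Fq$-line $W$ fixed by inertia up to a character. The other roots have valuation $v_\ell(b)/(q(q-1))$, which has denominator divisible by $p$ by (ii), so wild ramification occurs. The inertia image thus lies in the Borel fixing $W$ and is not of order prime to $p$. Condition (i) is needed to see that the diagonal part of inertia acts trivially, since $\theta$ is a unit at $\ell$ and $C[t]$ is unramified there. This gives a nontrivial unipotent element, and hence, after conjugating by the diagonal Frobenius elements, a full root subgroup.

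For the nonsplit torus, we use condition (i): $x^{q-1}+\theta$ is irreducible mod $\ell$, so the residue field $\FF_\ell$ does not contain $\sqrt[q-1]{-\theta}$. We look at decomposition at $\infty$ using condition (iv). Since $v_\infty(b)\ge0$ and $v_\infty(\theta)=-1$, the Newton polygon of $y^{q+1}-by-\theta$ at $\infty$ has a single slope, with all roots of valuation $-1/(q+1)$. So $y$ is totally tamely ramified of degree $q+1$, and $x=y^{1/(q-1)}$ gives inertia of order divisible by $(q+1)(q-1)/\gcd=q^2-1$ on $\varphi_2[t]\setminus 0$. Tame inertia is cyclic, and a cyclic subgroup of $\GL_2(\Fq)$ acting freely and transitively on the $q^2-1$ nonzero vectors is a Singer cycle, i.e.\ a nonsplit Cartan $\FF_{q^2}^\times$. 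Condition (i) is then not needed at this step.

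Finally comes the group theory. $G$ contains a Singer cycle $S$ and a transvection $u$. By Dickson's classification of subgroups of $\GL_2(\Fq)$, or by direct counting, $G$ contains a nontrivial $p$-element, so its image in $\mathrm{PGL}_2(\Fq)$ is not contained in the normalizer of a torus. It also does not fix a line, because $S$ acts irreducibly. Therefore $G\supseteq\SL_2(\Fq)$. With $\det(G)=\Fq^\times$ this gives $G=\GL_2(\Fq)$.

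I expect the main obstacle to be the inertia analysis at $\ell$. Specifically, it is pinning down that Zywina's hypotheses hold directly over $K_\ell$ and checking exactly how (i) and (iii) enter. I would first try the explicit Newton-polygon and ramification computation above, using (iii) to show the "Tate period" line is rational and (i) to guarantee that the wild part is genuinely present rather than absorbed. If this route proves too delicate, I would fall back on Zywina's Tate-uniformization argument.
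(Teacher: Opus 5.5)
\paragraph{The gap: your final group-theoretic step.}
The inference ``$G$ contains a nontrivial $p$-element, so its image in $\mathrm{PGL}_2(\Fq)$ is not contained in the normalizer of a torus'' is false for $p=2$. There the normalizer of the nonsplit torus is dihedral of order $2(q+1)$, and its involutions are transvections.

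Concretely, for $q=2^k\geq 4$ let $N=\FF_{q^2}^\times\rtimes\langle\sigma\rangle\subset\GL_2(\Fq)$, with $\FF_{q^2}^\times$ acting by multiplication on $\FF_{q^2}\cong\Fq^2$ and $\sigma(x)=x^q$. Then:
\begin{itemize}
\item[(a)] $N$ contains a Singer cycle;
\item[(b)] $\sigma$ is a transvection, since $(\sigma-1)^2=\sigma^2-1=0$ in characteristic $2$ and $\sigma\neq 1$;
\item[(c)] $\det(N)=\Fq^\times$, because the norm is surjective;
\item[(d)] yet $|N|=2(q^2-1)<|\GL_2(\Fq)|$.
\end{itemize}
So your three ingredients (surjective determinant, one transvection, a Singer cycle) do not force $G=\GL_2(\Fq)$. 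Your appeal to Dickson also skips the subfield and exceptional subgroups. Those are excluded because the image of $S$ in $\mathrm{PGL}_2(\Fq)$ has order $q+1$, but the dihedral case above genuinely survives.

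You do assert a full root group earlier, justified by ``conjugating by diagonal Frobenius elements''. That step requires the $\gamma$-entries of the decomposition group at $\ell$ to generate $\Fq$ additively, and this is exactly where (i) enters. It does not enter where you place it: triviality of $\det$ on inertia at $\ell$ only needs $K(C[t])/K$ to be unramified at $\ell$. In any case, your final step never uses the full root group.

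\paragraph{A repair within your framework.}
The long roots have valuation $v_\ell(b)/(q(q-1))=k/q$ with $k=v_\ell(b)/(q-1)$ prime to $p$. So the denominator is exactly $q$, not merely divisible by $p$, and $q$ divides the order of inertia at $\ell$. Inertia stabilizes the short-root line $W$, so it lies in the Borel subgroup $B_W$. The unique Sylow $p$-subgroup of $B_W$ is the root group $U_W$ of order $q$, hence $U_W\subseteq G$. Choose $g\in G$ with $gW\neq W$, which exists by transitivity from $\infty$. Then $U_W$ and $gU_Wg^{-1}=U_{gW}$ generate $\SL_2(\Fq)$, and surjectivity of $\det$ finishes. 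This needs neither Dickson nor the Singer cycle, and it does not use (i) or (iii).

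\paragraph{How the paper argues.}
The paper avoids any subgroup classification and instead counts.
\begin{itemize}
\item[(1)] Condition (iii) puts the short root $\zeta$ in $K_\ell$.
\item[(2)] Wild ramification gives all $\beta$.
\item[(3)] Condition (i) gives all $\gamma\in\Fq^\times$: there is no $(q-1)$-st root of $-\theta$ in $K_\ell$, so $(\zeta x^q-\zeta^q x)^{q-1}+\theta$ cannot split over $K_\ell$.
\end{itemize}
Hence the decomposition group at $\ell$ is $\bigl\{\begin{pmatrix}1&\beta\\0&\gamma\end{pmatrix}\bigr\}$, of order $q(q-1)$ and fixing $\zeta$. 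Combined with transitivity on the $q^2-1$ nonzero $t$-torsion points (the same Newton-polygon argument at $\infty$ you give), orbit--stabilizer yields $|G|\geq(q^2-1)(q^2-q)=|\GL_2(\Fq)|$.
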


\begin{proof}
At the infinite place, the Newton polygon of the polynomial $f(x)=x^{q^2}-bx^q-\theta x$ -- whose roots are the elements in $\varphi_2[t]$ --  has one edge of slope $\frac{1}{q^2-1}$ due to the condition $v_\infty(b)\geq 0$. Therefore, $f(x)/x=x^{q^2-1}-bx^{q-1}-\theta$ is irreducible over $K$. In particular, $\Gal(K(\varphi_2[t])/K)$ acts transitively on $\varphi_2[t]\setminus \{0\}$.

We claim that the decomposition group $G_\ell$ of $K(\varphi_2[t])/K$ at a prime above $\ell$ equals
\[ \left\{ \begin{pmatrix} 1 & \beta \\ 0 & \gamma \end{pmatrix} \middle| \gamma\in \Fq^\times, \beta\in \Fq \right\} \]
with respect to an appropriate $\Fq$-basis $\{\zeta_1, \zeta_2\}$ of $\varphi_2[t]$.

Then the stabilizer of the first basis vector $\zeta_1$ has order $q^2-q$. Therefore, the order of the Galois group is at least $(q^2-1)\cdot (q^2-q)$ which is exactly the order of $\GL_2(\Fq)$.
Hence, $\Gal(K(\varphi_2[t])/K)\cong \GL_2(\Fq)$.    

\medskip

It remains to show that 
\[ G_\ell = \left\{ \begin{pmatrix} 1 & \beta \\ 0 & \gamma \end{pmatrix} \middle| \gamma\in \Fq^\times, \beta\in \Fq \right\}. \]

The extension $K(\varphi_2[t])/K$ is ramified at $\ell$, as the Newton polygon\footnote{Now we consider the Newton polygon with respect to the $\ell$-adic valuation $v_\ell$.} of the polynomial $f(x)=x^{q^2}-bx^q-\theta x\,$ has slopes $-\frac{v_\ell(b)}{q^2-q}\in \frac{1}{q}\ZZ\setminus \ZZ$ and $\frac{v_\ell(b)}{q-1}\in \ZZ$, by assumption on $b$. Hence, with respect to the $\Fq$-basis of $\varphi_2[t]$ given by a ``short'' root $\zeta$ with valuation $-\frac{v_\ell(b)}{q-1}>0$ and a ``long'' root with valuation $\frac{v_\ell(b)}{q^2-q}<0$, we have
\[ G_\ell \subseteq \left\{ \begin{pmatrix} \alpha & \beta \\ 0 & \gamma \end{pmatrix} \middle| \alpha,\gamma\in \Fq^\times, \beta\in \Fq \right\}, \]
since the decomposition group preserves valuations.

Let $v=-\frac{v_\ell(b)}{q-1}$, and $s=u_\ell^v$, so that one has $v_\ell(s^{-1}\zeta)=0$. By considering the polynomial $s^{-1}f(sx)$ modulo $\ell$, one computes that 
\[  \prod_{\mu\in \Fq} (x-\mu s^{-1}\zeta) = x^q-(s^{-1}\zeta)^{q-1}x \equiv x^q - (-b_0^{-1}\theta)x \mod{\ell}. \]
By assumption (iii), there is $c\in \FF_\ell$ with \[ c^{q-1}\equiv -b_0^{-1}\theta \mod{\ell},\]
i.e., $x^q-(-b_0^{-1}\theta)x$ factors completely modulo $\ell$. So  by Hensel's lemma the polynomial factors completely over $K_\ell$ which implies that $\zeta\in K_\ell$. Hence indeed,
\[ G_\ell \subseteq \left\{ \begin{pmatrix} 1 & \beta \\ 0 & \gamma \end{pmatrix} \middle| \gamma\in \Fq^\times, \beta\in \Fq \right\}. \]

A short calculation in $K_\ell[x]$ shows that
\[ f(x)/(x^q-\zeta^{q-1} x) = \left(x^q-\zeta^{q-1}x\right)^{q-1} +\theta \zeta^{1-q} = \zeta^{1-q}\left( \left( \zeta x^q-\zeta^q x \right)^{q-1}   +\theta \right).  \]
If the latter polynomial would factor, it would factor into $q-1$ factors of order $q$ due to the slope of its Newton polygon, i.e., as
\[ \zeta^{1-q}\left( \left( \zeta x^q-\zeta^q x \right)^{q-1}   +\theta \right) =\zeta^{1-q} \prod_{\mu\in \Fq^\times} \left( \zeta x^q-\zeta^q x -\mu \eta_\theta \right),    \]
where $\eta_\theta$ is a $(q-1)$-st root of $(-\theta)$ in $K_\ell$.
By assumption (i), such a root does not exist modulo $\ell$, and hence not in $K_\ell$.

Hence, indeed
\begin{equation*}
    G_\ell = \left\{ \begin{pmatrix} 1 & \beta \\ 0 & \gamma \end{pmatrix} \middle| \gamma\in \Fq^\times, \beta\in \Fq \right\}. \qedhere
\end{equation*} 
\end{proof}

\begin{Remark}
    For similar but unrelated constructions as we have done in this section, see e.g. \cite{ar:tgrspddm, dz:dmmgatp,  nk-ds:ostgrdmr, ld-jf:grptdmgc}.
    
\end{Remark}

\def\cprime{$'$}

\end{document}